\def\disp{\displaystyle}
\def\XX{\mathbb{X}}
\def\YY{\mathbb{Y}}
\def\Limsup{\mathop{{\rm Lim}\,{\rm sup}}}
\def\Diag{\mbox {\rm Diag}\,}
\def\tto{\rightrightarrows}
\def\Hat{\widehat}
\def\hat{\widehat}
\def\Tilde{\widetilde}
\def\Bar{\overline}
\def\ra{\rangle}
\def\la{\langle}
\def\ve{\varepsilon}
\def\B{\mathbb{B}}
\def\h{\hfill\Box}
\def\R{\mathbb{R}}
\def\ox{\bar{x}}
\def\OX{\Bar{X}}
\def\oy{\bar{y}}
\def\OY{\Bar{Y}}
\def\ov{\bar{v}}
\def\OV{\Bar{V}}
\def\OU{\Bar{U}}
\def\gph{\mbox{\rm gph}\,}
\def\span{\mbox{\rm span}\,}
\def\dom{\mbox{\rm dom}\,}
\def\Ker{\mbox{\rm Ker}\,}
\def\cl*co{\mbox{\rm cl}^*\mbox{\rm co}\,}
\newcommand{\eqdef}{\stackrel{\text{\rm\tiny def}}{=}}
\def\cl{\mbox{\rm cl}\,}
\def\h{\hfill\triangle}
\def\dn{\downarrow}
\def\ph{\varphi}
\def\st{\stackrel}
\def\oR{\Bar{\R}}
\def\lm{\lambda}
\def\gg{\gamma}
\def\al{\alpha}
\def\Lm{\Lambda}
\def\hs7{\hspace*{7pt}}
\def\Id{\mathbb{I}}
\renewcommand{\theequation}{\thesection.\arabic{equation}}
\def\h{\hfill\Box}
\def\kk{\kappa}
\begin{document}

\newtheorem{Theorem}{Theorem}[section]
\newtheorem{Conjecture}[Theorem]{Conjecture}
\newtheorem{Proposition}[Theorem]{Proposition}
\newtheorem{Remark}[Theorem]{Remark}
\newtheorem{Lemma}[Theorem]{Lemma}
\newtheorem{Corollary}[Theorem]{Corollary}
\newtheorem{Definition}[Theorem]{Definition}
\newtheorem{Example}[Theorem]{Example}
\newtheorem{Fact}[Theorem]{Fact}
\newtheorem*{pf}{Proof}
\renewcommand{\theequation}{\thesection.\arabic{equation}}
\normalsize
\normalfont
\medskip
\def\endproof{$\h$\vspace*{0.1in}}

\title{\bf Geometric characterizations of Lipschitz stability for convex optimization problems}
\date{}
\author{TRAN T. A. NGHIA\footnote{Department of Mathematics and Statistics, Oakland University, Rochester, MI 48309, USA; email: nttran@oakland.edu} }

\maketitle
\vspace{-0.2in}

{\small \noindent {\bf Abstract.} In this paper, we mainly study tilt stability and Lipschitz stability of convex optimization problems. Our characterizations are geometric and fully computable in many important cases. As a result, we apply our theory to the group Lasso problem and the nuclear norm minimization problem and reveal that the Lipschitz stability of the solution mapping in these problems is automatic whenever the solution mapping is single-valued.
}
\vspace{0.1in}

\noindent{\bf Keywords.} Lipschitz stability, tilt stability, convex optimization, variational analysis and nonsmooth optimization, second-order analysis, group Lasso problems, nuclear norm minimization problems.
\vspace{0.1in}

\noindent{\bf Mathematics Subject Classification} (2020). 49J52, 49J53, 49K40, 90C25, 90C31

\section{Introduction}
\setcounter{equation}{0}
Recovering  a signal $x_0\in \XX$ from a noised observation $b\approx \Phi x_0$ with $\Phi:\XX\to \YY$ being a linear operator between two Euclidean spaces is an important problem in many different areas of science and engineering. One of the most popular methods for recovering  approximate solutions of $x_0$ is to solve the following regularized optimization  problem 
\begin{equation}\label{CP}
    \min_{x\in \XX}\qquad \frac{1}{2\mu}\|\Phi x-b\|^2+ g(x),
\end{equation}
where $\mu>0$ is a regularization (tuning) parameter and $g:\XX\to \R$ is a convex function (regularizer). The choice of  regularizer $g$ usually depends on desirable properties of the original signal $x_0$. As $b$ is  also considered a parameter, it is natural to question stability and sensitivity  of the solution mapping of problem~\eqref{CP} with the two parameters $b$ and $\mu$ \cite{MY12,T13,VDFPD17,BLPS21,BBH22,BPS22}. In this paper, we mainly study the Lipschitz stability of the solution mapping  $S:\YY\times \R_{++}\to \XX$ 
\begin{equation}\label{Sp}
S(b,\mu)\eqdef{\rm argmin}\,\left\{\frac{1}{2\mu}\|\Phi x-b\|^2+ g(x)|\; x\in \XX\right\}.
\end{equation}
Lipschitz stability of this mapping was studied recently in \cite{BBH22} for a particular class of $g$ as the $\ell_1$ norm. Since the $\ell_1$ norm is a polyhedral function, \cite{BBH22} is able to employ some known results on Lipschitz stability \cite{M1,RW98} and available second-order computations on the $\ell_1$ norm to obtain sufficient conditions for Lipschitz stability of  $S$ and derive explicit formula for its Lipschitz modulus. However, these second-order computations could be very complicated for nonpolyhedral regularizers such as the {\em nuclear norm}. In the recent papers \cite{BLPS21,BPS22}, sensitivity analysis of solution mapping~\eqref{Sp} was studied  via the so-called {\em conservative Jacobian}, which also needs $S$ to be Lipschitz continuous at first.  We approach Lipschitz stability differently via  the {\em generalized implicit function theorem} \cite{R80,DR09} and {\em tilt stability} \cite{PR98} together with  the  recent geometric characterizations  of {\em strong minima} in \cite{FNP23} without computing complex second-order structures on $g$.  Our ideas are summarized as follows.

The solution mapping \eqref{Sp} is viewed as a {\em generalized equation} \cite{R80} 
\begin{equation}\label{GE}
S(b,\mu)=\left\{x\in \XX|\; 0\in \frac{1}{\mu}\Phi^*(\Phi x-b)+\partial g(x)\right\},
\end{equation}
where $\partial g$ is the {\em convex subdifferential mapping} of $g$.
It is well-known from the notable Robinson implicit function theorem \cite{R80,DR09} that $S$ has a {\em Lipschitz continuous single-valued localization} around $(\bar b,\bar \mu)\in \YY\times \R_{++}$ for $\ox\in S(\bar b,\bar \mu)$ if the following mapping
\begin{equation}\label{Lin}
\Hat S(v)\eqdef \left\{x\in \XX|\; v\in \frac{1}{\bar \mu} \Phi^*(\Phi x-\bar b)+\partial g(x)\right\} \quad \mbox{for}\quad v\in \XX^*,
\end{equation}
has a Lipschitz continuous single-valued localization around $0$ for  $\ox$. This idea dates back to the original work of Robinson \cite{R80} similar to the connection between the classical implicit function theorem and the classical inverse function theorem. Studying Lipschitz stability of  $\Hat S$ in \eqref{Lin} is easier than that of $S$ in \eqref{GE}. One important observation in our study is that Lipschitz stability of $\Hat S$ around $0$ for $\ox$ is equivalent to tilt stability at  $\ox$ of the below problem 
\begin{equation}\label{CP2}
    \min_{x\in \XX}\qquad \frac{1}{2\bar\mu}\|\Phi x-\bar b\|^2+ g(x).
\end{equation}
Tilt stability occurs when  the solution mapping of an optimization problem is Lipschitzian stable as it is shifted by a small tilt (linear) perturbation. Indeed, $\Hat S$ in \eqref{Lin} is the solution mapping of the following  problem
\begin{equation}\label{ST}
    \min_{x\in \XX}\qquad \frac{1}{2\bar\mu}\|\Phi x-\bar b\|^2+ g(x)-\la v,x\ra\quad \mbox{for}\quad v\in \XX^*.
\end{equation}
 Tilt stability was introduced in the landmark paper of Poliquin and Rockafellar \cite{PR98} and developed in different settings \cite{BGM19,CHN18,DMN14, DI15, G24, GO22, GM15, LZ13, OM23, MR12, MN15}. The original result in \cite[Theorem~1.3]{PR98} shows that tilt stability is equivalent to the positive definiteness of the {\em generalized Hessian/second-order limiting subdifferential} of the cost function defined in \cite{M92}. For example, when $g(x)$ is the $\ell_1$-norm, by using the explicit formula of the generalized Hessian of  $\ell_1$ norm in $\R^n$, e.g., \cite[Proposition~8.1]{KMP22}, one can  obtain a full characterization of tilt stability for problem \eqref{CP2}, which turns out to be  equivalent to Tibshirani's  condition \cite[Lemma~1]{T13} for  the corresponding $\ell_1$ optimization problem (a.k.a. Lasso). Consequently,  Tibshirani's condition is shown to be sufficient for Lipschitz stability of solution mapping \eqref{Sp} of Lasso problem as  in  \cite{BBH22} via a different approach. One of the main disadvantages of using  the generalized Hessian is its hard computation in many nonpolyhedral structures. Lots of research papers tried to overcome that obstacle by different ways \cite{AG08,CHN18,DL13,DMN14,GO22,OM23,MN15}.  In this paper, we obtain  a new characterization of tilt stability via {\em second subderivatives} \cite{BS00,RW98}, which seems to be the simplest second order structure for nonsmooth functions.  Our approach relies on the primal-dual characterization of tilt stability \cite[Theorem~3.3]{CHN18} that is also used in \cite{BGM19} to establish explicit conditions for tilt stability in (nonpolyhedral) {\em second-order cone programming} problems.

A significant class of problem \eqref{CP} is the {\em nuclear norm minimization problem}:     
\begin{equation}\label{NNM0}
    \min_{X\in \R^{n_1\times n_2}}\quad \frac{1}{2\mu}\|\Phi X-B\|^2+ \|X\|_*,
\end{equation}
where $\Phi:\R^{n_1\times n_2}\to \R^m$ is a linear operator, $B\in \R^m$, $\mu>0$, and $\|X\|_*$ is the nuclear norm of matrix $X$ being the sum of all its singular values. This problem has high impact in several important research areas such as {\em matrix completion},  {\em multivariate regression}, {\em matrix compressed sensing} \cite{CRPW12,CP10,CR09,W19}.  Unlike the $\ell_1$ norm, the generalized Hessian  of the nuclear norm is very hard to calculate. One of our main motivations  is to characterize tilt stability of problem \eqref{NNM0} and Lipschitz stability of the  solution mapping \eqref{Sp}  without computing the generalized Hessian.

{\bf Our contributions.} As discussed above, tilt stability plays the central role in our paper. More specifically, we study tilt stability of the following problem 
\begin{equation}\label{CPS}
\min_{x\in \mathbb{X}}\quad f(x)+g(x),
\end{equation}
where $f:\XX\to \R$ is a twice continuously differentiable convex function and $g:\XX\to \oR$ is a proper l.s.c. convex function. This is the general model of problem ~\eqref{CP2} when the quadratic loss function is replaced by the function  $f$. Characterization of tilt stability for this problem is important for its own sake due to recent developments of employing  tilt stability to design nonsmooth Newton's method to solve many major optimization problems in the format of \eqref{CPS}; see, e.g., \cite{KMP22,G24,GO21,GO22,MS21}.    One of our main results is to establish a new geometric characterization of tilt stability with two additional conditions on the function $g$: (a) $g$ satisfies the so-called {\em quadratic growth condition for  a given set $\mathcal{M}\subset \XX^*$} and (b) $\partial g$ has {\em relative approximations}  by $\mathcal{M}$. {\em Convex piecewise linear-quadratic functions} \cite{RW98}, including the $\ell_1$ norm and convex picecewise linear functions satisfy these two conditions for  the whole dual space $\XX^*$ due to polyhedral features in their constructions. We also show that the two conditions are valid in other  substantial  nonpolyhedral regularizers such as the $\ell_1/\ell_2$ norm (a.k.a the group Lasso regularizer \cite{YL06}), the nuclear norm, and the indicator function to the set of positive semi-definite matrices. Constructing the set $\mathcal{M}$ in these cases is not trivial, especially for the nuclear norm. 

Another crucial part of our paper is to investigate the Lipschitz stability of the optimal solution mapping $S$ in \eqref{Sp}. In the case of Lasso problem, the recent paper \cite{BBH22} shows that Tibshirani's condition \cite{T13} is sufficient for the latter property. Our geometric characterization of tilt stability for problem \eqref{CP2} are also sufficient for the  Lipschitz stability of $S$ as discussed above. In this paper, we prove that it is also necessary when the function $g$ is either the $\ell_1/\ell_2$ norm (including the case of $\ell_1$ norm) or the nuclear norm. Moreover, for the last two important regularizers,  we show that if the solution mapping~\eqref{Sp} is single-valued, the Lipschitz continuity follows automatically. This is quite a surprising phenomena, as Lipschitz continuity usually does not occur freely. Consequently, Tibshirani's sufficient condition \cite{T13} is also necessary for single-valueness and Lipschitz stability of solution mapping of the Lasso problem. 


 \section{Preliminaries}
\setcounter{equation}{0}
Throughout this paper we suppose that $\XX$ is an Euclidean finite dimensional space with the norm $\|\cdot\|$. Its dual space $\XX^*$ is endowed by the inner product $\la \cdot,\cdot \ra$. $\B_\ve(x)$ stands for the open ball with radius $\ve>0$ and center $x$. The most important geometric structure used in our paper is the (Bouligand/Severi) {\em tangent/contingent cone} \cite[Definition~6.1]{RW98} to a closed set $\Omega\subset \XX$ at a point $\ox\in \Omega$ defined by
\begin{equation}\label{Tg}
    T_\Omega(\ox)\eqdef\{w\in \XX|\; \exists t_k\dn 0, w_k\to w, \ox+t_k w_k\in \Omega\}=\Limsup_{t\dn 0}\frac{\Omega-\ox}{t}. 
\end{equation} 

In the convex case, the polar cone of the tangent cone is known as the {\em normal cone} to $\Omega$ at $\ox$:
\begin{equation}\label{Nor}
N_\Omega(\ox)\eqdef\left[T_\Omega(\ox)\right]^\circ=\{v\in \XX^*|\; \la v,x-\ox\ra\le 0, \forall x\in \Omega\}.
\end{equation}

Let $\varphi:\XX\to \oR\eqdef\R\cup\{+\infty\}$ be a proper lower semi-continuous (l.s.c.) convex function with nonempty domain $\dom \varphi\eqdef\{x\in \XX|\; \varphi(x)< \infty\}$. The  convex subdifferential of $\varphi$ at $\ox\in \dom \varphi$ is 
\[
\partial \varphi(\ox)\eqdef\{v\in \XX^*|\; \varphi(x)-\varphi(\ox)\ge \la v,x-\ox\ra, \forall x\in \XX\}.
\]
The normal cone $N_\Omega(\ox)$ defined above is indeed the subdifferential of the {\em indicator function} $\delta_\Omega(\cdot)$ at $\ox$, where  $\delta_\Omega(x)$ is equal to $0$ if $x\in \Omega$ and equal to $\infty$ otherwise. 

The Fenchel conjugate of $\varphi$ is the function $\varphi^*:\XX^*\to \oR$ defined by
\begin{equation}\label{Fe}
\varphi^*(v)\eqdef\sup\{\la v,x\ra-\varphi(x)|\; x\in \XX\}\quad \mbox{for}\quad v\in \XX^*. 
\end{equation}
It is well-known from the Fenchel-Young's identity  that 
\begin{equation}\label{FY}
v\in \partial \varphi(\ox)\quad \mbox{if and only if}\quad \varphi(\ox)+\varphi^*(v)=\la v,\ox\ra. 
\end{equation}
 Note further that $\partial \varphi:\XX\tto\XX^*$ is a set-valued mapping whose graph is denoted by 
\[
\gph \partial \varphi\eqdef\{(u,v)\in X\times \XX^*|\; v\in \partial \varphi(u)\}. 
\]
Next, let us consider the following convex optimization problem 
\begin{equation}\label{Phi}
  \min_{x\in \XX}\quad \varphi(x).   
\end{equation}
The point $\ox$ is an optimal solution (minimizer) of this problem if and only if $0\in \partial \varphi(\ox)$. Moreover, it is called  a strong solution if there exist some $\ve>0$ and modulus $\kk>0$ such that 
\[
\varphi(x)-\varphi(\ox)\ge \frac{\kk}{2}\|x-\ox\|^2\quad \mbox{for all}\quad x\in \B_\ve(\ox). 
\]
One of the simplest ways to characterize strong minima is using the {\em second subderivatives}  \cite{BS00,RW98}. 
\begin{Definition}[Second subderivative] \label{SS}
The {\em second subderivative} of $\ph$ at $\ox$ for $\ov\in \partial \varphi(\ox)$ is the function $d^2 \ph(\ox|\ov):\XX\to \oR$  defined by 
\begin{equation}\label{Subd}
d^2\ph(\ox|\ov)(w)\eqdef\liminf_{t\dn 0, w^\prime \to w}\dfrac{\ph(\ox+tw^\prime)-\ph(\ox)-t\la \ov, w^\prime\ra}{\frac{1}{2}t^2}\quad \mbox{for} \quad w\in \XX.
\end{equation}
\end{Definition}
It is well-known \cite[Theorem~13.24]{RW98} that $\ox$ is a strong minimizer of $\varphi$ if and only if $0\in \partial \varphi(\ox)$ and there exists $\ell>0$ such that 
\begin{equation}\label{Stro}
    d^2\varphi(\ox|0)(w)\ge\ell\|w\|^2\quad \mbox{for all}\quad w\in \XX. 
\end{equation}
As discussed in the Introduction, tilt stability introduced by Poliquin and Rockafellar \cite[Definition~1.1]{PR98} plays a central role in our paper. We recall the definition here.

\begin{Definition}[Tilt stability]\label{TS} The point $\ox$ is called a tilt-stable solution of the function $\varphi$ (or problem \eqref{Phi}) if there exists $\gg>0$ such that the following optimal solution mapping
\begin{equation}\label{Mg}
M_\gg(v)\eqdef{\rm argmin}\; \{\varphi(x)-\la v,x\ra|\; x\in \B_\gg(\ox)\}\quad \mbox{for}\quad v\in \XX^*
\end{equation}
is single-valued and Lipschitz continuous on some neighborhood of $0\in \XX^*$ with $M_\gg(0)=\ox$. 
\end{Definition}

 Tilt stability in the above definition is designed for general (possibly nonconvex) cost function $\varphi$ as in \cite{PR98}. In this paper, we focus on the case when $\varphi$ is a convex function. As local minimizers are also  global minimizers in convex programming, it is possible to remove the ball $\B_\gg(\ox)$ in \eqref{Mg}. Indeed, $\ox$ is a tilt stable minimizer of the convex problem \eqref{Phi} if and only if the solution mapping
\begin{equation}\label{SM}
    M(v)={\rm argmin}\; \{\varphi(x)-\la v,x\ra|\; x\in \XX\}\quad \mbox{for}\quad v\in \XX^*
\end{equation}
is single-valued and Lipschitz continuous around $0\in \XX^*$ with $M(0)=\ox$. Note that 
\[
M(v)=(\partial \varphi)^{-1}(v)=\partial\varphi^*(v)
\]
is a convex set for any $v\in \XX^*$. Thus $\ox$ is a tilt-stable solution of problem \eqref{Phi} if and only if $\partial \varphi$ is {\em strongly metrically regular} at $0$ for $\ox$ in the sense that  $(\partial \varphi)^{-1}(0)=\{\ox\}$ and the inverse mapping $(\partial \varphi)^{-1}$ is Lipschitz continuous around $0$; see \cite{AG08, DL13, DMN14, LZ13,MN15,ZT95}. Note that this definition of strong metric regularity is slightly different from the traditional one in \cite[Section~3G]{DR09}, which needs a {\em localization}. In particular, \cite[Section~3G]{DR09} says $\partial \varphi$ is  strongly metrically regular at $0$ for $\ox$ if $(\partial \varphi)^{-1}$ has a {\em Lipschitz continuous singe-valued localization} $s$ around $0$ for $\ox$ in the sense that there exists a neighborhood $U\times V\subset \XX\times \XX^*$ of $(\ox,0)$ such that $\gph s=\gph (\partial \varphi)^{-1}\cap (V\times U)$ and that $s$ is single-valued and Lipschitz continuous in $V$ with $s(0)=\ox$. In this case, for any $v\in V$, $s(v)$ is an isolated point of $(\partial\varphi)^{-1}(v)$. Since  $(\partial\varphi)^{-1}(v)$ is a convex set, the latter implies that $s(v)=(\partial\varphi)^{-1}(v)$, i.e., $(\partial\varphi)^{-1}(\cdot)$ is single-valued and Lipschitz continuous on $V$. 

There are  different characterizations of tilt stability. In the original paper \cite{PR98}, Poliquin and Rockafellar used the {\em generalized Hessian} of the function $\varphi$, which is the {\em limiting coderivative} of the subdifferential $\partial \varphi$ introduced by Mordukhovich \cite{M92} to characterize tilt stability. The disadvantage of this construction is that it may be very hard to compute. In this paper, we avoid calculating it  and actually employ the second subderivative in Definition~\ref{SS}. Before going there, we recall  two  known characterizations of tilt stability that are the {\em uniform quadratic growth condition} \cite{AG08,DL13, DMN14} and the positive definiteness of the {\em subgradient graphical derivative} \cite{CHN18}, a primal-dual structure that is also used in \cite{BGM19} to derive explicit conditions for tilt stability of (nonpolyhedral) second-order cone programming problems.

\begin{Theorem}[Characterization of tilt stability via subgradient]\label{CharT} Suppose that $0\in \partial \varphi(\ox)$.  The following are equivalent:

\begin{itemize}
    \item[{\bf (i)}] {\rm (Tilt stability)} The point $\ox$ is a tilt-stable solution of problem \eqref{Phi}.

    \item[{\bf (ii)}] {\rm (Uniform quadratic growth condition)} There exist some $\kk>0$ and an open neighborhood $U\times V\subset \XX\times \XX^*$ of $(\ox,0)$ such that  for any $(u,v)\in \gph \partial \varphi\cap(U\times V)$ one has 
    \begin{equation}\label{UQGC}
\varphi(x)-\varphi(u)-\la v,x-u\ra\ge \frac{\kk}{2}\|x-u\|^2\quad \mbox{for all}\quad x\in U.
    \end{equation}

    \item[{\bf (iii)}] {\rm (Positive definiteness via the subgradient graphical derivative)} There exist $\kk>0$ and an open neighborhood $U\times V\subset \XX\times \XX^*$ of $(\ox,0)$ such that  
\begin{equation}\label{PD}
\la z,w\ra\ge \frac{1}{\kk}\|w\|^2\quad \mbox{whenever} \quad z\in D\partial \varphi(u|v)(w), (u,v)\in {\rm gph} \partial \varphi\cap(U\times V), 
\end{equation}
where $D\partial \varphi(u|v)(\cdot)$ is known as the {\em subgradient graphical derivative} defined by
\begin{equation}\label{}
    D\partial \varphi(u|v)(w)=\{z\in \XX|\; (z,w)\in T_{{\rm gph}\, \partial \varphi}(u,v)\}
\end{equation}
with $T_{{\rm gph}\, \partial \varphi}(u,v)$ being the contingent cone to $\gph\partial\varphi$ at $(u,v)\in \gph \partial \varphi$ defined in \eqref{Tg}. 
\end{itemize}
\end{Theorem}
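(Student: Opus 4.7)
My plan is to run the cycle (i)~$\Rightarrow$~(ii)~$\Rightarrow$~(iii)~$\Rightarrow$~(i), using convexity throughout so that Fenchel conjugacy and monotone operator theory carry the analysis. Essentially (i)~$\Leftrightarrow$~(ii) is a local version of the Zalinescu duality between strong convexity and Lipschitz gradient, while (i)~$\Leftrightarrow$~(iii) is the graphical-derivative characterization of strong metric regularity for the maximal monotone mapping $\partial\varphi$.

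For (i)~$\Rightarrow$~(ii): the Lipschitz single-valued localization $s:V\to U$ of $(\partial\varphi)^{-1}$ coincides with $\nabla\varphi^*$ on $V$, so $\varphi^*$ is $C^{1,L}$ there, where $L$ is the Lipschitz modulus of $s$. Fix $(u,v)\in\gph\partial\varphi\cap(U\times V)$. The descent-lemma bound $\varphi^*(v')\le \varphi^*(v)+\la u,v'-v\ra+\frac{L}{2}\|v'-v\|^2$ for $v'\in V$, together with $\varphi=\varphi^{**}$, yields upon taking the Fenchel conjugate and using the identity $\varphi(u)+\varphi^*(v)=\la v,u\ra$ that $\varphi(x)\ge \varphi(u)+\la v,x-u\ra+\frac{1}{2L}\|x-u\|^2$ for $x$ in a neighborhood of $u$ uniform in $(u,v)$. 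This is \eqref{UQGC} with $\kk=1/L$.

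For (ii)~$\Rightarrow$~(iii): fix $(u,v)\in\gph\partial\varphi\cap(U\times V)$, $w\in\XX$, $z\in D\partial\varphi(u|v)(w)$, and pick $t_k\dn 0$, $(w_k,z_k)\to(w,z)$ with $(u+t_kw_k,v+t_kz_k)\in\gph\partial\varphi\cap(U\times V)$ eventually. Apply \eqref{UQGC} at $(u,v)$ with $x=u+t_kw_k$ and at $(u+t_kw_k,v+t_kz_k)$ with $x=u$; summing the two inequalities cancels the $\varphi$-values and leaves $t_k^2\la z_k,w_k\ra\ge \kk t_k^2\|w_k\|^2$. Divide by $t_k^2$ and pass to the limit to get $\la z,w\ra\ge \kk\|w\|^2$, which is \eqref{PD} after identifying the two reciprocal moduli.

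For (iii)~$\Rightarrow$~(i), which I expect to be the principal difficulty: rewrite \eqref{PD} via Cauchy--Schwarz as $\|w\|\le \kk\|z\|$ for $w\in D(\partial\varphi)^{-1}(v|u)(z)$, so the inverse graphical derivative of $\partial\varphi$ has outer norm at most $\kk$ uniformly on a neighborhood of $(0,\ox)$. The graphical-derivative criterion for the Aubin property of a maximal monotone mapping (of Kenderov and Dontchev--Rockafellar type) then delivers the Aubin property of $(\partial\varphi)^{-1}=\partial\varphi^*$ at $0$ for $\ox$ with modulus $\kk$. Convexity of each value $\partial\varphi^*(v)$ combined with the local injectivity of $\partial\varphi$ forced by \eqref{PD} (setting $z=0$ gives $w=0$) upgrades this Aubin property to a Lipschitz single-valued localization, which is exactly tilt stability. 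The monotone-operator machinery in this last step, converting the pointwise derivative bound into the genuine single-valued Lipschitz behavior of $(\partial\varphi)^{-1}$, is where the essential work of the proof lies and where maximal monotonicity of $\partial\varphi$ enters decisively.
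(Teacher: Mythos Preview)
The paper does not give its own proof of this theorem; it is quoted as a known result, with (i)$\Leftrightarrow$(ii) attributed to \cite{AG08,DL13,DMN14} and the characterization via (iii) to \cite{CHN18}. So there is nothing in the paper to compare against, and your proposal is effectively an attempt to reconstruct those cited proofs in the convex setting.

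Your arguments for (i)$\Rightarrow$(ii) and (ii)$\Rightarrow$(iii) are correct and standard. The step (iii)$\Rightarrow$(i), however, has a real gap. There is no general ``graphical-derivative criterion for the Aubin property'': for closed-graph multifunctions in finite dimensions, a bound on the outer norm of the inverse graphical derivative at a single reference point characterizes metric \emph{sub}regularity (equivalently calmness of the inverse), not metric regularity; the Aubin property is governed by the coderivative (Mordukhovich criterion). Knowing that your bound holds uniformly on a graphical neighborhood does not by itself upgrade subregularity to regularity, because that upgrade would already require the range of $\partial\varphi$ near $\ox$ to cover a full neighborhood of $0$---precisely the openness you are trying to establish. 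Maximal monotonicity is indeed the missing ingredient, but it must be used concretely rather than by invoking an unspecified ``Kenderov/Dontchev--Rockafellar'' result.

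One clean way to close the gap, in the spirit of \cite{CHN18}, is the Minty parametrization: set $P=(\Id+\partial\varphi)^{-1}$ and $Q=\Id-P$, both $1$-Lipschitz, so that any two nearby graph points $(u_i,v_i)$ are joined inside $\gph\partial\varphi$ by the Lipschitz path $t\mapsto(P(w(t)),Q(w(t)))$ with $w(t)=(1-t)(u_1{+}v_1)+t(u_2{+}v_2)$. Differentiating a.e., applying \eqref{PD} along the path, and integrating (using $u'(t)+v'(t)\equiv(u_2{-}u_1)+(v_2{-}v_1)$ and Jensen) yields the local strong monotonicity $\la v_2-v_1,u_2-u_1\ra\ge\kk^{-1}\|u_2-u_1\|^2$, which immediately gives both single-valuedness and the Lipschitz bound for $(\partial\varphi)^{-1}$. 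Your proposal correctly locates where the monotone-operator machinery is needed but does not supply it.
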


Next we connect the second subderivative with  the subgradient graphical derivative.

\begin{Lemma}\label{Prop1} Let $\varphi$ be a proper l.s.c. convex function. For any $(u,v)\in \gph \partial \varphi$, we have 
\begin{equation}\label{Inq}
2\la z,w\ra\ge d^2 \varphi(u|v)(w)+d^2\varphi^*(v|u)(z)\quad \mbox{whenever}\quad z\in D\partial \varphi(u|v)(w). 
\end{equation}
Consequently,  
\begin{equation}\label{Inq2}
\la z,w\ra\ge\frac{1}{2} d^2 \varphi(u|v)(w)\quad \mbox{whenever}\quad z\in D\partial \varphi(u|v)(w). 
\end{equation}
\end{Lemma}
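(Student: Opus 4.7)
The strategy is to exploit the Fenchel--Young identity on both sides of the contingent-cone inclusion. Since $z\in D\partial\varphi(u|v)(w)$ means $(w,z)\in T_{\gph\partial\varphi}(u,v)$, one can select sequences $t_k\downarrow 0$ and $(w_k,z_k)\to(w,z)$ with $v+t_kz_k\in\partial\varphi(u+t_kw_k)$ for every $k$. Applying \eqref{FY} at $(u,v)$ and at each $(u+t_kw_k,v+t_kz_k)$, and subtracting the two resulting equalities, yields after expansion the clean identity
\[
\bigl[\varphi(u+t_kw_k)-\varphi(u)-t_k\langle v,w_k\rangle\bigr]+\bigl[\varphi^*(v+t_kz_k)-\varphi^*(v)-t_k\langle u,z_k\rangle\bigr]=t_k^2\langle z_k,w_k\rangle.
\]

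Next I would divide through by $\tfrac12 t_k^2$ and take the liminf as $k\to\infty$. The right-hand side converges to $2\langle z,w\rangle$. Each bracket on the left is precisely a difference quotient of the form appearing in Definition~\ref{SS}; because the second subderivative is defined as an infimum over \emph{all} admissible approaching pairs, the liminf of each bracket along the particular chosen sequence is bounded below by $d^2\varphi(u|v)(w)$, respectively $d^2\varphi^*(v|u)(z)$. Combining these lower bounds with the elementary subadditivity $\liminf_k a_k+\liminf_k b_k\le\liminf_k(a_k+b_k)$ delivers \eqref{Inq}. For \eqref{Inq2} I would then invoke the fact that $\varphi^*$ is itself proper l.s.c.\ convex and that $u\in\partial\varphi^*(v)$ (since $\partial\varphi^*=(\partial\varphi)^{-1}$), so that $\varphi^*(v+tz')-\varphi^*(v)-t\langle u,z'\rangle\ge 0$ for every admissible $(t,z')$; hence $d^2\varphi^*(v|u)(z)\ge 0$, and dropping this nonnegative term in \eqref{Inq} gives \eqref{Inq2} at once.

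The computation is essentially a bookkeeping exercise and I do not anticipate a serious obstacle. The one point requiring care is the direction of the liminf estimate: because the defining liminf in \eqref{Subd} ranges over \emph{all} sequences $(t,w')\to(0^+,w)$, the liminf along any particular choice such as $(t_k,w_k)$ is at \emph{least} $d^2\varphi(u|v)(w)$, not at most. Once this direction is correctly recorded, the subadditivity of $\liminf$ is applied the right way and the stated inequalities follow without further manipulation.
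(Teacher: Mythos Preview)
Your proposal is correct and follows essentially the same route as the paper's proof: select sequences witnessing $(w,z)\in T_{\gph\partial\varphi}(u,v)$, apply the Fenchel--Young identity at both $(u,v)$ and $(u+t_kw_k,v+t_kz_k)$, rearrange to isolate the two second-order difference quotients, and pass to the liminf. Your explicit justification that $d^2\varphi^*(v|u)(z)\ge 0$ (via $u\in\partial\varphi^*(v)$) is exactly what the paper leaves implicit when it says \eqref{Inq2} ``follows directly'' from \eqref{Inq}.
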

\begin{proof} Pick any $z\in D\partial \varphi(u|v)(w)$, we find  sequences $(w_k,z_k)\to (w,z)$ and $t_k\dn 0$ such that 
\begin{equation*}
v+t_kz_k\in \partial \varphi(u+t_k w_k). 
\end{equation*}
It follows from the Fenchel-Young's identity \eqref{FY} that 
\[\begin{array}{ll}
\varphi^*(v+t_kz_k)&=\la v+t_kz_k,u+t_k w_k\ra -\varphi(u+t_k w_k)\\
&=\la v,u\ra+t_k\la v, w_k\ra+t_k\la z_k,u\ra+t_k^2\la z_k,w_k\ra-\varphi(u+t_k w_k)\\
&=\varphi^*(v)+\varphi(u)+t_k\la v, w_k\ra+t_k\la z_k,u\ra+t_k^2\la z_k,w_k\ra-\varphi(u+t_k w_k).
\end{array}\]
Hence we have 
\[
2\la z_k, w_k\ra =\dfrac{\varphi(u+t_k w_k)-\varphi(u)-t_k\la v,w_k\ra}{\frac{1}{2}t_k^2}+\dfrac{\varphi^*(v+t_kz_k)-\varphi^*(v)-t_k\la u,z_k\ra}{\frac{1}{2}t_k^2}\\
\]
By taking the liminf both sides, we obtain  \eqref{Inq}. Moreover, \eqref{Inq2} follows directly from \eqref{Inq}, as $\varphi^*$ is a convex function. 
\end{proof}

When the function $\varphi$ is {\em twice epi-differentiable}  at $u\in \XX$ for $v\in \partial \varphi(u)$ in the sense of \cite[Definition~13.6]{RW98}, it is proved  in \cite[Lemma~3.6 and Theorem~3.7]{CHNT21} that \eqref{Inq2} can be improved to 
\begin{equation}\label{ZW}
\la z,w\ra=d^2 \varphi(u|v)(w)\quad \mbox{whenever}\quad z\in D\partial \varphi(u|v)(w). 
\end{equation}
We are ready to set up a new characterization of tilt stability via second subderivative as below. 

\begin{Corollary}[Characterization of tilt stability via second subderivative]\label{Tilt2} Suppose that $\ox$ is a minimizer of $\varphi$. Then $\ox$ is a tilt-stable solution of problem~\eqref{Phi} if and only if there exist $\ell>0$ and an open neighborhood $U\times V\subset \XX\times \XX^*$ of $(\ox,0)$ such that 
\begin{equation}\label{SubT}
d^2\varphi (u|v)(w)\ge \ell\|w\|^2\quad \mbox{whenever}\quad (u,v)\in \gph \partial \varphi \cap (U\times V)\quad \mbox{and}\quad w\in \XX. 
\end{equation}
\end{Corollary}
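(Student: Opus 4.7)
The plan is to derive both implications directly from Theorem \ref{CharT} combined with Lemma \ref{Prop1}, with the second subderivative serving as the bridge between the uniform quadratic growth condition in Theorem \ref{CharT}(ii) and the positive definiteness of the subgradient graphical derivative in Theorem \ref{CharT}(iii).

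For the sufficiency direction (that \eqref{SubT} implies tilt stability), I would fix the open neighborhood $U\times V$ and constant $\ell>0$ furnished by \eqref{SubT}. For any $(u,v)\in \gph \partial \varphi \cap (U\times V)$, any $w\in \XX$, and any $z\in D\partial \varphi(u|v)(w)$, inequality \eqref{Inq2} of Lemma \ref{Prop1} combined with \eqref{SubT} yields
\[
\la z,w\ra \ge \tfrac{1}{2}\, d^2\varphi(u|v)(w)\ge \tfrac{\ell}{2}\|w\|^2,
\]
which is exactly the positive definiteness condition \eqref{PD} with modulus $\kk=2/\ell$. Tilt stability of $\ox$ then follows from the implication (iii)$\Rightarrow$(i) in Theorem \ref{CharT}.

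For the necessity direction (that tilt stability implies \eqref{SubT}), I would invoke the implication (i)$\Rightarrow$(ii) of Theorem \ref{CharT} to obtain $\kk>0$ and an open neighborhood $U\times V$ of $(\ox,0)$ on which the uniform quadratic growth condition \eqref{UQGC} holds. Fix $(u,v)\in \gph \partial \varphi \cap (U\times V)$ and $w\in \XX$, and pick arbitrary sequences $t_k\dn 0$ and $w_k\to w$. Since $\{w_k\}$ is bounded and $t_k\to 0$, the point $u+t_k w_k$ lies in $U$ for all sufficiently large $k$, and \eqref{UQGC} applied with $x=u+t_k w_k$ gives
\[
\varphi(u+t_k w_k)-\varphi(u)-t_k\la v,w_k\ra\ge \tfrac{\kk}{2}\,t_k^2\|w_k\|^2.
\]
Dividing by $\tfrac{1}{2}t_k^2$ and taking the liminf against the definition \eqref{Subd} of the second subderivative yields $d^2\varphi(u|v)(w)\ge \kk\|w\|^2$, which is \eqref{SubT} with $\ell=\kk$ (possibly after shrinking $U$ slightly to absorb the requirement that $u+t_kw_k\in U$).

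The only delicate point is reconciling the restriction $u+t_kw_k\in U$ with the liminf that defines $d^2\varphi(u|v)(w)$: since this inclusion is automatic for large $k$ whenever $w_k$ is bounded and $t_k\to 0$, discarding the finitely many indices for which it fails does not affect the liminf. Beyond this routine bookkeeping I do not anticipate any substantive obstacle, as Theorem \ref{CharT} and Lemma \ref{Prop1} already do all the analytic work and the corollary amounts to a short chain of quotations from them.
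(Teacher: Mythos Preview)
Your proposal is correct and follows essentially the same approach as the paper's proof. The only cosmetic difference is in the necessity direction: the paper packages the passage from the uniform quadratic growth condition to \eqref{SubT} by observing that $u$ is a strong minimizer of $\varphi(\cdot)-\la v,\cdot\ra$ and then invoking the known characterization \eqref{Stro}, whereas you unpack this directly from the definition of the second subderivative---the content is identical.
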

\begin{proof} Suppose that $\ox$ is a tilt-stable solution of problem~\eqref{Phi}. By Theorem~\ref{CharT}, the uniform quadratic growth condition \eqref{UQGC} holds with some modulus $\kk>0$ and open neighborhood $U\times V$ of $(\ox,0)$. For any $(u,v)\in \gph \partial \varphi\cap(U\times V)$, note from \eqref{UQGC} that $u$
 is a strong solution with modulus $\kk$ of the function $\varphi(\cdot)-\la v, \cdot\ra$. It follows from \eqref{Stro} that \eqref{SubT} is satisfied.

To prove the converse implication, suppose that inequality \eqref{SubT} holds for some $\ell>0$ and neighborhood $U\times V$ of $(\ox,0)$. We obtain from Lemma~\ref{Prop1} that 
\[
\la z,w\ra\ge \frac{\ell}{2}\|w\|^2\quad  \mbox{whenever} \quad z\in D\partial \varphi(u|v)(w), (u,v)\in \gph \partial \varphi\cap(U\times V).
\]
By Theorem~\ref{CharT}, $\ox$ is a tilt-stable solution of $\varphi$. \end{proof}

Second subderivative is also an important tool in  \cite{OM23} to obtain some new characterizations of tilt stability, but the function $\varphi$ in that paper is assumed to be twice epi-differentiable. In this case, Corollary~\ref{Tilt2} follows directly from Theorem~\ref{CharT} and equality \eqref{ZW}. Our characterization \eqref{SubT} is different from those in \cite{OM23} and does not need the function $\varphi$ to be twice epi-differentiable.  

\section{Geometric characterizations of Lipschitz stability for convex optimization problems}
\setcounter{equation}{0}

In this section we study tilt stability of optimization problem \eqref{CPS}
\begin{equation*}
    \min_{x\in \XX}\quad \varphi(x)\eqdef f(x)+g(x),
\end{equation*}
where $f,g:\XX\to \oR$ are proper l.s.c. convex functions and  $f$ is twice continuously differentiable in  $\XX$ with full domain. We need some additional conditions on the function $g$. 

\begin{Definition}[Quadratic growth condition for a set]\label{QGCR} Let $g:\XX\to \oR$ be a proper l.s.c. convex function with $(\ox,\oy)\in \gph \partial g$. The function $g$ is said to satisfy the quadratic growth condition at $\ox$ for $\oy$ with modulus $\kk>0$ if there exists $\ve>0$ such that 
\begin{equation}\label{QGC}
 g(x)-g(\ox)-\la \oy, x-\ox\ra\ge \frac{\kk}{2} \left[{\rm dist}\, (x; (\partial g)^{-1}(\oy))\right]^2\quad \mbox{for all}\quad x\in \B_\ve(\ox).    
\end{equation}

Let $\mathcal{M}$ be a subset of $\XX^*$ containing $\oy$. We say $g$ satisfies the quadratic growth condition around $(\ox,\oy)\in \gph \partial g$ for $\mathcal{M}$ if there exist $\eta,\kk>0$ such that for any $(x,y)\in \gph \partial g\cap \B_\eta(\ox,\oy)$ with $y\in \mathcal{M}$ the function  $g$ satisfies the quadratic growth condition at $x$ for $y$ with the same modulus $\kk.$
\end{Definition}

The quadratic growth condition for convex functions \cite{BS00} is known to be equivalent to the \emph{\L{}ojasiewicz inequality with exponent $\frac{1}{2}$} \cite{BNPS17} and the  \emph{metric subregularity of the subdifferential}   \cite{AG08, DMN14, ZT95}. Many important classes of convex functions satisfy these conditions; see, e.g.,  \cite{BNPS17,CDZ17,ZS17}. The quadratic growth condition for a set $\mathcal{M}$ introduced here seems new. It means that the quadratic growth condition holds at $x$ around $\ox$ for any $y\in \partial \varphi(x) \cap \mathcal{M}$ around $\oy$ with the same modulus, i.e., $\partial \varphi$ is metrically subregular at $x$ for $y\in \mathcal{M}$ in the sense of \eqref{MS0} below.  Of course, if the function $g$ satisfies the quadratic growth condition at $\ox$ for $\oy$, it also satisfies the quadratic growth condition around $(\ox,\oy)$ for the trivial set $\mathcal{M}=\{\oy\}$ by applying the Fenchel-Young's identity \eqref{FY}.  Finding the nontrivial set $\mathcal{M}\neq \{\oy\}$ is not simple. Since the modulus is uniform  when $y\in \mathcal{M}$, computing the modulus $\kk$ in \eqref{QGC} will help us determine the set $\mathcal{M}$. Next we discuss a few major functions that meet the quadratic growth condition  for  some nontrivial sets $\mathcal{M}$.

\begin{Example}[Convex piecewise linear-quadratic  functions, {\cite[Definition~10.20]{RW98}}]\label{CQP}{\rm The proper l.s.c. convex function $g:\R^n\to \oR$ is called {\em convex piecewise linear-quadratic} if $\dom g$ is a union of finitely many polyhedral sets, for  each of which the function $g(x)$ has a quadratic expression $\frac{1}{2}\la Ax,x\ra+\la b,x\ra+c$ for some scalar $c\in \R$, vector $b\in \R^n$, and positive semi-definite matrix $A$. This class of functions covers all {\em convex piecewise linear functions} including some important convex regularizers such as the $\ell_1$ (sparsity) norm, the $\ell_\infty$ norm, and the {\em anisotropic total variation} \cite{VDFPD17}. It also  contains the {\em elastic net regularizer} \cite{ZH05}, the $\ell_1+\ell_2$ {\em regularizer} \cite{HG11}, and the {\em discrete Blake-Zisserman regularizer} \cite{BS87}.  In this case, it is well-known \cite[Proposition~12.30]{RW98} that the subdifferential mapping $\partial g$ is {\em piecewise polyhedral}, i.e., the graph of $\partial g$ is a union of finitely many polyhedral sets. Obviously, the inverse subdifferential mapping $(\partial g)^{-1}$ is also piecewise polyhedral. By the landmark result of Robinson \cite{R81} (see also \cite[Proposition~3H.1 and Theorem~3H.3]{DR09}), the mapping $\partial g$ is {\em metrically subregular} at any $\ox\in \dom g$ for any $\oy\in \partial g(\ox)$ with the same modulus $\ell>0$ in the sense that there exists some $
\ve>0$ such that
\begin{equation}\label{MS0}
{\rm dist}\, (x;(\partial g)^{-1}(\oy))\le \ell {\rm dist}\,(\oy;\partial g(x))\quad \mbox{for all}\quad x\in \B_\ve(\ox)\cap\dom \partial g.  
\end{equation}
This condition is known to guarantee the quadratic growth condition \eqref{QGC} with  the modulus  $\kk$ smaller but arbitrarily close to $\ell^{-1}>0$; see, e.g., \cite{AG08, DMN14, ZT95}. Thus the function $g$ satisfies the  quadratic growth condition around $(\ox,\oy)\in \gph \partial g$ for the whole space $\mathcal{M}=\R^n$ or any neighborhood of $\bar y$. \hfill$\triangle$

 }   
\end{Example}

\begin{Example}[$\ell_1/\ell_2$ norm]\label{L12}{\rm Suppose that $\mathcal{J}$ is a partition of $\{1,2, \ldots, n\}$ with $m$ distinct groups. The $\ell_1/\ell_2$ norm for  this partition (a.k.a. the group Lasso regularizer) is defined by 
\begin{equation}\label{l12}
    g(x)=\|x\|_{1,2} \eqdef\sum_{J\in \mathcal{J}}\|x_J\|\quad \mbox{for}\quad x\in \R^n,
\end{equation}
where $\|x_J\|$ is the regular Euclidean norm at $x_J\in \R^{|J|}$ for any $J\in \mathcal{J}$. The subdifferential of this norm is metrically subregular at any $\ox$ for any $\oy \in \partial \|\ox\|_{1,2}$ by \cite[Proposition~9]{ZS17}. By \cite[Theorem~3.3]{AG08}, the $\ell_1/\ell_2$ norm  satisfies the quadratic growth condition. Here we advance this property by showing that it satisfies the quadratic growth condition around  $(\ox,\oy)\in \gph \partial g $ for a nontrivial set $\mathcal{M}$. In order to find the set $\mathcal{M}$, we  compute the modulus $\kk$ in \eqref{QGC} first. For any $J\in \mathcal{J}$,  note that 
\begin{eqnarray}\label{Sub12}
    \partial \|x_J\|=\left\{\begin{array}{ll} \B_J\; &\mbox{if}\; x_J=0_J\in \R^{|J|}\\
    \frac{x_J}{\|x_J\|} &\mbox{otherwise},
    \end{array}\right.
\end{eqnarray}
where $\B_J$ is the unit Euclidean ball in $\R^{|J|}$. Let us write $\oy=(\oy_J)_{J\in \mathcal{J}}$ with $\oy_J\in \partial \|\ox_J\|$. It follows from \eqref{Sub12} that 
\begin{eqnarray}\label{ISub}
(\partial \|\cdot\|)^{-1}(\oy_J)=\left\{\begin{array}{ll} \R_+\oy_J&\quad  \mbox{if}\quad \|\oy_J\|=1\\
\{0\} &\quad  \mbox{if} \quad \|\oy_J\|<1.\end{array}\right.
\end{eqnarray}
For any $x_J\in \R^{|J|}$, we have
\begin{equation}\label{QG1}
    \|x_J\|-\|\ox_J\|-\la \oy_J,x_J-\ox_J\ra=\|x_J\|-\la \oy_J,x_J\ra=\|x_J\|(1-\|\oy_J\|\cos\theta_J),
\end{equation}
where $\theta_J$ is the angle between $x_J$ and $\oy_J$. Let us consider two cases of \eqref{ISub}.

{\bf Case I.} If $\|\oy_J\|=1$, note from \eqref{ISub} that 
\[
{\rm dist }\,(x_J; (\partial \|\cdot\|)^{-1}(\oy_J))={\rm dist }\,(x_J;\R_+\oy_J).
\]
If $\theta_J\in [0,\frac{\pi}{2}]$, we have $
{\rm dist }\,(x_J;\R_+\oy_J)=\|x_J\|\sin \theta_J. $
This together with  \eqref{QG1} gives us that 
\begin{eqnarray}\label{QG2}\begin{array}{ll}
    \|x_J\|-\|\ox_J\|-\la \oy_J,x_J-\ox_J\ra&=\disp\|x_J\|(1-\cos\theta_J)=2\|x_J\|\sin^2(\theta_J/2)\\
    &=\disp\dfrac{1}{2\|x_J\|\cos^2(\theta_J/2)}\left[{\rm dist }\,(x_J; (\partial \|\cdot\|)^{-1}(\oy_J))\right]^2\\
    &\ge \disp\dfrac{1}{2\|x\|}\left[{\rm dist }\,(x_J; (\partial \|\cdot\|)^{-1}(\oy_J))\right]^2.
    \end{array}
\end{eqnarray}
If $\theta_J\in [\frac{\pi}{2},\pi]$,  note that ${\rm dist }\,(x_J; \R_+\oy_J)=\|x_J\|$. It follows from \eqref{QG1} that 
\begin{equation}\label{QG3}
\|x_J\|-\|\ox_J\|-\la \oy_J,x_J-\ox_J\ra=\|x_J\|(1-\cos\theta_J)\ge \|x_J\|\ge  \dfrac{1}{\|x\|}\left[{\rm dist }\,(x_J; (\partial \|\cdot\|)^{-1}(\oy_J))\right]^2.    
\end{equation}

{\bf Case II.} If $\|\oy_J\|<1$, we obtain from \eqref{ISub} that ${\rm dist }\,(x_J; (\partial \|\cdot\|)^{-1}(\oy_J))=\|x_J\|$. Combing this with \eqref{QG1} gives us that 
\begin{equation}\label{QG4}
\|x_J\|-\|\ox_J\|-\la \oy_J,x_J-\ox_J\ra\ge \|x_J\|(1-\|\oy_J\|)\ge \dfrac{1-\|\oy_J\|}{\|x\|}\left[{\rm dist }\,(x_J; (\partial \|\cdot\|)^{-1}(\oy_J))\right]^2.
\end{equation}
We are ready to show that the $\ell_1/\ell_2$ norm satisfies the quadratic growth condition. Define the following {\em subdominant number} $\gg$ of $\oy$:
\begin{equation}\label{gg}
    \gg=\gg(\oy)\eqdef\|\oy_\mathcal{H}\|_{\infty,2}=\max\{\|\oy_J\||\, J\in \mathcal{H}\}<1 
\end{equation}
if  $\mathcal{H}:=\{J\in\mathcal{J}|\; \|\oy_J\|<1\}\neq \emptyset$ and $\gg=0$ if $\mathcal{H}=\emptyset$. It follows from \eqref{QG2}, \eqref{QG3}, and \eqref{QG4} that 
\begin{eqnarray}\label{QG5}\begin{array}{ll}
\|x\|_{1,2}-\|\ox\|_{1,2}-\la \oy,x-\ox\ra
\disp\ge \frac{1-\gg}{2\|x\|}\left[{\rm dist }\,(x; (\partial \|\cdot\|_{1,2})^{-1}(\oy))\right]^2 
\end{array}.
\end{eqnarray}
 This shows the (global) quadratic growth condition at $\ox$ for $\oy$ with an explicit modulus. 

To find the set $\mathcal{M}$, we need to make sure that the subdominant numbers $\gg(y)$ should be close to $\gg$ and should not be close to $1$, when $y\in \mathcal{M}$ is around $\oy$.  A possible choice for $\mathcal{M}$ is 
\begin{equation}\label{M12}
\mathcal{M}_{1,2}\eqdef\{y\in \R^n|\; \|y_J\|=1, J\in \mathcal{K}\}\quad \mbox{with}\quad \mathcal{K}:=\mathcal{J}\setminus\mathcal{H}. 
\end{equation}
If $\mathcal{H}=\emptyset,$ we have  $\mathcal{K}=\mathcal{J}$. This is a trivial case as $\gg(y)=0$ for any $\in \mathcal{M}_{1,2}$. It suffices to consider the case $\mathcal{H}\neq\emptyset$.
For any $\ve\in (0,1-\gg)$ and $(x,y)\in \gph \partial \|\cdot\|_{1,2}\cap \B_\ve(\ox,\oy)$ with $y\in \mathcal{M}_{1,2}$, observe that $\oy$ and $y$ have the same index set $\mathcal{K}$.  Indeed, for any $J\in \mathcal{H}$, we have 
\[
\|y_J\|\le \|y_J-\bar y_J\|+\|\bar y_J\|\le \|y-\bar y\|+\gamma\le\ve+\gamma<1. 
\]
As $y\in \mathcal{M}_{1,2}$, the above inequality tells us that 
\[
\{J\in \mathcal{J}|\; \|y_J\|=1\}=\mathcal{K}.
\]
 It follows that 
\[
|\gg(y)-\gg(\oy)|=|\|y_\mathcal{H}\|_{\infty,2}-\|\oy_\mathcal{H}\|_{\infty,2}|\le \|y_\mathcal{H}-\oy_\mathcal{H}\|_{\infty,2}\le \ve. 
\]
Moreover, for any $u\in \B_\ve(x)\subset \B_{2\ve}(\ox)$, we derive from \eqref{QG5} that 
\[\begin{array}{ll}
\|u\|_{1,2}-\|x\|_{1,2}-\la y,u-x\ra\disp\ge \frac{1-\gg-\ve}{2\|\ox\|+4\ve}\left[{\rm dist }\,(u; (\partial \|\cdot\|_{1,2})^{-1}(y))\right]^2.
\end{array}
\]
As $\ve$ can be chosen arbitrarily small, the latter inequality shows that $\|\cdot\|_{1,2}$ satisfies the quadratic growth condition around $(\ox,\oy)$ for $\mathcal{M}_{1,2}$ with the same modulus. \hfill$\triangle$
}
\end{Example}

\begin{Example}[The indicator of the positive semidefinite cone $\mathbb{S}^n_+$]\label{SDP}{\rm  Let $\XX=\mathbb{S}^n$ be the space of all $n\times n$ symmetric matrices and $\mathbb{S}^n_+$ be the set of all positive semidefinite matrices. The indicator function $\delta_{\mathbb{S}^n_+}(X)$ is defined by $0$ if $X\in \mathbb{S}^n_+$ and $\infty$ otherwise. This function is especially important in  semidefinite programming. It is also known that $\delta_{\mathbb{S}^n_+}(\cdot)$ satisfies the quadratic growth condition \cite[Theorem~2.4]{C16}. For any $(\OX,\OY)\in \gph N_{\mathbb{S}^n_+}(\cdot)$, we have the spectral decomposition of $\OX$ and $\OY$:
\begin{equation}\label{SD}
\OX=\Bar P\begin{pmatrix}\Bar\Lm_\al&0&0\\0&0_\beta&0\\0&0&0_\eta\end{pmatrix}\Bar P^T\qquad\mbox{and}\qquad \OY=  \Bar P\begin{pmatrix}0_\al&0&0\\0&0_\beta&0\\0&0&\Bar \Lm_\eta\end{pmatrix}\Bar P^T\in N_{\mathbb{S}^n_+}(\OX),
\end{equation}
where $\{\al,\beta,\eta\}$ forms a partition of $\{1, \ldots,n\}$, 
$\Bar \Lm_\al={\rm diag}(\bar \lm_1, \ldots, \bar \lm_{|\al|})$ of positive non-increasing eigenvalues of $\OX$, $\Bar \Lm_\eta={\rm diag}(\bar \lm_{|\al|+|\beta|+1}, \ldots, \bar \lm_{n})$ of negative non-increasing eigenvalues of $\OY$, and $\Bar P$ is an orthogonal matrix. Let us consider the case $\OY\neq 0$ first.  \cite[Theorem~2.4]{C16} shows that
\begin{equation}\label{QGCIn}
\delta_{\mathbb{S}^n_+}(X)-\delta_{\mathbb{S}^n_+}(\OX)-\la \OY,X-\OX\ra\ge \frac{-\bar \lm_{|\al|+|\beta|+1}}{2\bar \lm_1+\frac{3}{2}+n}\big[{\rm dist}(X;  (N_{\mathbb{S}^n_+})^{-1}(\OY))\big]^2\quad \mbox{for all}\quad X\in \B_\ve(\OX) 
\end{equation}
for some $\ve>0$. The eigenvalue $\bar \lm_{|\al|+|\beta|+1}$ is the {\em subdominant eigenvalue} of $\OY$.  Let us define 
\begin{equation}\label{MS}
\mathcal{M}_\mathbb{S}\eqdef\{Y\in \mathbb{S}^n|\; \lm_i(Y)=0, i\in \al\cup\beta\}, 
\end{equation}
where $\lm_1(Y),\ldots,\lm_n(Y)$ are the eigenvalue functions in non-increasing order of $Y\in \mathbb{S}^n$. For any $(X,Y)\in \gph N_{\mathbb{S}^n_+}(\cdot)\cap\B_\ve(\OX,\OY)$ and  for some small $\ve>0$ with $Y\in \mathcal{M}_{\mathbb{S}}$, it follows from \eqref{QGCIn} that there exists some $\nu>0$ (possibly depending on $(X,Y)$) such that 
\begin{equation*}
\delta_{\mathbb{S}^n_+}(U)-\delta_{\mathbb{S}^n_+}(X)-\la Y,U-X\ra\ge \frac{- \lm_{|\al|+|\beta|+1}(Y)}{ 2\lm_1(X)+\frac{3}{2}+n}\big[{\rm dist}(X;  (N_{\mathbb{S}^n_+})^{-1}(Y))\big]^2\quad \mbox{for}\quad U\in \B_\nu(X).  
\end{equation*}
As $\lm_1(\cdot)$ and $\lm_{|\al|+|\beta|+1}(\cdot)$ are Lipschitz continuous functions with modulus $1$, we derive from the latter that 
\begin{equation*}
\delta_{\mathbb{S}^n_+}(U)-\delta_{\mathbb{S}^n_+}(X)-\la Y,U-X\ra\ge \frac{- \bar \lm_{|\al|+|\beta|+1}-\ve}{ 2\bar \lm_1+2\ve+\frac{3}{2}+n}\big[{\rm dist}(X;  (N_{\mathbb{S}^n_+})^{-1}(Y))\big]^2\quad \mbox{for}\quad U\in \B_\nu(X).  
\end{equation*}
By choosing $\ve$ small enough, the above inequality tells us that the indicator function $\delta_{\mathbb{S}_+}(\cdot)$ satisfies the quadratic growth condition for the set $\mathcal{M}_\mathbb{S}$. When $\OY=0$, we may choose $\mathcal{M}_\mathbb{S}=\{0\}$. In this case, the quadratic growth condition around $(\OX,\OY)$ for $\mathcal{M}_\mathbb{S}$ is also true with any positive modulus. \hfill$\triangle$
 }   
\end{Example}

 It is worth noting that the choice of $\mathcal{M}$ in Definition~\ref{QGCR} is not necessarily unique; for instance, $\mathcal{M}$ can be chosen as any subset of $\R^n$ containing $\oy$ in Example~\ref{CQP} or simply $\{\oy\}$ in Example~\ref{L12}. The nuclear norm also satisfies the quadratic growth condition for some nontrivial set; see our Lemma~\ref{QG} for further details and discussions.  Next, we recall the following lemma taken from \cite[Lemma~3.2]{FNP23} that provides an estimate for the second subderivative of $g$.

\begin{Lemma}[Quadratic growth condition and second subderivative]\label{QGSS} Suppose that the function $g$ satisfies the quadratic growth condition at $\ox$ for $\oy\in \partial g(\ox)$ with modulus $\kk>0$. Then we have 
\begin{equation}\label{QGD1}
    d^2 g(\ox|\oy)(w)\ge \kk\left[{\rm dist}\, (w;T_{(\partial g)^{-1}(\oy)}(\ox))\right]^2\qquad \mbox{for all}\qquad w\in \XX.
\end{equation}
Consequently, 
\begin{equation}\label{Ker0}
    \Ker d^2 g(\ox|\oy)=\{w\in \XX|\;d^2 g(\ox|\oy)(w)=0\}= T_{(\partial g)^{-1}(\oy)}(\ox).
\end{equation}

\end{Lemma}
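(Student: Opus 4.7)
The plan is to derive \eqref{QGD1} directly from the definition of the second subderivative \eqref{Subd} by plugging the quadratic growth inequality \eqref{QGC} into the difference quotient, and then to obtain the kernel identity \eqref{Ker0} by combining \eqref{QGD1} with a short Fenchel--Young argument that exploits convexity.

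For \eqref{QGD1}, fix $w\in\XX$ and take arbitrary sequences $t_k\dn 0$ and $w_k\to w$. For $k$ large, the points $\ox+t_k w_k$ lie in $\B_\ve(\ox)$, so \eqref{QGC} yields
\[
\frac{g(\ox+t_k w_k)-g(\ox)-t_k\la\oy,w_k\ra}{\tfrac12 t_k^2}\;\ge\;\kk\left[\frac{\dist(\ox+t_k w_k;(\partial g)^{-1}(\oy))}{t_k}\right]^2.
\]
Since $\ox\in(\partial g)^{-1}(\oy)$, the right-hand bracket equals $\dist\bigl(w_k;t_k^{-1}((\partial g)^{-1}(\oy)-\ox)\bigr)$. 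I would then take $\liminf$ on both sides over $(t_k,w_k)$ and use the elementary fact that $\liminf$ commutes with squaring on nonnegative sequences. The remaining step is to show
\[
\liminf_{t\dn 0,\; w'\to w}\dist\!\left(w';\tfrac{(\partial g)^{-1}(\oy)-\ox}{t}\right)\;\ge\;\dist\bigl(w;T_{(\partial g)^{-1}(\oy)}(\ox)\bigr),
\]
which follows from the definition \eqref{Tg}: any sequence $a_k\in t_k^{-1}((\partial g)^{-1}(\oy)-\ox)$ with $\|w_k-a_k\|$ approaching the liminf value is bounded, hence along a subsequence converges to some $a\in T_{(\partial g)^{-1}(\oy)}(\ox)$ with $\|w-a\|$ equal to that liminf. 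Combining these observations yields \eqref{QGD1}.

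For \eqref{Ker0}, the inclusion $\Ker d^2 g(\ox|\oy)\subset T_{(\partial g)^{-1}(\oy)}(\ox)$ is immediate from \eqref{QGD1} together with the closedness of the tangent cone. For the reverse inclusion, pick $w\in T_{(\partial g)^{-1}(\oy)}(\ox)$ and choose $t_k\dn 0$, $w_k\to w$ with $\ox+t_k w_k\in(\partial g)^{-1}(\oy)$, i.e.\ $\oy\in\partial g(\ox+t_k w_k)$. Applying Fenchel--Young's identity \eqref{FY} to the pairs $(\ox,\oy)$ and $(\ox+t_k w_k,\oy)$ and subtracting gives
\[
g(\ox+t_k w_k)-g(\ox)-t_k\la\oy,w_k\ra=0
\]
for every $k$, so the difference quotient in \eqref{Subd} vanishes along this particular sequence, showing $d^2g(\ox|\oy)(w)\le 0$. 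Since convexity of $g$ combined with $\oy\in\partial g(\ox)$ forces the difference quotient to be nonnegative for every admissible $(t,w')$, we conclude $d^2 g(\ox|\oy)(w)=0$, which completes the proof.

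I do not anticipate a serious obstacle here: the only subtle point is the lower semicontinuity-style inequality for the rescaled distance, and this is a standard consequence of the definition of the contingent cone via Painlevé--Kuratowski outer limits. The remainder is a routine combination of the quadratic growth estimate, the subgradient inequality, and the Fenchel--Young identity.
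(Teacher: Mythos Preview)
Your argument is correct. The paper does not actually supply a proof of this lemma; it is quoted verbatim from \cite[Lemma~3.2]{FNP23} and simply cited, so there is no in-paper proof to compare against. That said, your approach is exactly the natural one and matches what one would expect the cited proof to contain: plug the quadratic growth inequality into the second-order difference quotient, use that $(\partial g)^{-1}(\oy)$ is closed and convex so projections exist, and pass to the tangent cone via the Painlev\'e--Kuratowski description \eqref{Tg}. The kernel identity then follows from \eqref{QGD1} for one inclusion and from the Fenchel--Young identity (which makes the numerator vanish identically along a tangent-cone-realizing sequence) for the other, combined with the nonnegativity of the difference quotient coming from $\oy\in\partial g(\ox)$. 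No gaps.
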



To use the quadratic growth condition for a set $\mathcal{M}$ in Definition~\ref{QGCR}, we need $(x,y)\in\gph \partial g$ and $y\in \mathcal{M}$. If $y \notin \mathcal{M}$, we will try to ``push'' it onto $\mathcal{M}$ by approximating it by some element in $\mathcal{M}\cap \partial g(x)$. Such an approximation is introduced below.

\begin{Definition}[Relative approximations of subgradients by a set]\label{RelA} Let $g:\XX\to \oR$ be a proper l.s.c. convex function with $(\ox,\oy)\in \gph \partial g$ and $\mathcal{M}$ be a subset in $\XX^*$ containing $\oy$. We say the subdifferential mapping $\partial g$ has relative approximations  by $\mathcal{M}$ around $(\ox,\oy)$ if there exists $\ve\in (0,1)$ such that for any $(x,y)\in \gph \partial g\cap \B_\ve(\ox,\oy)$, we can find $\lm\in (1-\ve,1]$ and $\hat y\in \partial g(x)\cap\mathcal{M}$ and $\tilde y\in \partial g(x)$ satisfying 
\begin{equation}\label{CA}
   \lm \hat y+(1-\lm)\tilde y=y.
\end{equation} 
Here, $\hat y$ is called a relative approximation of $y$  by $\mathcal{M}$.
\end{Definition}

In later results of this section, we need  the function $g$ to satisfy both the quadratic growth condition for a set $\mathcal{M}$ and the relative approximations of subgradients  by the same set. When $\mathcal{M}$ is the whole space $\XX^*$, the above definition is trivial. This is the case of convex piecewise-linear quadratic functions in Example~\ref{CQP}.  The choice of $\mathcal{M}$ in that example is also not unique, as we can choose $\mathcal{M}$ to be any neighborhood of $\oy$ and \eqref{CA} holds with $\lm=1$, $\hat y=\tilde y=y$.   In general, the set $\mathcal{M}$ in Definition~\ref{RelA} should have a special property that $\partial g(x)\cap \mathcal{M}\neq \emptyset$ for $x$ near to the given point $\ox$. This does not allow us to choose $\mathcal{M}$ as the trivial set $\{\oy\}$ like the case of Definition~\ref{QGC}. 

\begin{Example}[Relative approximations of the subgradients of $\ell_1/\ell_2$ norm]\label{L21}{\rm Let $\|\cdot\|_{1,2}$ be the $\ell_1/\ell_2$ norm defined in \eqref{l12}. Given a point $(\ox,\oy)\in \gph \partial\|\cdot\|_{1,2}$, we claim that the subdifferential mapping  $\partial\|\cdot\|_{1,2}$ always has relative approximations  by the set $\mathcal{M}_{1,2}$ defined in \eqref{M12}. Pick any $\ve\in (0, 1-\gg)$ and any $(x,y)\in \gph \partial \|\cdot\|_{1,2}\cap\B_\ve(\ox,\oy)$ with $\gg$ in \eqref{gg}. If $y\in \mathcal{M}_{1,2}$, we can choose $\lm=1$, $\hat y=\tilde y=y$ so that \eqref{CA} holds. If $y\not \in \mathcal{M}_{1,2}$, define 
\[
\mathcal{K}_1=\{J\in\mathcal{J}|\; \|y_J\|=1\}. 
\]
As $y\in \partial \|x\|_{1,2}$ and $\|y-\oy\|\le \ve$,  observe that $\mathcal{K}_1\subset \mathcal{K}$ when $\ve$ is sufficiently small. Since $y\notin \mathcal{M}_{1,2}$,  we have $\mathcal{I}_1\eqdef\mathcal{K}\setminus \mathcal{K}_1\not=\emptyset$ and thus $\lm\eqdef\min_{J\in \mathcal{I}_1}\|y_J\|<1.$  
For any $J\in \mathcal{I}_1$, note further that  
\[
\ve\ge\|y_J-\oy_J\|\ge \|\oy_J\|-\|y_J\|\ge 1-\|y_J\|,
\]
which implies that $\|y_J\|\ge 1-\ve$ for any $J\in \mathcal{I}_1$, i.e., $\lm\ge 1-\ve$ and $y_J\neq 0$.  

It is time to find $\hat y$ and $\tilde y$ satisfying \eqref{CA}. Define
\begin{equation*}
    \hat y=\left\{\begin{array}{ll}
    \hat y_J= y_J \quad &\mbox{if}\quad J\in \mathcal{J}\setminus \mathcal{I}_1 \\
    \hat y_J= \frac{y_J}{\|y_J\|}\quad  &\mbox{if}\quad J\in \mathcal{I}_1
    \end{array}\right.\qquad \mbox{and}\qquad \tilde y=\left\{\begin{array}{ll}
    \tilde y_J= y_J \quad &\mbox{if}\quad J\in \mathcal{J}\setminus \mathcal{I}_1 \\
    \tilde y_J= \frac{\|y_J\|-\lm}{1-\lm}\cdot \frac{y_J}{\|y_J\|}\quad  &\mbox{if}\quad J\in \mathcal{I}_1
    \end{array}\right.
\end{equation*}
As $\|y_J\|<1$ for $J\in \mathcal{I}_1$, we have $x_J=0$ for $J\in \mathcal{I}_1$. It follows that $\hat y,\tilde y\in \partial \|x\|_{1,2}$. Note further that $\hat y\in \mathcal{M}_{1,2}$ and that $\lm \hat y+(1-\lm)\tilde y=y$. This verifies that $\partial \|\cdot\|_{1,2}$ has relative approximations  by $\mathcal{M}$ around $(\ox,\oy)$. \hfill$\triangle$
}
\end{Example}

\begin{Example}[Relative approximations of  $N_{\mathbb{S}^n_+}(\cdot)$]{\rm  We claim that the normal cone mapping $N_{\mathbb{S}^n_+}(\cdot)$ has  relative approximations  by the set $\mathcal{M}_\mathbb{S}$ in \eqref{MS}. Given a pair $(\OX,\OY)\in \gph N_{\mathbb{S}^n_+}$ with decomposition~\eqref{SD}, for any $(X,Y)\in \gph N_{\mathbb{S}^n_+}\cap\B_\ve(\OX,\OY)$ with  $\ve>0$, it follows from \eqref{SD} that 
\begin{equation}\label{SD2}
X=P\begin{pmatrix}\Lm_{\al^\prime}&0&0\\0&0_{\beta^\prime}&0\\0&0&0_{\eta^\prime}\end{pmatrix} P^T\qquad\mbox{and}\qquad Y=  P\begin{pmatrix}0_{\al^\prime}&0&0\\0&0_{\beta^\prime}&0\\0&0& \Lm_{\eta^\prime}\end{pmatrix} P^T 
\end{equation}
for some orthogonal matrix $P$. 
For sufficiently small $\ve>0$, $\eta\subset\eta^\prime$. If $\eta=\eta^\prime$, i.e.,  $Y\in \mathcal{M}_\mathbb{S}$, we choose $\Hat Y=Y$ as a relative approximation of $Y$  by $\mathcal{M}_\mathbb{S}$. If $\eta\neq\eta^\prime$, we define 
\begin{equation}
\Hat Y=  P\begin{pmatrix}0_{\al}&0&0\\0&0_{\beta}&0\\0&0& \Lm_{\eta}\end{pmatrix} P^T\quad \mbox{and}\quad \Tilde Y= P\begin{pmatrix}0_{\al^\prime\cup\beta^\prime}&0&0\\0&\sigma^{-1}\Lm_{\eta^\prime\setminus\eta}&0\\0&0& \Lm_{\eta}\end{pmatrix} P^T
\end{equation}
with $\sigma=|\lm_{|\al|+|\beta|}(Y)|> 0$. As $\bar\lm_{|\al|+|\beta|}(\OY)=0$, we have
\[
0< \sigma=|\lm_{|\al|+|\beta|}(Y)-\bar\lm_{|\al|+|\beta|}(\OY)|\le \|Y-\OY\|\le \ve. 
\]
Observe further that $\Hat Y\in N_{\mathbb{S}^n_+}(X)\cap \mathcal{M}_\mathbb{S}$, $\Tilde Y\in N_{\mathbb{S}^n_+}(X)$, and that 
$Y=(1-\sigma)\Hat Y+\sigma \Tilde Y$. This shows that $\Hat Y$ is a relative approximation of $Y$  by $\mathcal{M}_\mathbb{S}$. \hfill$\triangle$
}
 \end{Example}

We are ready to set up the central result of this section, which establishes a geometric characterization of tilt stability for problem \eqref{CPS}. 

\begin{Theorem}[Geometric characterization of tilt stability]\label{GeoTilt} Let $\ox$ be an optimal solution of problem \eqref{CPS}. Suppose that there exists a set $\mathcal{M}\subset\XX^*$ containing $\oy=-\nabla f(\ox)\in \partial g(\ox)$ such that the function $g$ satisfies the quadratic growth condition around $(\ox,\oy)$ for  $\mathcal{M}$ and that the subdifferential mapping $\partial g$ has relative approximations  by $\mathcal{M}$ around $(\ox,\oy)$. Then $\ox$ is a tilt-stable solution if and only if 
\begin{equation}\label{Geo}
\Ker \nabla^2 f(\ox)\cap \mathfrak{T}_\mathcal{M}=\{0\},
\end{equation}
where the set $\mathfrak{T}_\mathcal{M}$ is defined by
\begin{equation}\label{T}
\mathfrak{T}_\mathcal{M}\eqdef\Limsup_{(x,y)\stackrel[y\in \mathcal{M}]{{\rm gph}\,\partial g}{\longrightarrow}(\ox,\oy)} T_{(\partial g)^{-1}(y)}(x),
\end{equation}
where the {\rm Limsup} is in the Painlev\'e-Kuratowski sense, i.e., 
\[
\mathfrak{T}_\mathcal{M}=\left\{w\in \XX|\; \exists\, (x_k,y_k)\in \gph \partial g\; {\rm and }\; w_k\in T_{(\partial g)^{-1}(y_k)}(x_k): y_k\in \mathcal{M}, (x_k,y_k,w_k)\to (\ox,\oy,w)\right\}.
\]
\end{Theorem}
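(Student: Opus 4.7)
The strategy is to translate the geometric condition \eqref{Geo} into the analytic characterization of tilt stability provided by Corollary~\ref{Tilt2}. Since $f$ is $C^2$, for $(u,v)\in\gph\partial\varphi$ and $y:=v-\nabla f(u)\in\partial g(u)$, expanding the incremental ratio in Definition~\ref{SS} and using that the $f$-part converges (not merely liminfs) yields the sum rule
\[
d^2\varphi(u|v)(w)=\la\nabla^2 f(u)w,w\ra+d^2 g(u|y)(w).
\]
Thus Corollary~\ref{Tilt2} becomes the property $(\star)$: there exist $\ell>0$ and a neighborhood $\mathcal{W}$ of $(\ox,\oy)$ in $\gph\partial g$ such that $\la\nabla^2 f(u)w,w\ra+d^2 g(u|y)(w)\ge\ell\|w\|^2$ for every $(u,y)\in\mathcal{W}$ and $w\in\XX$. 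Both directions are reduced to $(\star)$.

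\textbf{Necessity.} Assume $(\star)$ and pick $w\in\Ker\nabla^2 f(\ox)\cap\mathfrak{T}$. By definition of $\mathfrak{T}$, choose $(x_k,y_k)\in\gph\partial g$ with $y_k\in\mathcal{M}$, $(x_k,y_k)\to(\ox,\oy)$, and $w_k\in T_{(\partial g)^{-1}(y_k)}(x_k)$, $w_k\to w$. For each $k$ select $t_n\dn 0$ and $w_n^\prime\to w_k$ with $y_k\in\partial g(x_k+t_n w_n^\prime)$; the Fenchel--Young identity \eqref{FY} applied at both $x_k+t_nw_n^\prime$ and $x_k$ gives $g(x_k+t_nw_n^\prime)-g(x_k)-t_n\la y_k,w_n^\prime\ra=0$ for every $n$. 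Combined with the standard convexity lower bound $\ge 0$, this forces $d^2 g(x_k|y_k)(w_k)=0$. Inserting into $(\star)$ yields $\la\nabla^2 f(x_k)w_k,w_k\ra\ge\ell\|w_k\|^2$, and passing to the limit with $\nabla^2 f(\ox)w=0$ forces $w=0$.

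\textbf{Sufficiency.} Assume \eqref{Geo} and suppose $(\star)$ fails. For each $k$ there exist $(u_k,v_k)\in\gph\partial\varphi$ with $(u_k,v_k)\to(\ox,0)$ and a unit vector $w_k$ satisfying $d^2\varphi(u_k|v_k)(w_k)<1/k$. Set $y_k:=v_k-\nabla f(u_k)\in\partial g(u_k)$, so $y_k\to\oy$; both nonnegative summands $\la\nabla^2 f(u_k)w_k,w_k\ra$ and $d^2 g(u_k|y_k)(w_k)$ tend to zero. Extract a subsequence with $w_k\to w$, $\|w\|=1$; continuity of $\nabla^2 f$ then gives $w\in\Ker\nabla^2 f(\ox)$. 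To place $w$ in $\mathfrak{T}$, invoke Definition~\ref{RelA} to write $y_k=\lm_k\hat y_k+(1-\lm_k)\tilde y_k$ with $\hat y_k\in\partial g(u_k)\cap\mathcal{M}$, $\tilde y_k\in\partial g(u_k)$, and $\lm_k\in(1-\ve,1]$, noting (as produced explicitly in Examples~\ref{L12} and \ref{L21}) that $\hat y_k\to\oy$. The decomposition rewrites
\[
g(u_k+tw^\prime)-g(u_k)-t\la y_k,w^\prime\ra=\lm_k\big[g(u_k+tw^\prime)-g(u_k)-t\la\hat y_k,w^\prime\ra\big]+(1-\lm_k)\big[g(u_k+tw^\prime)-g(u_k)-t\la\tilde y_k,w^\prime\ra\big],
\]
with both brackets nonnegative by the subgradient inequality. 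Dividing by $t^2/2$ and taking liminf yields $d^2 g(u_k|y_k)(w_k)\ge\lm_k\,d^2 g(u_k|\hat y_k)(w_k)$, hence $d^2 g(u_k|\hat y_k)(w_k)\to 0$. Lemma~\ref{QGSS} at $(u_k,\hat y_k)$, legitimate because $\hat y_k\in\mathcal{M}$ and $(u_k,\hat y_k)$ eventually lies in the QGC-for-$\mathcal{M}$ neighborhood, gives $\dist(w_k;T_{(\partial g)^{-1}(\hat y_k)}(u_k))\to 0$. Picking $\hat w_k$ in this tangent cone with $\hat w_k\to w$ and using the definition of $\mathfrak{T}$, I conclude $w\in\mathfrak{T}$, contradicting \eqref{Geo}.

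\textbf{Main obstacle.} The delicate technical hinge is the convergence $\hat y_k\to\oy$ along the sequence: Definition~\ref{RelA} literally furnishes only the convex combination with $\lm_k\ge 1-\ve$, while the QGC-for-$\mathcal{M}$ neighborhood is localized at $\oy$. The constructions in Examples~\ref{L12}--\ref{L21} (and, one expects, in the nuclear-norm case) explicitly yield $\hat y_k$ with $\|\hat y_k-y_k\|$ controlled by $\ve$; I would either absorb this closeness property into Definition~\ref{RelA} or verify it case-by-case when applying the theorem. Once this is in hand, the rest is a fairly routine combination of the second-subderivative sum rule, the Fenchel--Young identity, and Lemma~\ref{QGSS}.
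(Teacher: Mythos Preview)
Your argument is essentially the paper's own proof: both directions go through Corollary~\ref{Tilt2} and the sum rule $d^2\varphi(u|v)(w)=\la\nabla^2 f(u)w,w\ra+d^2 g(u|y)(w)$; for necessity you show $d^2 g(x_k|y_k)(w_k)=0$ on the tangent cone (the paper cites Lemma~\ref{QGSS}, you give the equivalent Fenchel--Young computation directly), and for sufficiency both you and the paper use the concavity of $v\mapsto d^2 g(u|v)(w)$ together with the decomposition $y_k=\lm_k\hat y_k+(1-\lm_k)\tilde y_k$ and Lemma~\ref{QGSS} to land $w$ in $\mathfrak{T}$.

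Your ``main obstacle'' is well spotted, and the paper's proof glosses over exactly the same point: to apply Lemma~\ref{QGSS} with the \emph{uniform} modulus $\kappa$ one needs $(x_k,\hat y_k)$ to lie in the QGC-for-$\mathcal{M}$ neighborhood of $(\ox,\oy)$, and to conclude $w\in\mathfrak{T}$ one needs $\hat y_k\to\oy$; neither follows from Definition~\ref{RelA} as written. In each concrete instance (Examples~\ref{L21}, 3.7, and Proposition~\ref{RelA*}) the constructed $\hat y$ satisfies $\|\hat y-y\|\le C\ve$, which closes the gap, so your proposed remedy---either strengthen Definition~\ref{RelA} to include $\|\hat y-y\|\le\ve$, or verify it in each application---is precisely what is needed and matches how the paper actually uses the notion.
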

\begin{proof} Let us start by supposing that $\ox$ is a tilt-stable solution of problem \eqref{CPS}. For any $w$ in the set $\Ker \nabla^2 f(\ox)\cap \mathfrak{T}_\mathcal{M}$, we claim that $w=0$ to verify condition \eqref{Geo}. Indeed, take any sequences $w_k\to w$ and $(x_k,y_k)\stackrel{{\rm gph}\,\partial g}{\longrightarrow}(\ox,\oy)$ with $y_k\in \mathcal{M}$ and $w_k\in T_{(\partial g)^{-1}(y_k)}(x_k)$. Define $v_k:=\nabla f(x_k)+y_k\in \partial \varphi(x_k)$ and note that $v_k\to 0$. By Corollary~\ref{Tilt2}, we find some $\ell>0$ such that
\[
\ell \|w_k\|^2\le d^2\varphi(x_k|\,v_k)(w_k)=\la \nabla^2 f(x_k) w_k,w_k\ra+d^2 g(x_k|\, y_k)(w_k) 
\]
for sufficiently large $k$. As $w_k\in T_{(\partial g)^{-1}(y_k)}(x_k)$, it follows from  Lemma~\ref{QGSS} that $d^2 g(x_k| y_k)(w_k)=0$. The above inequality gives us that 
\[
\ell \|w_k\|^2\le \la \nabla^2 f(x_k) w_k,w_k\ra.
\]
By passing to the limit, we have $\ell\|w\|^2\le \la \nabla^2 f(\ox) w,w\ra=0$, i.e., $w=0$. This ensures \eqref{Geo}. 

To justify the converse implication, suppose that condition \eqref{Geo} holds. If $\ox$ is not a tilt-stable solution of problem \eqref{CPS}, by Corollary~\ref{Tilt2} again there exist a sequence $(x_k,y_k)\stackrel{{\rm gph}\,\partial g}{\longrightarrow}(\ox,\oy)$ and $w_k\in \XX$ with $\|w_k\|=1$ such that 
\begin{equation}\label{Ctr}
    \frac{1}{k}\ge d^2\varphi(x_k|\; \nabla f(x_k)+y_k)(w_k)= \la\nabla^2 f(x_k) w_k,w_k\ra+d^2 g(x_k|\, y_k)(w_k).
\end{equation}
Since the function $g$ satisfies the quadratic growth condition  around $(\ox,\oy)$ for  $\mathcal{M}$, there exist $\ve, \kk>0$ such that for any $(x,y)\in \gph \partial g\cap \B_{2\ve}(\ox,\oy)$ and $y\in \mathcal{M}$, the function $g$ satisfies the quadratic growth condition at any $x$ for  $y$ with the same modulus $\kk>0$. Moreover, as $\partial g$ has relative approximations  by $\mathcal{M}$ around $(\ox,\oy)$, we find $\lm_k\in (1-\ve,1]$, $\hat y_k\in \partial g(x_k)\cap \mathcal{M}$, and $\tilde y_k\in \partial g(x_k)$ such that $\lm_k \hat y_k+(1-\lm_k)\tilde y_k=y_k$ when $k$ is sufficiently large. Note that  $d^2 g(x_k| \cdot)(w_k)$ is a concave function. It follows from \eqref{Ctr} that 
\[\begin{array}{ll}
\frac{1}{k}&\disp\ge \la\nabla^2 f(x_k) w_k,w_k\ra+\lm_k d^2 g(x_k|\, \hat y_k)(w_k)+(1-\lm_k)d^2 g(x_k|\, \tilde y_k)(w_k).\\
&\disp\ge \la\nabla^2 f(x_k) w_k,w_k\ra+(1-\ve) d^2 g(x_k|\, \hat y_k)(w_k).
\end{array}
\]
This together with Lemma~\ref{QGSS} implies that 
\[
\frac{1}{k}\ge \la\nabla^2 f(x_k) w_k,w_k\ra+(1-\ve) \kk \left[{\rm dist}\, (w_k;T_{(\partial g)^{-1}(\hat y_k)}(x_k))\right]^2.
\]
Hence, there exists $u_k\in T_{(\partial g)^{-1}(\hat y_k)}(x_k)$ such that 
\[\frac{1}{k}\ge \la\nabla^2 f(x_k) w_k,w_k\ra+(1-\ve) \kk \|w_k-u_k\|^2\ge (1-\ve) \kk \|w_k-u_k\|^2. 
\]
By passing to subsequences, we may suppose that both $w_k$  and $u_k$ converge to some $w\in \XX$ with $\|w\|=1$. The later inequalities also imply that $0\ge \la\nabla^2 f(\ox) w,w\ra$, i.e., $w\in \Ker \nabla^2 f(\ox)$. As $\hat y_k\in \partial g(x_k)\cap \mathcal{M}$ and $u_k\in T_{(\partial g)^{-1}(\hat y_k)}(x_k)$, we have $w\in  \mathfrak{T}_\mathcal{M}$. It follows that $w\in \Ker \nabla^2 f(\ox)\cap \mathfrak{T}=\{0\}$, which is a contradiction. Hence, $\ox$ must be a tilt-stable solution of problem \eqref{CPS}. 
\end{proof}

As the contingent cone $T_{(\partial g)^{-1}(y)}(x)$ is usually easy to compute, the set $\mathfrak{T}_\mathcal{M}$ looks more computable than the generalized Hessian used in \cite{PR98} and  some other second-order structures in \cite{CHN18,DMN14,MN15} to characterize tilt stability. These second-order structures often lead to the computation of the ``sigma-term" or the ``curvature'' on regularizers $g$ that are also complicated to calculate.  The ``Limsup'' in \eqref{T} relies on the set $\mathcal{M}$, but at this stage, we do not know if $\mathfrak{T}_\mathcal{M}$  actually depends on the choice of $\mathcal{M}$.

\begin{Remark}[Subspace containing derivative]{\rm 
In \cite{GO22}, Gfrerer and Outrata introduced a new characterization of tilt stability via the so-called {\em subspace containing derivative} (SCD, in brief) \cite[Definition~3.3 and Theorem~7.9]{GO22} for the subdifferential mapping $\partial \varphi$, which has strong connection with the {\em quadratic bundle} of $\varphi$ introduced by Rockafellar \cite{R23}; see aslo \cite[Lemma 3.31 and Proposition~3.33]{GO22} and the discussion therein. The SCD of $\partial \varphi$ takes the ``Limsup'' of the graphical derivative $D\partial \varphi$ over the set $\mathcal{O}_{\partial\varphi}$ of $(x,y)$ at which the graph of  $D\partial \varphi(x|\,y)$ (or the tangent cone $T_{{\rm gph}\, \partial\varphi}(x,y)$) is a subspace. The ``Limsup'' in \eqref{T} shares some similar idea, as we also require $y$ to belong to a special set $\mathcal{M}$. However, our set $\mathcal{M}$ is mostly about ``$y$'' and  relates more about the quadratic growth condition. Moreover, the tangent cone in \eqref{T} is taken on the subdifferential set $(\partial g)^{-1}(y)= \partial g^{*}(y)$, not on the whole graph of $\partial\varphi$.  Although it turn outs that $\mathfrak{T}_\mathcal{M}$ is a subspace in different classes of regularizers $g$ in this paper, we do not know  if  the $T_{{\rm gph}\, \partial\varphi}(x,y)$ is a subspace or not when $y\in \mathcal{M}\cap \partial g(x)$. SCDs on various norm functions (not including the nuclear norm) are computed in \cite{G24}. The characterization of tilt stability for the group Lasso problem \eqref{Las12} in Corollary~\ref{LasT} below can be obtained from \cite[Proposition~3.15 and Lemma~3.16]{G24} together with the possible extension of \cite[Example~3.11]{G24} to the $\ell_1/\ell_2$ norm. But the characterization of Lipschitz stability for problem \eqref{Las12} in Theorem~\ref{Lip12} is new. A deeper analysis understanding the connection between these two approaches and particularly between the set $\mathcal{O}_{\partial \varphi}$ and $\mathcal{M}$ will be helpful for further research in this direction.

As the function $g$ satisfies the quadratic growth condition around $(\ox,\oy)$ for $\mathcal{M}$ as in Theorem~\ref{GeoTilt}, it follows from \cite[Corollary 4.2]{GO16} that 
\begin{equation}\label{eq:TgD}
T_{\partial g^*(y)}(x)=D\partial g^*(y,x)(0)
\end{equation}
for any $(x,y)\in \gph \partial g\cap \mathcal{M}$ around $(\ox,\oy)$. The limsup in \eqref{T} tells us that the set $\mathfrak{T}_\mathcal{M}$ is a subset of $D^\sharp \partial g^*(\oy, \ox)(0)$, which is the so-called {\em outer limiting graphical derivative} of $\partial g^*$ at $(\oy, \ox)$ \cite[Definition~3.8]{GO22}:
\begin{eqnarray}\label{def:OL}
 D^\sharp \partial g^*(\oy, \ox)(0)=\left\{w\in \XX|\; \exists\, (w_k,v_k)\st{\XX\times \XX^*}\to (w,0), (x_k,y_k)\st{{\rm gph}\,\partial g}\to (\ox,\oy): w_k\in D\partial g^*(y_k,x_k)(v_k)\right\}.
\end{eqnarray}
For the particular class of convex piecewise linear-quadratic functions, it is not hard to show that  $\mathfrak{T}_\mathcal{M}$ and $D^\sharp \partial g^*(\oy, \ox)(0)$ are the same, due to the piecewise polyhedral property of its subdifferential and the choice $\mathcal{M}$ as the whole space as discussed in Example~\ref{CQP}. ``Whether are they equal for other classes of $g$?'' is the question that we do not have the complete answer yet, as the computation of \eqref{def:OL} is rather involved in our opinion. There are two main differences in their definitions:  the choice of sequence $v_k$ in \eqref{def:OL} is fixed as $0$ in \eqref{T} and $y_k$ in \eqref{def:OL} lies on a special set $\mathcal{M}$ in \eqref{T}. They essentially make the set  $\mathfrak{T}_\mathcal{M}$ more computable in several important cases in this paper; for example, when $g$ is the nuclear norm, it is still hard to obtain the computation of $D\partial g^*(y_k,x_k)(v_k)$ before taking the limit as in \eqref{def:OL}, but the formula $D\partial g^*(y_k,x_k)(0)$ is quite simple (Lemma~\ref{Tang}) and allows us to fully calculate the formula $\mathfrak{T}_\mathcal{M}$ in \eqref{LimTang}; see our Section~4 for further details. Moreover, if the computation of $D\partial g^*(y_k,x_k)(v_k)$ is available, we may use the characterization in Theorem~2.3(iii) from \cite{CHN18} to derive a full characterization of tilt stability. 
}
  \hfill$\triangle$    
\end{Remark}

\begin{Corollary}[Tilt stability for group Lasso problem]\label{LasT} Consider the following group Lasso problem
\begin{equation}\label{Las12}
    \min_{x\in \R^n}\quad f(x)+\|x\|_{1,2},
\end{equation}
where $f:\R^n\to \R$ is a convex function that is continuously twice differentiable and $\|\cdot\|_{1,2}$ is the $\ell_1/\ell_2$ norm defined in \eqref{l12}. Then $\ox$ is a tilt-stable solution if and only if $\oy=-\nabla f(\ox)\in \partial \|\ox\|_{1,2}$ and 
\begin{equation}\label{Tilt12}
   \Ker \nabla^2 f(\ox)\cap \mathcal{V}=\{0\},
\end{equation}
where $\mathcal{V}$ is the subspace of $\R^n$ defined by
\begin{equation}\label{V}
    \mathcal{V}\eqdef\{w\in \R^n|\; w_J\in \R\{\oy_J\}\;\; \mbox{if}\;\; J\in \mathcal{K}\;\; \mbox{and}\;\; w_J=0\;\; if\;\; J\in \mathcal{H}\}
\end{equation}
with $\mathcal{K}=\{J\in \mathcal{J}|\; \|\oy_J\|=1\}$ and  $\mathcal{H}=\{J\in \mathcal{J}|\; \|\oy_J\|_J<1\}$ . 
\end{Corollary}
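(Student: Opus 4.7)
The plan is to apply the geometric characterization of tilt stability in Theorem~\ref{GeoTilt} to $g=\|\cdot\|_{1,2}$ with the set $\mathcal{M}=\mathcal{M}_{1,2}$ defined in \eqref{M12}. The two structural hypotheses needed by the theorem have already been verified in the paper: Example~\ref{L12} shows that $\|\cdot\|_{1,2}$ satisfies the quadratic growth condition around $(\ox,\oy)$ for $\mathcal{M}_{1,2}$, while Example~\ref{L21} shows that $\partial\|\cdot\|_{1,2}$ admits relative approximations onto $\mathcal{M}_{1,2}$ near $(\ox,\oy)$. Once these are invoked, the stationarity condition $\oy=-\nabla f(\ox)\in\partial\|\ox\|_{1,2}$ is simply the first-order necessary condition at the tilt-stable minimizer, so the only substantive task is to identify the outer limit $\mathfrak{T}$ in \eqref{T} with the subspace $\mathcal{V}$ from \eqref{V}.

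The key ingredient is the groupwise separability of the subdifferential. Writing $y=(y_J)_{J\in\mathcal{J}}$, the formula \eqref{ISub} gives
\[
(\partial\|\cdot\|_{1,2})^{-1}(y)=\prod_{J\in\mathcal{J}}(\partial\|\cdot\|)^{-1}(y_J),\qquad (\partial\|\cdot\|)^{-1}(y_J)=\begin{cases}\R_+ y_J & \text{if }\|y_J\|=1,\\ \{0\}&\text{if }\|y_J\|<1.\end{cases}
\]
For $(x,y)$ close to $(\ox,\oy)$ with $y\in\mathcal{M}_{1,2}$, the small perturbation preserves the index sets: $\|y_J\|=1$ for $J\in\mathcal{K}$ by definition of $\mathcal{M}_{1,2}$, and $\|y_J\|<1$ for $J\in\mathcal{H}$ by continuity since $\|\oy_J\|<1$ there. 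Thus $(\partial\|\cdot\|_{1,2})^{-1}(y)=\prod_{J\in\mathcal{K}}\R_+ y_J\times\prod_{J\in\mathcal{H}}\{0\}$, so $x\in(\partial\|\cdot\|_{1,2})^{-1}(y)$ forces $x_J=0$ for $J\in\mathcal{H}$ and $x_J=t_J y_J$ with $t_J\ge 0$ for $J\in\mathcal{K}$.

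Next I would compute the contingent cone groupwise: $T_{\{0\}}(0)=\{0\}$ for the $\mathcal{H}$ components, while for $J\in\mathcal{K}$ one has $T_{\R_+ y_J}(x_J)=\R y_J$ if $x_J\ne 0$ and $T_{\R_+ y_J}(0)=\R_+ y_J$ if $x_J=0$. Taking the Painlev\'e--Kuratowski outer limit as $(x,y)\to(\ox,\oy)$ with $y\in\mathcal{M}_{1,2}$, the $\mathcal{H}$ components remain $\{0\}$, and for $J\in\mathcal{K}$ the limiting directions sweep out $\R \oy_J$ (using that one can always approach along $x_J\ne 0$ to pick up the full line $\R y_J\to\R\oy_J$). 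This identifies $\mathfrak{T}$ exactly with the subspace $\mathcal{V}$ in \eqref{V}.

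The mildly delicate step, and the only real obstacle, is justifying that the outer limit yields the \emph{entire} subspace $\mathcal{V}$ (not a smaller cone): I need to produce, for each $\oy_J$ with $J\in\mathcal{K}$ and each sign of the scalar coefficient, a sequence $(x_k,y_k)\to(\ox,\oy)$ in $\gph\partial\|\cdot\|_{1,2}$ with $y_k\in\mathcal{M}_{1,2}$ and $w_k\in T_{(\partial\|\cdot\|_{1,2})^{-1}(y_k)}(x_k)$ converging to the prescribed direction. This is handled by choosing $y_k=\oy$ and perturbing $x_k$ so that its $J$-component is a small positive multiple of $\oy_J$, whence the tangent component is all of $\R\oy_J$. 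With $\mathfrak{T}=\mathcal{V}$ established, Theorem~\ref{GeoTilt} immediately translates \eqref{Geo} into \eqref{Tilt12}, completing the proof.
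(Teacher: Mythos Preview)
Your proposal is correct and follows essentially the same route as the paper's proof: invoke Examples~\ref{L12} and~\ref{L21} to feed Theorem~\ref{GeoTilt}, then identify $\mathfrak{T}$ with $\mathcal{V}$ by computing the groupwise tangent cones \eqref{Tan} and, for the reverse inclusion, perturbing $\ox$ along $\oy_J$ on the groups $J\in\mathcal{K}\setminus\mathcal{I}$ while keeping $y_k=\oy$, exactly as in \eqref{xy}--\eqref{V12}.
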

\begin{proof} By Example~\ref{L12}, Example~\ref{L21}, and  Theorem~\ref{GeoTilt}, we only need to show that the set $\mathfrak{T}$ defined in  \eqref{T} is $\mathcal{V}$. Indeed, for any $(x,y)\in \gph \partial \|\cdot\|_{1,2}$ with $y\in \mathcal{M}_{1,2}$, we obtain from \eqref{ISub} that 
\begin{equation}\label{Tan}\begin{array}{ll}
T_{(\partial \|\cdot\|_{1,2})^{-1}(y_J)}(x_J)&=\left\{\begin{array}{ll}\left\{\begin{array}{ll}\R_+\{y_J\}\;\;&\mbox{if}\;\; x_J=0\\
\R \{y_J\}\;\;&\mbox{if}\;\; x_J\neq 0. 
\end{array}\right.\subset \R \{y_J\}\quad \mbox{for any}\quad J\in \mathcal{K}\\
T_{0_J}(x_J)=0_J\quad \mbox{for any}\quad J\in \mathcal{H}.\end{array}\right.
\end{array}
\end{equation}
It is easy to derive from \eqref{T} that $\mathfrak{T}_{\mathcal{M}_{1,2}}\subset \mathcal{V}$. To verify the opposite inclusion,  define  $\mathcal{I}\eqdef\{J\in \mathcal{J}|\; \ox_J\neq 0\}\subset \mathcal{K}$ and 
\begin{equation}\label{xy}
x^k=\left\{\begin{array}{ll}(x^k)_J=\ox_J\;\; &\mbox{if}\;\; J\in  \mathcal{I}\cup \mathcal{H}\\
(x^k)_J=\frac{1}{k}\oy_J\;\;&\mbox{if}\;\; J\in \mathcal{K}\setminus\mathcal{I}. 
\end{array}\right.\quad \quad \mbox{and}\quad y^k=\oy. 
\end{equation}
Note that $(x^k,y^k)\to (\ox,\oy)$, $y^k\in \partial \|x^k\|_{1,2}\cap \mathcal{M}_{1,2}$, and from \eqref{Tan} that  
\begin{equation}\label{V12}
T_{(\partial \|\cdot\|_{1,2})^{-1}(y^k)}(x^k)=T_{(\partial \|\cdot\|_{1,2})^{-1}(\oy)}(x^k)=\mathcal{V},
\end{equation}
which implies that $\mathcal{V}\subset \mathfrak{T}_{\mathcal{M}_{1,2}}$ and thus $\mathfrak{T}_{\mathcal{M}_{1,2}}=\mathcal{V}$. By Theorem~\ref{GeoTilt}, $\ox$ is a tilt-stable solution of problem \eqref{Las12} if and only if condition \eqref{Tilt12} is satisfied.
\end{proof}

When the $\ell_1/\ell_2$ norm reduces to the $\ell_1$ norm for Lasso problem, the above result can be obtained easily from \cite[Proposition~3.15, Lemma~ 3.16, and Example~3.8]{G24} with a different approach via the SC derivatives. For the case of $\ell_1/\ell_2$ norm as in Corollary~\ref{LasT}, the characterization of tilt stability in \eqref{Tilt12} can be also derived from \cite[Proposition~3.15 and Lemma~ 3.16]{G24} by computing the SC derivative for the $\ell_1/\ell_2$ norm, which is just the separable sum of  $\ell_2$ norm functions \cite[Example~3.11]{G24}. The choice of $(x_k,y_k)$ in the above computation is to make  $T_{(\partial \|\cdot\|_{1,2})^{-1}(y^k)}(x^k)$ to be a subspace; see also the proof of our Theorem~\ref{LipNu} for the case of nuclear norm. This is close to the idea of using the  {\em subspace containing derivative} in \cite{GO22}. The difference is that we do not compute the tangent cone to the whole graph of $\partial g$ as in \cite{GO22}, but only ``$y$'' part of it. 

As discussed in the Introduction, a particular problem of \eqref{CPS} is  
\begin{equation}\label{QP}
  \mathcal{P}(b,\mu)\qquad \min_{x\in \XX}\quad   \frac{1}{2\mu}\|\Phi x-b\|^2+g(x), 
\end{equation}
where $\Phi$ is a linear operator between two Euclidean spaces  $\XX$ and $\YY$ and $(b,\mu)\in \YY\times \R_{++}$ is a parameter pair around some fixed  $(\bar b,\bar \mu)\in \YY\times \R_{++}$. 
The solution set of this problem is defined in \eqref{Sp} and \eqref{GE} by
\begin{eqnarray}\label{S}
    S(b,\mu)={\rm argmin}\left\{\frac{1}{2\mu}\|\Phi x-b\|^2+g(x)|\; x\in \XX\right\}=\left\{x\in \XX|\; 0\in \frac{1}{\mu}\Phi^*(\Phi x-b)+\partial g(x)\right\}.
\end{eqnarray}
Next we will provide some analysis for Lipschitz continuity of this solution mapping. It is similar to the discussion after Definition~\ref{TS} that in our convex settings, the mapping $S$ is single-valued and Lipschitz continuous around $(\bar b,\bar \mu)$ for $\ox=S(\bar b,\bar \mu)$ if and only if it has a Lipschitz continuous single-valued localization around $(\bar b,\bar \mu)$ for $\ox=S(\bar b,\bar \mu)$.  Moreover, since the function $h(x, b,\mu)=\frac{1}{\mu}\Phi^*(\Phi x-b)$ is strictly differentiable at $(\ox, (\bar b,\bar \mu))$, $S$ has single-valued and Lipschitz continuous around $(\bar b,\bar \mu)$ for $\ox$ if the following mapping
\[
X^*\ni v\mapsto G(v)\eqdef\{x\in \XX|\;v\in h(\bar x, \bar b,\bar \mu)+ \nabla_xh(\ox, \bar b,\bar \mu)(x-\ox)+\partial g(x)\}
\]
single-valued and Lipschitz continuous around $0$ for $\ox=G(0)$; see, e.g., \cite[Corrollary~2B.9]{DR09}.  As $h$ is a linear function with respect to $x$, we have 
\[
h(\ox, \bar b,\bar \mu)+ \nabla_xh(\ox, \bar b,\bar \mu)(x-\ox)=h(x, \bar b,\bar \mu)=\frac{1}{\bar \mu}\Phi^*(\Phi x-\bar b).
\]
It follows that  
\[
G(v)={\rm argmin}\left\{\frac{1}{2\bar \mu}\|\Phi x-\bar b\|^2+g(x)-\la v,x\ra|\; x\in \XX\right\}. 
\]
Thus $G$ is single-valued and Lipschitz continuous around $0$ for $\ox=G(0)$ if and only if $\ox$ is a tilt-stable solution of problem $P(\bar b,\bar \mu)$. 
These analysis together with Theorem~\ref{GeoTilt} gives us a sufficient condition for local Lipschitz continuity of the solution mapping $S$ as below.

\begin{Corollary}[Lipschitz stability of solution mapping]\label{LipS12} Suppose that $\ox$ is an optimal solution of problem $\mathcal{P}(\bar b,\bar \mu)$ in  \eqref{QP}. Suppose further  that there exists a set $\mathcal{M}\subset \XX^*$ containing $\oy=-\frac{1}{\bar \mu}\Phi^*(\Phi \ox-\bar b)$ such that the function $g$ satisfies the quadratic growth condition around $(\ox,\oy)$ for  $\mathcal{M}$ and that the subdifferential mapping $\partial g$ has relative approximations  by $\mathcal{M}$ around $(\ox,\oy)$. Then  the solution mapping $S$ is single-valued and Lipschitz continuous around $(\bar b,\bar \mu)$ with $\ox=S(\bar b,\bar \mu)$ provided that 
\begin{equation}\label{Sb}
\Ker \Phi \cap \mathfrak{T}_\mathcal{M}=\{0\}. 
\end{equation}
\end{Corollary}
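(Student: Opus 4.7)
The plan is to combine the two reductions already laid out in the paper with a straightforward identification of the Hessian kernel. The discussion immediately preceding the corollary has already done the heavy lifting: it shows that single-valued Lipschitz continuity of $S$ around $(\bar b,\bar\mu)$ for $\ox$ is equivalent to single-valued Lipschitz continuity of the linearized mapping $G$ around $0$ for $\ox$, which in turn coincides with the solution map of the tilted problem
\begin{equation*}
\min_{x\in\XX}\ \frac{1}{2\bar\mu}\|\Phi x-\bar b\|^{2}+g(x)-\la v,x\ra,
\end{equation*}
i.e.\ with tilt stability of $\ox$ for problem $\mathcal{P}(\bar b,\bar\mu)$. Thus I only need to derive tilt stability of $\ox$ from condition \eqref{Sb}.

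Next I put problem $\mathcal{P}(\bar b,\bar \mu)$ into the format of \eqref{CPS} by setting $f(x)=\frac{1}{2\bar\mu}\|\Phi x-\bar b\|^{2}$; this $f$ is convex and $C^{2}$ on $\XX$. A direct computation gives
\begin{equation*}
\nabla f(x)=\frac{1}{\bar\mu}\Phi^{*}(\Phi x-\bar b)\quad\text{and}\quad\nabla^{2}f(x)=\frac{1}{\bar\mu}\Phi^{*}\Phi,
\end{equation*}
so in particular $\oy=-\nabla f(\ox)=-\frac{1}{\bar\mu}\Phi^{*}(\Phi\ox-\bar b)$, matching the subgradient at which the standing hypotheses on $g$ are posed, and $\oy\in\partial g(\ox)$ follows from optimality of $\ox$ in $\mathcal{P}(\bar b,\bar\mu)$.

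The final step is to identify $\Ker\nabla^{2}f(\ox)$ with $\Ker\Phi$. Since $\la \Phi^{*}\Phi w,w\ra=\|\Phi w\|^{2}$, we have $\Phi^{*}\Phi w=0$ if and only if $\Phi w=0$, so $\Ker\nabla^{2}f(\ox)=\Ker\Phi$. Under the two structural hypotheses on $g$, Theorem~\ref{GeoTilt} applies and yields that $\ox$ is a tilt-stable solution of $\mathcal{P}(\bar b,\bar\mu)$ if and only if $\Ker\nabla^{2}f(\ox)\cap\mathfrak{T}=\{0\}$, which is now literally \eqref{Sb}. Combined with the reduction recalled in the first paragraph, this delivers single-valuedness and Lipschitz continuity of $S$ around $(\bar b,\bar\mu)$ with $\ox=S(\bar b,\bar\mu)$.

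There is essentially no hard obstacle: the substantive content of the corollary is absorbed into Theorem~\ref{GeoTilt} and into the generalized implicit function reduction already written out in the preamble. The only thing to be mildly careful about is to verify the hypotheses of \cite[Corollary~2B.9]{DR09} used implicitly in that reduction (strict differentiability of $h(x,b,\mu)=\frac{1}{\mu}\Phi^{*}(\Phi x-b)$ at $(\ox,\bar b,\bar\mu)$), and to remark that once $\nabla^{2}f(\ox)$ is recognized as $\frac{1}{\bar\mu}\Phi^{*}\Phi$, the positive constant $\frac{1}{\bar\mu}$ does not affect the kernel, so no further adjustment of the set $\mathfrak{T}$ is needed.
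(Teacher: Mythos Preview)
Your proposal is correct and follows essentially the same route as the paper: invoke the Robinson-type reduction spelled out in the paragraph preceding the corollary to pass from Lipschitz stability of $S$ to tilt stability of $\mathcal{P}(\bar b,\bar\mu)$, then apply Theorem~\ref{GeoTilt} with $f(x)=\frac{1}{2\bar\mu}\|\Phi x-\bar b\|^{2}$ and the identification $\Ker\nabla^{2}f(\ox)=\Ker\Phi$. The paper leaves that last kernel identification implicit, so your explicit computation is a harmless elaboration; the only (inconsequential) overstatement is calling the reduction an ``equivalence'' when the cited implicit-function result \cite[Corollary~2B.9]{DR09} only furnishes the sufficient direction, which is all the corollary needs.
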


It is worth noting that \eqref{Sb} and tilt stability of problem~$P(\bar b,\bar \mu)$ are only sufficient  for the Lipschitz stability of  $S$ in general. For some special classes of $g$ \cite{DR09,CD19,MNR15,S06},  sufficient conditions for Lipschitz stability of $S$ can be obtained via the {\em Robinson's strong regularity} introduced in \cite{R80}. But again, these two kinds of stability  are  different; see, e.g., \cite{DR09} for further details and discussions. Moreover, Robinson's strong regularity is usually characterized via the so-called {\em Strong Second-Order Sufficient Condition} that may contain some second-order structures that are also involved to compute and verify, e.g., when $g$ is the nuclear norm.  Our approach is different via tilt stability. It does require some special properties on $g$, but the most important thing is condition \eqref{Sb} can be fully calculated. 

Next we will show that condition \eqref{Sb} is also necessary for the Lipschitz stability of solution mapping $S$ in \eqref{S} when the function $g$ is the  $\ell_1/\ell_2$ norm. Indeed, both of them are equivalent to the single-valuedness of solution mapping $S$. This is quite a surprise, as Lipschitz continuity is  not a free property for single-valued mappings in general. 

\begin{Theorem}[Full characterization of Lipschitz stability for group Lasso problem]\label{Lip12} Consider the following parametric group Lasso problem 
\begin{equation}\label{Las21}
    \min_{x\in\R^n}\quad \frac{1}{2\mu} \|\Phi x-b\|^2+\|x\|_{1,2},
\end{equation}
 where $\Phi:\R^n\to\R^m$ is a linear operator and $(b,\mu)\in \R^m\times \R_{++}$ is a parameter pair around some fixed $(\bar b,\bar \mu)\in \R^m\times \R_{++}$. The following are equivalent: 
 \begin{itemize}
\item[{\bf(i)}] The following Strong Sufficient Condition holds
 \begin{equation}\label{LipS}
     \Ker \Phi\cap \mathcal{V}=\{0\},
 \end{equation}
 where $\mathcal{V}$ is defined in \eqref{V} with $\oy=-\frac{1}{\bar \mu}\Phi^*(\Phi\ox-\bar b)$.

\item[{\bf(ii)}] The solution mapping $S$ of problem \eqref{Las21} is single-valued and Lipschitz continuous around $(\bar b,\bar \mu)$ with $S(\bar b,\bar \mu)=\ox$.

\item[{\bf(iii)}] The solution mapping $S$ of problem \eqref{Las21} is single-valued  around $(\bar b,\bar \mu)$ with $S(\bar b,\bar \mu)=\ox$.
 \end{itemize}
\end{Theorem}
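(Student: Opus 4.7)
The plan is to close the loop (i)$\Rightarrow$(ii)$\Rightarrow$(iii)$\Rightarrow$(i). The first implication is immediate from the results already in place: by Corollary~\ref{LasT} the set $\mathfrak{T}$ appearing in Theorem~\ref{GeoTilt} equals $\mathcal{V}$ for the group Lasso, while Examples~\ref{L12} and~\ref{L21} verify that $\|\cdot\|_{1,2}$ satisfies both the quadratic growth condition and the relative approximation property with respect to $\mathcal{M}_{1,2}$; hence Corollary~\ref{LipS12} yields the Lipschitz single-valued localization. The step (ii)$\Rightarrow$(iii) is trivial. The real content of the theorem is (iii)$\Rightarrow$(i), which I would attack by contraposition: assuming $0\neq w\in\Ker\Phi\cap\mathcal{V}$, I would exhibit parameter pairs $(b_s,\bar\mu)$ converging to $(\bar b,\bar\mu)$ at which the solution set contains two distinct points.

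Write $w_J=\alpha_J\oy_J$ for $J\in\mathcal{K}$ and $w_J=0$ for $J\in\mathcal{H}$, and split $\mathcal{K}\setminus\mathcal{I}=\mathcal{K}^+\cup\mathcal{K}^0\cup\mathcal{K}^-$ according to the sign of $\alpha_J$. Decompose $w=w^++w^-$, where $w^+$ carries the entries of $w$ on $\mathcal{I}\cup\mathcal{K}^+\cup\mathcal{K}^0$ and $w^-$ carries the entries on $\mathcal{K}^-$. Because $\Phi w=0$, we have $\Phi w^+=-\Phi w^-$. Set $b_s\eqdef\bar b+s\Phi w^+$ and introduce the two candidates $x^1_s\eqdef \ox+sw^+$ and $x^2_s\eqdef \ox-sw^-$. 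A direct expansion shows $\Phi x^1_s-b_s=\Phi x^2_s-b_s=\Phi\ox-\bar b$, so $-\tfrac{1}{\bar\mu}\Phi^*(\Phi x^i_s-b_s)=\oy$ for $i=1,2$. Thus $x^i_s$ is a solution of $\mathcal{P}(b_s,\bar\mu)$ if and only if $\oy\in\partial\|x^i_s\|_{1,2}$.

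The latter subgradient check is performed block by block using the formula~\eqref{Sub12}. For $J\in\mathcal{I}$ the block $(x^1_s)_J=(\|\ox_J\|+s\alpha_J)\oy_J$ is a strictly positive multiple of $\oy_J$ for $s>0$ small; on $\mathcal{K}^+$ the block $s\alpha_J\oy_J$ has strictly positive coefficient and hence retains direction $\oy_J$; on $\mathcal{K}^0\cup\mathcal{K}^-\cup\mathcal{H}$ the block of $x^1_s$ vanishes and the full unit ball, which contains $\oy_J$, is available. The same bookkeeping applies to $x^2_s$, with $\mathcal{K}^-$ now supplying the nonzero block through $-s\alpha_J\oy_J$ (which is positive in the direction $\oy_J$ because $\alpha_J<0$) and $\mathcal{I}$ remaining at the unchanged $\ox_J$. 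Consequently both $x^1_s$ and $x^2_s$ solve $\mathcal{P}(b_s,\bar\mu)$ for every sufficiently small $s>0$, yet $x^1_s-x^2_s=sw\neq 0$, contradicting the single-valuedness of $S$ on a neighborhood of $(\bar b,\bar\mu)$.

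The main obstacle is the bookkeeping needed when $\alpha_J$ has mixed signs on $\mathcal{K}\setminus\mathcal{I}$: at the fixed parameter $(\bar b,\bar\mu)$ neither $\ox+tw$ nor $\ox-tw$ has the correct direction on all blocks simultaneously, so the inclusion $\oy\in\partial\|\ox\pm tw\|_{1,2}$ can fail on one or the other side. The device of shifting the data to $b_s=\bar b+s\Phi w^+=\bar b-s\Phi w^-$ uses $w\in\Ker\Phi$ to keep the residual, and hence the normal equation, intact, while allowing the two perturbations $+sw^+$ and $-sw^-$ to be analyzed independently on their disjoint active supports; when $\mathcal{K}^+$ or $\mathcal{K}^-$ happens to be empty, the construction degenerates to a genuine perturbation at $(\bar b,\bar\mu)$ itself and still produces a second minimizer.
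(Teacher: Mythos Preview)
Your proof is correct, and the step (iii)$\Rightarrow$(i) follows a genuinely different route from the paper's. The paper argues \emph{directly}: it perturbs $\ox$ to $x^k$ by adding $\tfrac{1}{k}\oy_J$ on each block $J\in\mathcal{K}\setminus\mathcal{I}$, sets $b_k=\Phi(x^k-\ox)+\bar b$, and observes that $x^k$ is the unique solution of $\mathcal{P}(b_k,\bar\mu)$. It then invokes the external result \cite[Theorem~5.12]{FNT21} (uniqueness implies strong minimality for group Lasso), passes through the second-subderivative characterization \eqref{Stro} and Lemma~\ref{QGSS}, and reads off $\Ker\Phi\cap T_{(\partial\|\cdot\|_{1,2})^{-1}(\oy)}(x^k)=\{0\}$, which by \eqref{V12} is exactly \eqref{LipS}.

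Your contrapositive construction is more elementary and fully self-contained: the sign-splitting $w=w^++w^-$ on $\mathcal{K}\setminus\mathcal{I}$, together with the single data shift $b_s=\bar b+s\Phi w^+=\bar b-s\Phi w^-$ (using $\Phi w=0$), neatly sidesteps the obstruction that $\ox\pm tw$ can point the wrong way on blocks of $\mathcal{K}^\mp$. The block-by-block check of $\oy\in\partial\|x^i_s\|_{1,2}$ is straightforward, and the two candidates differ by $sw\neq0$ while sharing the same residual. What you lose is the structural insight that single-valuedness already forces strong minimality at nearby perturbations; what you gain is that no outside reference is needed and the argument transplants easily to other decomposable regularizers whose subdifferential is a product of simple sets.
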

\begin{proof} According to Corollary~\ref{LipS12}, [{\bf (i)}$\Rightarrow${\bf (ii)}] is valid. The implication [{\bf (ii)}$\Rightarrow${\bf (iii)}] is trivial. It suffices to verify [{\bf (iii)}$\Rightarrow${\bf (i)}]. Suppose that the solution mapping \eqref{S} is single-valued around $(\bar b,\bar \mu)$ with $S(\bar b,\bar \mu)=\ox$. With all the sets $\mathcal{J}, \mathcal{K}, \mathcal{H}$ defined in Corollary~\ref{LasT} and  $\mathcal{I}=\{J\in \mathcal{J}|\; \ox_J\neq 0\}$, we define from \eqref{xy}
\begin{equation}\label{xy2}
x^k=\left\{\begin{array}{ll}(x^k)_J=\ox_J\;\; &\mbox{if}\;\; J\in \mathcal{I}\cup \mathcal{H}\\
(x^k)_J=\frac{1}{k}\oy_J\;\;&\mbox{if}\;\; J\in \mathcal{K}\setminus \mathcal{I}
\end{array}\right.\quad \mbox{and}\quad y^k=\oy. 
\end{equation}
Note that $(x^k,y^k)\to (\ox,\oy)$ and $y^k\in \partial \|x^k\|_{1,2}$. Define 
\[
b_k= \Phi(x^k-\ox)+\bar b.
\]
Observe that 
\[
-\frac{1}{\bar \mu}\Phi^*(\Phi x^k-b_k)=-\frac{1}{\bar \mu} \Phi^*( \Phi \ox-\bar b)=y^k\in \partial \|x^k\|_{1,2}.
\]
It follows that $x^k$ is the solution of the following problem 
\begin{equation}\label{B12}
\min_{x\in\R^n}\quad \frac{1}{2\bar\mu} \|\Phi x-b_k\|^2+\|x\|_{1,2},
\end{equation}
which means $x^k=S(b_k,\bar \mu)$ is the unique solution of problem \eqref{B12} for large $k$. It is proved in \cite[Theorem~5.12]{FNT21} that a unique solution of group Lasso problem \eqref{B12} is actually a strong solution. Hence $x^k$ is also the strong solution of problem \eqref{B12}. By \eqref{Stro}, there exists some $\ell_k>0$ such that 
\[
\ell_k \|w\|^2\le \|\Phi w\|^2+d^2\|\cdot\|_{1,2}(x^k|\, y^k)(w)\quad \mbox{for all}\quad w\in \R^n. 
\]
As the $\ell_{1}/\ell_{2}$ norm satisfies the quadratic growth condition at $x_k$ for $y_k$ in Example~\ref{l12}, we obtain from Lemma~\ref{QGSS} that 
\begin{equation}\label{B21}
\Ker \Phi \cap T_{(\partial \|\cdot\|_{1,2})^{-1}(y^k)}(x^k)=\{0\}.
\end{equation}
Note again from \eqref{V12}  that $
T_{(\partial \|\cdot\|_{1,2})^{-1}(y^k)}(x^k)=\mathcal{V}$. 
This together with \eqref{B21} verifies Strong Sufficient Condition \eqref{LipS}. 
\end{proof}

\begin{Remark}{\rm A particular case of $\ell_{1,2}$ norm is the $\ell_1$ norm at which problem \eqref{Las21} is the Lasso problem. Lipschitz stability of solution mapping of this problem was studied recently in \cite{BBH22}. Our Strong Sufficient Condition \eqref{LipS} is exactly \cite[Assumption~4.3]{BBH22}, which was introduced in \cite{T13} as a sufficient condition for the solution uniqueness of Lasso; see also \cite[Lemma~1]{MY12} for a similar condition. \cite{BBH22} shows that \eqref{LipS} is  only sufficient condition for Lipschitz stability of the solution mapping $S$. We advance \cite[Theorem~4.13]{BBH22} by adding that it is also necessary condition. Moreover, if $S$ is single-valued around the point in question, it is also Lipschitz continuous. \hfill$\triangle$ 
}
\end{Remark}

Strong Sufficient Condition~\eqref{LipS} actually means that the vectors $\{\Phi_J \oy_J\}_{J\in\mathcal{K}}$ are linearly independent, where $\Phi_J$ is the submatrix of $\Phi$ with columns $J$. This condition is stronger than the one that $\{\Phi_J \ox_J\}_{J\in\mathcal{I}}$ is linearly independent introduced in \cite{LZ09} to prove the $\ell^2$-consistency of group Lasso problem. The latter condition is also equivalent to the Restricted Positive Definiteness \cite{VDFPD17} for this framework, which is used to show that the solution mapping $S(\cdot, \bar \mu)$  is continuously differentiable at $\bar b$ under another nontrivial condition that $\bar b$ does not belong to the so-called {\em transition space}; see \cite[Theorem~1]{VDFPD17}. When $S(\cdot, \bar \mu)$ is continuously differentiable around $\bar b$, it should be single-valued, i.e., Strong Sufficient Condition is valid by Theorem~\ref{Lip12} (fixing $\mu$ does not change the proof of Theorem~\ref{Lip12}). The following example shows that the solution mapping $S(\cdot,\bar \mu)$ may be single-valued and Lipschitz continuous without being  differentiable. Of course, one of the reasons is our $\bar b$ does not belong the transition space as in \cite[Theorem~1]{VDFPD17}. As a result, our Strong Sufficient Condition~\eqref{LipS} is weaker than the two aforementioned conditions in \cite[Theorem~1]{VDFPD17}.  

 \begin{Example}[Non-differentiability of solution mapping]\label{Non}{\rm Let us consider the following group Lasso problem ($2$ groups) 
 \begin{equation}
    \mathcal{P}(b)\qquad  \min_{x\in \R^3}\quad \frac{1}{2}\|\Phi x-b\|^2+g(x)
 \end{equation}
 with $g(x)=\sqrt{x_1^2+x_2^2}+|x_3|$, $\Phi=\begin{pmatrix} 1&1&0\\1&0&-1\end{pmatrix}$, $\bar \mu=1$, and $\bar b=(2,-1)^T$. Define $\ox=(0,1,0)^T$ and $\oy=-\Phi^*(\Phi \ox-\bar b)=(0,1,1)^T$. Note that $\oy\in \partial g(\ox)$, which means $\ox$ is an optimal solution of $\mathcal{P}(\bar b)$. The set $\mathcal{V}$ in \eqref{V12} is computed by 
 \[
 \mathcal{V}={\rm span}\{(0,1,0)^T,(0,0,1)^T\}. 
 \]
It is easy to check that Strong Sufficient Condition \eqref{LipS} holds, i.e., $\Ker \Phi\cap \mathcal{V}=\{0\}$. By Theorem~\ref{Lip12}, $x(b)=S(b,1)$ is single-valued and Lipschitz continuous around $\bar b$ with $x(\bar b)=\ox$. Note that $x(b)$ should satisfies the system $-\Phi^*(\Phi x-b)\in \partial g(x)$ when $b$ is close to $\bar b$, which means 
\begin{equation}\label{S3}
    \left\{\begin{array}{ll} -(2x_1+x_2-x_3-b_1-b_2)=\frac{x_1}{\sqrt{x_1^2+x_2^2}}\\
    -(x_1+x_2-b_1)=\frac{x_2}{\sqrt{x_1^2+x_2^2}}\\
    (x_1-x_3-b_2)\in \partial|x_3|
    \end{array}\right.
\end{equation}
As $(x_1-x_3-b_2)$ is close to $1$, as $b$ is around $\bar b$ and $x(b)$ is around $\ox$, the third inclusion  yields $x_3\ge 0$. If $x_3>0$, we have $x_1-x_3-b_2=1$. It follows from the first two equations of \eqref{S3} that 
\[
1=x_1-x_3-b_2=\frac{x_2-x_1}{\sqrt{x_1^2+x_2^2}},
\]
which yields $x_1=0$, $x_2=1$, and that $x_3=-b_2-1> 0$. Hence, if $b_2< -1$, $x(b)=(0,1,-b_2-1)^T$ is the unique solution of system \eqref{S3} by Theorem~\ref{Lip12}. If $b_2\ge -1$, the above arguments  tell us that $x_3=0$. It follows that $x_3(b)=\max\{-b_2-1,0\}$, which is not differentiable at $\bar b$. \hfill$\triangle$  }
 \end{Example}

\begin{Example}[The case of indicator function of the positive semidefinite cone $\mathbb{S}^n_+$] {\rm When $g=\delta_{\mathbb{S}^n_+}$ as in Example~\ref{SDP} and  $(\OX,\OY)\in \gph N_{\mathbb{S}^n_+}$ with the decomposition in \eqref{SD}, we have 
\[
(N_{\mathbb{S}^n_+})^{-1}(\OY)=N_{\mathbb{S}^n_-}(\OY)=\Bar P\begin{pmatrix}\mathbb{S}^{n-|\eta|}_+ &0\\0&0_\eta\end{pmatrix}\Bar P^T. 
\]
It follows that 
\begin{equation*}
    T_{(N_{\mathbb{S}^n_+})^{-1}(\OY)}(\OX)=\left\{\Bar P\begin{pmatrix}A&B&0\\B^T&C&0\\0&0&0\end{pmatrix}\Bar P^T|\; A\in \mathbb{S}^{|\al|}, B\in \R^{|\al|\times |\beta|}, C\in \mathbb{S}^{|\beta|}_+\right\}\subset \Bar P\begin{pmatrix}\mathbb{S}^{n-|\eta|} &0\\0&0_\eta\end{pmatrix}\Bar P^T.
\end{equation*}
The set $\mathfrak{T}_{\mathcal{M}}$ in \eqref{T} for this case of $g$ is easily computed by 
\begin{equation*}\label{TSDP}
  \mathfrak{T}_{\mathcal{M}_\mathbb{S}}\eqdef \Limsup_{(X,Y)\stackrel[Y\in \mathcal{M}_\mathbb{S}]{{\rm gph}\,N_{\mathbb{S}^n_+}}{\longrightarrow}(\OX,\OY)}T_{(N_{\mathbb{S}^n_+})^{-1}(Y)}(X)= \Bar P\begin{pmatrix}\mathbb{S}^{n-|\gg|} &0\\0&0_\eta\end{pmatrix}\Bar P^T. 
\end{equation*}
We skip the detail here. \hfill$\triangle$ 


}
\end{Example}

\begin{Remark}[Perturbation on the linear operator $\Phi$]\label{PPhi}{\rm The linear operator $\Phi$  can be also considered a parameter in problem in \eqref{QP}. Let $\mathscr{L}(\XX,\YY)$ be the space of all linear operators from $\XX$ to $\YY$. The solution set in \eqref{S} with respect to the triple $(\Phi,b,\mu)\in \mathscr{L}(\XX,\YY)\times \R^m\times\R_{+}$ is defined by 
\begin{eqnarray}\label{SPb}
    S(\Phi,b,\mu)={\rm argmin}\left\{\frac{1}{2\mu}\|\Phi x-b\|^2+g(x)|\; x\in \XX\right\}=\left\{x\in \XX|\; 0\in \frac{1}{\mu}\Phi^*(\Phi x-b)+\partial g(x)\right\}.
\end{eqnarray}
It is similar to prove that this solution mapping  is single-valued and Lipschitz continuous around some known triple $(\Bar \Phi,\bar b,\mu) \in \mathscr{L}(\XX,\YY)\times \R^m\times\R_{++}$ provided that the 
\[
\Ker \Bar\Phi \cap \mathfrak{T}_\mathcal{M}=\{0\}\quad \mbox{ with}\quad \oy= -\frac{1}{\bar \mu}\Bar\Phi^*(\Bar\Phi \ox-\bar b)
\]
in the settings of Corollary~\ref{LipS12}. When $g(\cdot)=\|\cdot\|_{1,2}$, the Lipschitz stability of this mapping is equivalent to the condition \eqref{LipS}
\[
\Ker \Bar\Phi\cap\mathcal{V}=\{0\}\quad \mbox{ with} \quad \oy= -\frac{1}{\bar \mu}\Bar\Phi^*(\Bar\Phi \ox-\bar b).
\]
}    
\end{Remark}

\section{Geometric characterizations of Lipschitz stability for nuclear norm minimization problems}
\setcounter{equation}{0}
In this section, we consider the following nuclear norm minimization problem
\begin{equation}\label{P1}
\min_{X\in \R^{n_1\times n_2}}\qquad f(X)+\|X\|_*,
\end{equation}
where $f:\R^{n_1\times n_2}\to \R$ is a twice continuously differentiable convex function and $g(X)\eqdef\|X\|_*$ is the nuclear norm of $X\in \R^{n_1\times n_2}$, which is the sum of its singular values $\sigma_1(X), \ldots, \sigma_{n_1}(X)$ with nonincreasing order. For any $\OX\in \R^{n_1\times n_2}$ ($n_1\le n_2$), the singular valued decomposition (SVD) of $\OX$ is written by
\[
\OX=U\begin{pmatrix}\Sigma &0\\0&0\end{pmatrix}_{n_1\times n_2}V^T,
\]
where $U\in \R^{n_1\times n_1}$ and $V\in \R^{n_2\times n_2}$ are orthogonal matrices, $\Sigma={\rm Diag}\, \{\sigma_1,\sigma_2, \ldots,\sigma_r\}$ is the $r\times r$ diagonal matrix with $\sigma_1\ge \sigma_2\ldots\ge \sigma_r>0$ being all positive singular values of $\OX$, and $r$ is the rank of $\OX$. The set $\mathcal{O}(\OX)$ contains all such pairs $(U,V)$ for $\OX$.

The following lemma provides some well-known formulas for subdifferential of the nuclear norm; see, e.g., \cite[Example~1 and Example~2]{W92} and \cite[Proposition 10]{ZS17}. Recall that $\|X\|$ is the spectral norm of $X\in \R^{n_1\times n_2}$, which is the largest singular value of $X$. 

\begin{Lemma}[Subdifferential of the nuclear norm]\label{Lem} The subdifferential to nuclear norm at $\OX\in \R^{n_1\times n_2}$ is computed by
\begin{equation}\label{subdif}
    \partial\|\OX\|_*=\left\{U\begin{pmatrix} \Id_r&0\\ 0 & W\end{pmatrix}V^T|\; \|W\|\le 1\right\} \quad \mbox{for any}\quad (U,V)\in \mathcal{O}(\OX). 
\end{equation}
Moreover, $\OY\in \partial\|\OX\|_*$ if and only if $\|\OY\|\le 1$ and 
\begin{equation}\label{Fen}
    \|\OX\|_*=\la \OY,\OX\ra. 
\end{equation}
Furthermore, for any $\OY\in\B\eqdef\{Z\in \R^{n_1\times n_2}|\;\|Z\|\le 1\}$, we have 
\begin{equation}\label{Inver}
\partial g^*(\OY)= N_\B(\OY)=\OU\begin{pmatrix}\mathbb{S}_+^{p(\OY)} &0\\0 &0\end{pmatrix}\OV^T\quad \mbox{for any}\quad (\OU,\OV)\in \mathcal{O}(\OY),
\end{equation}
where $p(\OY)$ is defined by
\begin{equation}\label{p}
p(\OY)\eqdef\#\{i\in \{1, \ldots,n_1\}|\; \sigma_i(\OY)=1\}.
\end{equation}
\end{Lemma}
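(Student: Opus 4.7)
The plan is to prove the three claims sequentially, leveraging the fact that the nuclear and spectral norms are dual on $\R^{n_1\times n_2}$, so that $g^{*}=\delta_{\B}$.

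For \eqref{subdif}, I would invoke the standard characterization of the subdifferential of a norm: $\OY\in\partial\|\OX\|_*$ iff $\|\OY\|\le 1$ and $\la\OY,\OX\ra=\|\OX\|_*$. Fixing $(U,V)\in\mathcal{O}(\OX)$ and writing $\OY=U\begin{pmatrix}M_{11}&M_{12}\\M_{21}&M_{22}\end{pmatrix}V^T$ in blocks conforming to those of $\OX$, one computes $\la\OY,\OX\ra=\sum_{i=1}^{r}\sigma_i(M_{11})_{ii}$. Since $\|M_{11}\|\le\|\OY\|\le 1$ forces $|(M_{11})_{ii}|\le 1$, the equality $\sum_i\sigma_i(M_{11})_{ii}=\sum_i\sigma_i$ together with $\sigma_i>0$ for $1\le i\le r$ forces $(M_{11})_{ii}=1$ for each $i$. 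A column-norm argument in a matrix of spectral norm at most $1$ with unit diagonal (each column has $\ell_2$-norm $\le 1$ and a $1$ already in its diagonal position) then yields $M_{11}=\Id_r$, and the same norm constraint applied to $M$ forces $M_{12}=0$ and $M_{21}=0$, leaving $M_{22}=W$ free with $\|W\|\le 1$. Identity \eqref{Fen} then follows immediately from the Fenchel--Young identity \eqref{FY} combined with $g^{*}=\delta_{\B}$.

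For \eqref{Inver}, start from $\partial g^{*}(\OY)=(\partial g)^{-1}(\OY)=N_{\B}(\OY)$, which holds because $g^{*}=\delta_{\B}$ and $g^{**}=g$. Fix $(\OU,\OV)\in\mathcal{O}(\OY)$. Von Neumann's trace inequality combined with $\sigma_i(\OY)\le 1$ yields $\la\OY,\OX\ra\le\sum_i\sigma_i(\OY)\sigma_i(\OX)\le\|\OX\|_*$, so by \eqref{Fen} the membership $\OX\in\partial g^{*}(\OY)$ is equivalent to saturation of both inequalities: $\sigma_i(\OX)=0$ for all $i>p(\OY)$, and $\OX,\OY$ admit a simultaneously ordered SVD. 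The latter means there exists $(\tilde U,\tilde V)\in\mathcal{O}(\OY)$ with $\OX=\tilde U\,\mathrm{diag}(\sigma(\OX))\,\tilde V^T$; since the first $p(\OY)$ singular values of $\OY$ all equal $1$, the first $p(\OY)$ columns of $\tilde U$ and $\tilde V$ coincide with those of $\OU$ and $\OV$ each post-multiplied by the same orthogonal rotation $Q\in\R^{p(\OY)\times p(\OY)}$. Substituting and using that $\sigma_i(\OX)=0$ for $i>p(\OY)$ gives $\OX=\OU\begin{pmatrix}QDQ^T&0\\0&0\end{pmatrix}\OV^T$ with $D=\mathrm{diag}(\sigma_1(\OX),\ldots,\sigma_{p(\OY)}(\OX))$. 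As $Q\in\mathrm{O}(p(\OY))$ and $D\succeq 0$ vary, $QDQ^T$ traces out exactly $\mathbb{S}_+^{p(\OY)}$ via orthogonal diagonalization, yielding \eqref{Inver}.

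The main obstacle is the equality analysis in the derivation of \eqref{Inver}: identifying the set of admissible $\OX$ requires both the equality conditions of von Neumann's trace inequality and a careful accounting of the redundancy in the SVD of $\OY$. The key point is that the multiplicity of the singular value $1$ in $\OY$ contributes a full orthogonal group $\mathrm{O}(p(\OY))$ of symmetries, and this invariance is exactly what upgrades a basis-dependent nonnegative diagonal description of $\OX$ into the full PSD cone $\mathbb{S}_+^{p(\OY)}$; without this observation one would only recover diagonal nonnegative matrices. The remaining computations are direct applications of Fenchel duality and block-matrix algebra.
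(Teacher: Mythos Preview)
Your proposal is correct. The paper does not actually supply a proof of this lemma; it records the three formulas as well-known and cites Watson \cite{W92} (Examples~1 and~2) and Zhou--So \cite{ZS17} (Proposition~10). So there is nothing in the paper to compare against, and your self-contained argument---Fenchel--Young to get \eqref{Fen}, the block decomposition with the column/row $\ell_2$-norm rigidity to pin down $M_{11}=\Id_r$, $M_{12}=0$, $M_{21}=0$ for \eqref{subdif}, and von Neumann's trace inequality with its equality case plus the $\mathrm{O}(p(\OY))$ redundancy in the SVD of $\OY$ for \eqref{Inver}---is a valid route that the cited references essentially follow as well.

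Two small points worth tightening. First, for \eqref{subdif} you argue only the inclusion $\partial\|\OX\|_*\subset\{\cdots\}$; the reverse inclusion is immediate from \eqref{Fen} (any $U\begin{pmatrix}\Id_r&0\\0&W\end{pmatrix}V^T$ with $\|W\|\le 1$ has spectral norm $\max(1,\|W\|)=1$ and pairs with $\OX$ to give $\Tr\Sigma=\|\OX\|_*$), but it should be stated. Second, in the SVD-redundancy step for \eqref{Inver} you assert that the first $p(\OY)$ columns of $\tilde U,\tilde V$ differ from those of $\OU,\OV$ by the \emph{same} orthogonal $Q$; this is true because the corresponding singular value block equals $\Id_{p(\OY)}$, forcing the left and right rotations to coincide there, but since this is precisely the observation you flag as ``the key point,'' a one-line justification (e.g.\ from $P\,\Id_{p(\OY)}=\Id_{p(\OY)}Q$ in $P\Sigma_{\OY}=\Sigma_{\OY}Q$) would make the argument airtight.
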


When $\OY\in \partial \|\OX\|_*$, $\OX$ and $\OY$ have the {\em simultaneous} ordered singular value decomposition in the sense that there exists  $(\OU,\OV)\in \mathcal{O}(\OX)\cap\mathcal{O}(\OY)$ such that
\begin{equation}\label{SVD}
\OX=\OU\begin{pmatrix}\Sigma &0\\0&0\end{pmatrix}\OV^T\qquad \mbox{and}\qquad \OY=\OU\begin{pmatrix}\Id_p&0&0\\0&\Lm&0\end{pmatrix}\OV^T,
\end{equation}
where $p=p(\OY)$ and $\Lm\in \R^{(n_1-p)\times (n_1-p)}$ is the diagonal matrix containing all singular values of $\OY$ in $[0,1)$; 
see, e.g., \cite{FNP23} for the way of constructing the pair $(\OU,\OV)\in \mathcal{O}(\OX)$. In the following result, we show that the nuclear norm satisfies the quadratic growth condition for  some nontrivial set $\mathcal{M}$. Local quadratic growth condition of the nuclear norm was established in \cite{ZS17} without providing the exact formula for the modulus. Our approach here is completely different via convex analysis. It reveals not only the modulus but also the {\em global}  quadratic growth condition of the nuclear norm.

\begin{Lemma}
    [Quadratic growth condition for  a set of the nuclear norm]\label{QG} Let $(\OX,\OY)\in \gph \partial \|\cdot\|_*$. Then we have
\begin{eqnarray}\label{Gr1}
    \|X\|_*-\|\OX\|_*-\la \Bar Y,X-\OX\ra\ge \frac{1-\gg^2}{2\|X\|_*(1+(1+\gg)^2)}\left[{\rm dist}\,(X; (\partial\|\cdot\|_*)^{-1}(\Bar Y))\right]^2
\end{eqnarray}
for any  $X\in \R^{n_1\times n_2}\setminus\{0\}$,
where $\gg$ is the {\em subdominant singular value} defined by
\begin{equation}\label{subd}
    \gg=\gg(\OY)\eqdef\max\{\sigma(\Bar Y)\setminus\{1\},0\}<1.
\end{equation}
Consequently, we have 
\begin{equation}\label{GrI}
 \|X\|_*-\|\OX\|_*-\la \Bar Y,X-\OX\ra\ge \frac{1-\gg}{5\|X\|_*}\left[{\rm dist}\,(X; (\partial\|\cdot\|_*)^{-1}(\Bar Y))\right]^2\quad \mbox{for all}\quad X\in \R^{n_1\times n_2}\setminus\{0\}.
\end{equation}
Finally, the nuclear norm satisfies the quadratic growth condition around $(\OX,\OY)$ for  the following set 
\begin{equation}\label{M}
\mathcal{M}_*\eqdef\{Y\in \R^{n_1\times n_2}|\; \sigma_1(Y)=\ldots=\sigma_p(Y)=1\},
\end{equation}
where $p=p(\OY)$ is the number of singular values of $\OY$ that are equal to $1$ defined in \eqref{p}. 
\end{Lemma}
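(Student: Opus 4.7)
The plan is to prove the master inequality \eqref{Gr1} first; the weakened bound \eqref{GrI} follows by an elementary algebraic estimate, and the uniform quadratic growth condition on $\mathcal{M}_*$ is then a consequence of continuity of singular values. For \eqref{Gr1}, my first step is to invoke the simultaneous SVD \eqref{SVD} together with unitary invariance of both $\|\cdot\|_*$ and the Frobenius norm to reduce to the canonical case $\bar U = \Id_{n_1}$, $\bar V = \Id_{n_2}$, so that $\bar Y$ takes the block form $\bigl(\begin{smallmatrix} \Id_p & 0 & 0 \\ 0 & \Lambda & 0 \end{smallmatrix}\bigr)$ with $\|\Lambda\| \le \gamma$. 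The Fenchel--Young identity \eqref{Fen} together with $\|\bar Y\| \le 1$ then yields $\|\bar X\|_* = \langle \bar Y, \bar X\rangle$, collapsing the left-hand side of \eqref{Gr1} to $\|X\|_* - \langle \bar Y, X\rangle \ge 0$. Using \eqref{Inver}, the target set $(\partial \|\cdot\|_*)^{-1}(\bar Y)$ is the convex cone of matrices whose only nonzero block is a positive-semidefinite top-left $p \times p$ block; blocking $X$ as $\bigl(\begin{smallmatrix} A & B \\ C & D \end{smallmatrix}\bigr)$ with $A \in \R^{p \times p}$, the Frobenius-metric projection $X_0$ has only nonzero block $\Pi_{\mathbb{S}_+^p}((A+A^T)/2) =: P_A$. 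Setting $\Delta := X - X_0$, we have $\|\Delta\|_F = \mathrm{dist}(X; (\partial \|\cdot\|_*)^{-1}(\bar Y))$, and since $(\partial \|\cdot\|_*)^{-1}(\bar Y)$ is a convex cone containing $X_0$, optimality of the projection gives $\langle \Delta, X_0 \rangle = 0$, so $\langle \Delta, X \rangle = \|\Delta\|_F^2$.

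The heart of the proof is a dual-certificate construction. For $\tau > 0$ define $Z_\tau := \bar Y + \tau \Delta/\|\Delta\|_F$ and use $\|X\|_* \ge \langle Z_\tau/\|Z_\tau\|, X \rangle$ together with the orthogonality above to obtain
\[
\|X\|_* \cdot \|Z_\tau\| \;\ge\; \langle \bar Y, X\rangle + \tau \|\Delta\|_F.
\]
The main technical step, and the source of the precise constants in \eqref{Gr1}, is the sharp upper bound
\[
\|Z_\tau\|^2 \;\le\; 1 + \tau^2 \cdot \frac{1 + (1+\gamma)^2}{1 - \gamma^2}.
\]
The linear-in-$\tau$ term in $\lambda_{\max}(Z_\tau^T Z_\tau)$ vanishes because the first-order Rayleigh perturbation on the top eigenspace (the first $p$ coordinates of $\R^{n_2}$) is governed by the symmetric part of $\Delta_{11} = A - P_A$, which equals $-N_A \preceq 0$ by the spectral decomposition $(A+A^T)/2 = P_A - N_A$ with $P_A, N_A \succeq 0$ and $P_A N_A = 0$. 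The quadratic coefficient comes from second-order Rayleigh--Schr\"odinger perturbation, using the spectral gap $1 - \gamma^2$ between the degenerate eigenvalue $1$ of $\bar Y^T \bar Y$ and the rest of its spectrum (bounded by $\gamma^2$), together with the block-norm bound $\|H_{12} + H_{21}^T \Lambda\| \le 1 + \gamma$ for $H = \Delta/\|\Delta\|_F$ (which satisfies $\|H\|_F = 1$ and hence $\|H_{ij}\| \le 1$). Writing $\|Z_\tau\| \le \sqrt{1 + \tau^2 C} \le 1 + \tau^2 C/2$ with $C = (1+(1+\gamma)^2)/(1-\gamma^2)$, and optimizing the resulting inequality $\|X\|_* - \langle \bar Y, X\rangle \ge \tau\|\Delta\|_F - \tau^2 C \|X\|_*/2$ over $\tau$, produces exactly \eqref{Gr1}.

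For \eqref{GrI}, observe that $t \mapsto t/(1+t^2)$ is decreasing on $[1,2]$, so $(1+\gamma)/[1+(1+\gamma)^2] \ge 2/5$ on $\gamma \in [0,1]$; hence $(1-\gamma^2)/[2(1+(1+\gamma)^2)] = (1-\gamma)(1+\gamma)/[2(1+(1+\gamma)^2)] \ge (1-\gamma)/5$.

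Finally, for the quadratic growth of $\|\cdot\|_*$ around $(\bar X, \bar Y)$ for $\mathcal{M}_*$: set $\gamma = \gamma(\bar Y) < 1$. For any $(X, Y) \in \gph \partial \|\cdot\|_* \cap \B_\eta(\bar X, \bar Y)$ with $Y \in \mathcal{M}_*$ and $\eta$ sufficiently small, continuity of singular values forces $p(Y) = p(\bar Y)$ and $\gamma(Y) \le (1+\gamma)/2 < 1$. Applying \eqref{GrI} at $(X, Y)$ in place of $(\bar X, \bar Y)$, and restricting to $U \in \B_\nu(X)$ with $\nu$ small enough that $\|U\|_* \le \|\bar X\|_* + 1$, gives the quadratic growth at $X$ for $Y$ with modulus bounded below by $(1-\gamma)/[5(\|\bar X\|_* + 1)] > 0$, uniform in the neighborhood, as required by Definition~\ref{QGCR}.
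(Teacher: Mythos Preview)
Your overall strategy is sound and the reductions are correct: the unitary-invariance reduction, the identification of the projection $X_0$ with the Moreau orthogonality $\langle\Delta,X_0\rangle=0$, the dual-certificate inequality $\|X\|_*\|Z_\tau\|\ge\langle\bar Y,X\rangle+\tau\|\Delta\|_F$, the optimization in $\tau$, the algebraic passage from \eqref{Gr1} to \eqref{GrI}, and the final uniformity argument on $\mathcal{M}_*$ are all fine. In fact your key spectral claim
\[
\|\bar Y+\tau H\|^2\;\le\;1+\tau^2\,\frac{1+(1+\gamma)^2}{1-\gamma^2}
\qquad\mbox{whenever}\qquad \mbox{sym}(H_{11})\preceq 0,\ \|H\|_F=1,
\]
is \emph{exactly} the conjugate inequality $h^*\le k^*$ that the paper isolates and proves. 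So the two routes converge on the same technical heart; the paper reaches it via the biconjugate theorem and explicit computation of both conjugates, while you reach it via a dual-certificate optimization.

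The gap is in how you justify this spectral bound. Second-order Rayleigh--Schr\"odinger perturbation theory furnishes an \emph{asymptotic expansion} of $\lambda_{\max}(Z_\tau^TZ_\tau)$ as $\tau\downarrow 0$, not a global upper bound valid for every $\tau>0$; and you need the bound at $\tau^*=\|\Delta\|_F/(C\|X\|_*)$, which is of order up to $1/C\le 1/2$, not infinitesimal. Two concrete issues: (a) the first-order correction on the top eigenspace is $\preceq0$, not zero, so the word ``vanishes'' is misleading and the second-order formula you quote already presumes more than you have; (b) even if the eigenvalues emanating from the degenerate level $1$ obey your estimate, eigenvalues starting from the lower block (at most $\gamma^2$) can grow \emph{linearly} in $\tau$ and you give no argument preventing a crossing above $1+C\tau^2$. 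Your sketch also omits the direct $\tau^2H^TH$ contribution to the second-order term. The paper closes all of this at once by a Schur-complement calculation yielding the operator inequality $(Y+\bar\Sigma)^T(Y+\bar\Sigma)\preceq\bigl(1+C\|Y\|^2\bigr)\Id_{n_2}$ for every $Y$ with $\mbox{sym}(Y_{11})\preceq0$. Your argument becomes complete if you replace the perturbative hand-wave by this (or an equivalent) non-asymptotic estimate.
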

\begin{proof} Suppose that $U\Bar \Sigma V^T$ is the SVD of $\Bar Y$ with $\Bar \Sigma=\begin{pmatrix}\Id_p&0\\0&\Sigma\end{pmatrix}_{n_1\times n_2}$,  $(U,V)\in \mathcal{O}(\OY)$, and   $\|\Sigma\|<1$, where $p$ is from \eqref{p}. 
Pick any $X\in \R^{n_1\times n_2}\setminus\{0\}$ and define $\Hat X=\|X\|_*^{-1}X$. As $(\partial \|\cdot\|_*)^{-1}(\Bar Y)$ is a convex cone by \eqref{Inver} and $\|\OX\|_*=\la \OY,\OX\ra$ by \eqref{Fen}, inequality \eqref{Gr1} is equivalent to
\begin{equation}\label{Gr3}
\|\Hat X\|_*-\la \Bar Y,\Hat X\ra\ge \frac{c}{2}\left[{\rm dist}\,(\Hat X;(\partial \|\cdot\|_*)^{-1}(\Bar Y))\right]^2\quad \mbox{with}\quad c\eqdef \frac{(1-\gg^2)}{1+(1+\gg)^2}.
\end{equation}
Define $\Tilde X=U^T\Hat X V$ and note that 
\[
\|\Hat X\|_*-\la \Bar Y,\Hat X\ra=\|\Tilde X\|_*-{\rm Tr}(V\Bar \Sigma^T U^T \Hat X)=\|\Tilde X\|_*-{\rm Tr}(\Bar \Sigma^T U^T \Hat  XV)=\|\Tilde X\|_*-\la \Bar \Sigma,\Tilde X\ra. 
\]
Moreover, it follows from \eqref{Inver} that 
\[
{\rm dist}\, (\Hat X;(\partial \|\cdot\|_*)^{-1}(\Bar Y))={\rm dist}\,\left(\Tilde X;\begin{pmatrix}\mathbb{S}^p_+&0\\0&0\end{pmatrix}\right).
\]
Thus, to prove \eqref{Gr1}, we only need to show that 
\begin{equation}\label{Gr2}
    h(X)\eqdef\|X\|_*-\la \Bar \Sigma,X\ra+\delta_{B^*}(X)\ge k(X)\eqdef\frac{c}{2} \left[{\rm dist}\left(X; \begin{pmatrix}{\mathbb S}^p_+&0\\0&0\end{pmatrix}\right)\right]^2\quad \mbox{for}\quad X\in \R^{n_1\times n_2},
\end{equation}
where $B_*$ is the unit ball with the nuclear norm, i.e., ${\mathbb B}_*\eqdef\{X\in \R^{n_1\times n_2}|\; \|X\|_*\le 1\}$. By the Biconjugate Theorem of convex analysis, inequality \eqref{Gr2} is equivalent to 
\begin{equation}\label{Fe2}
h^*(Y)\le k^*(Y)\quad \mbox{for all}\quad Y\in \R^{n_1\times n_2}.
\end{equation}
Note that 
\begin{eqnarray}\label{Y1}\begin{array}{ll}
h^*(Y)&=\disp\sup_{X\in {\mathbb B}_*}\la Y+\Bar\Sigma,X\ra-\|X\|_*
=\max\{\|Y+\Bar\Sigma\|- 1,0\}=\disp(\|Y+\Bar\Sigma\|- 1)_+.
\end{array}
\end{eqnarray}
By writing $Y=\begin{pmatrix}A&B\\C&D\end{pmatrix}\in \R^{n_1\times n_2}$ as a block matrix with $A\in \R^{p\times p}$, we have
\begin{eqnarray}\label{Y2}\begin{array}{ll}
k^*(Y)&=\disp\sup_{X\in \R^{n_1\times n_2}}\sup_{Z\in {\mathbb S}^p_+}\la Y,X\ra-\frac{c}{2}\left\|X-\begin{pmatrix}Z&0\\0&0\end{pmatrix}\right\|_F^2\\
&=\disp\sup_{Z\in {\mathbb S}^p_+}\sup_{X\in \R^{m\times n}} \left\la A,Z\right\ra+\la Y, X-\begin{pmatrix}Z&0\\0&0\end{pmatrix}\ra-\frac{c}{2}\left\|X-\begin{pmatrix}Z&0\\0&0\end{pmatrix}\right\|^2_F\\
&=\disp\sup_{Z\in {\mathbb S}^p_+}\la A,Z\ra+\dfrac{1}{2c}\|Y\|^2_F\\
&=\disp\sup_{Z\in {\mathbb S}^p_+}\frac{1}{2}\la A+A^T,Z\ra+\dfrac{1}{2c}\|Y\|^2_F\\
&=\disp\delta_{{\mathbb S}^p_-}(A+A^T)+\dfrac{1}{2c}\|Y\|^2_F, 
\end{array}\end{eqnarray}
where the last equality holds due to $A+A^T\in \mathbb{S}^p$. By \eqref{Y1} and \eqref{Y2}, we only need to verify \eqref{Fe2} when $A+A^T\in {\mathbb S}^p_-$. Recall that for a symmetric matrix $Z\in \R^{n_2\times n_2}$, we write  $Z\succeq0$ if $Z\in \mathbb{S}^{n_2}_+$ and $Z\preceq0$ if $-Z\succeq0$. Note from the representation $\Bar\Sigma=\begin{pmatrix}\Id_p&0\\0&\Sigma\end{pmatrix}$ that 
\begin{eqnarray}\label{Eq1}\begin{array}{ll}
(Y+\Bar \Sigma)^T(Y+\Bar \Sigma)&=Y^TY+\Bar\Sigma^TY+Y^T\Bar\Sigma+\Bar\Sigma^T\Bar\Sigma\\
&=Y^TY+\begin{pmatrix}A+A^T&B+C^T\Sigma\\B^T+\Sigma^TC&\Sigma^TD+D^T\Sigma\end{pmatrix}+\begin{pmatrix}\Id_p&0\\0&\Sigma^T\Sigma\end{pmatrix}\\
&\preceq(\|Y\|^2+1)\Id_{n_2}+\begin{pmatrix}0&B+\Sigma^TC \\B^T+C^T\Sigma&\Sigma^TD+D^T\Sigma+\Sigma^T\Sigma-\Id_{n_2-p} \end{pmatrix}.
\end{array}
\end{eqnarray}
Next we claim that 
\begin{equation}\label{Eq2}
\begin{pmatrix}0&B+\Sigma^TC \\B^T+C^T\Sigma&\Sigma^TD+D^T\Sigma+\Sigma^T\Sigma-\Id_{n_2-p} \end{pmatrix}\preceq d\|Y\|^2\Id_{n_2}\quad \mbox{with}\quad d\eqdef\dfrac{\gg^2+(1+\gg)^2}{(1-\gg^2)},
\end{equation}
which is equivalent to 
\[
\begin{pmatrix}d\|Y\|^2\Id_p&-B-\Sigma^TC \\-B^T-C^T\Sigma&(d\|Y\|^2+1)\Id_{n_2-p}-\Sigma^TD-D^T\Sigma-\Sigma^T\Sigma \end{pmatrix}\succeq 0.
\]
If $\|Y\|=0$, the latter is trivial as $\Id_{n_2-p}\succeq\Sigma^T\Sigma$ and $D=0$. If $\|Y\|>0$, by Schur complement, the above inequality means \begin{equation}\label{Eq3}
(d\|Y\|^2+1)\Id_{n_2-p}-\Sigma^TD-D^T\Sigma-\Sigma^T\Sigma-\frac{1}{d\|Y\|^2}(B^T+C^T\Sigma)(B+\Sigma^TC)\succeq 0. 
\end{equation}
Note that  $\gg=\|\Sigma\|$ and   $\max\{\|B\|,\|C\|,\|D\|\}\le\|Y\|$. As $-\Sigma^TD-D^T\Sigma$ is symmetric, we have 
\[
-\Sigma^TD-D^T\Sigma\succeq -\|\Sigma^TD+D^T\Sigma\|\Id_{n_2-p}\succeq -2\|\Sigma^TD\|\Id_{n_2-p} \ge -2\|\Sigma^T\|\cdot\|D\|
\Id_{n_2-p}\succeq-2\gg\|Y\|\Id_{n_2-p}. \]
Similarly, 
\[
-(B^T+C^T\Sigma)(B+\Sigma^TC)\succeq -\|B^T+C^T\Sigma\|^2\Id_{n_2-p}\succeq-(\|B\|+\|C\|\cdot\|\Sigma\|)^2\Id_{n_2-p}\succeq-(1+\gg)^2\|Y\|^2\Id_{n_2-p}. 
\]
These two inequalities imply that
the left-hand side of \eqref{Eq3} is  ``$\succeq$'' than 
\[\begin{array}{ll}
 \left[(d\|Y\|^2+1)
-2\gg\|Y\|-\gg^2-\frac{(1+\gg)^2}{d}\right]\Id_{n_2-p}&=\left[d\|Y\|^2-2\gg\|Y\|+\left(1-\gg^2-\frac{(1+\gg)^2}{d}\right)\right]\Id_{n_2-p}\\
&\succeq 2\left[\sqrt{d\left(1-\gg^2-\frac{(1+\gg)^2}{d}\right)}-\gg)\right]\|Y\|\Id_{n_2-p}=0,
\end{array}
\]
where the last equality holds due to the choice of $d=\dfrac{\gg^2+(1+\gg)^2}{(1-\gg^2)}$. This verifies \eqref{Eq3} and thus \eqref{Eq2}. It follows from \eqref{Eq1} and \eqref{Eq2} that 
\[
(Y+\Bar \Sigma)^T(Y+\Bar \Sigma)\preceq ((1+d)\|Y\|^2+1)\Id_{n_2},
\]
which means $\|Y+\Bar \Sigma\|\le \sqrt{(1+d)\|Y\|^2+1}$. Hence we have
\[
\|Y+\Bar \Sigma\|-1\le \dfrac{(1+d)\|Y\|^2}{\sqrt{(1+d)\|Y\|^2+1}+1}\le \dfrac{(1+d)\|Y\|^2}{2}=\dfrac{(1+(1+\gg)^2)}{2(1-\gg^2)}\|Y\|_F^2.
\]
This together with \eqref{Y1} and \eqref{Y2} verifies \eqref{Fe2}, \eqref{Gr2}, and thus \eqref{Gr1}. 

To justify \eqref{GrI}, we notice that 
\[
\frac{1-\gg^2}{2(1+(1+\gg)^2)}=\frac{1-\gg}{2}\cdot\frac{1+\gg}{1+(1+\gg)^2}\ge \frac{1-\gg}{2}\cdot \frac{2}{5}=\frac{1-\gg}{5},
\]
as $\gg\in [0,1)$. Combining this  with \eqref{Gr1} gives us \eqref{GrI}.

Finally, we prove the quadratic growth condition of $\|\cdot\|_*$ around $(\OX,\OY)$ for  the set $\mathcal{M}_*$ defined in \eqref{M}.  Pick any $\ve>0$ with $\ve<1-\gg$ and take any $(X,Y)\in \gph \partial \|\cdot\|_*\cap \B_\ve(\OX,\OY)$ with $Y\in \mathcal{M}_*$. It is well-known that $\sigma_{p+1}(\cdot)$ is Lipschitz continuous with modulus $1$ (Weyl's inequality). We have 
\[
\sigma_{p+1}(Y)\le \sigma_{p+1}(\OY)+\|Y-\OY\|_F\le\gg+\ve<1,
\]
which tells us that $p(Y)=p(\OY)$. For any $U\in \B_\ve(X)\subset \B_{2\ve}(\OX)$,  it follows from \eqref{GrI} that 
\begin{equation*}\begin{array}{ll}
 \|U\|_*-\|X\|-\la  Y,U-X\ra&\disp\ge \frac{1-\gg(Y)}{5\|U\|_*}\left[{\rm dist}\,(U; (\partial\|\cdot\|_*)^{-1}( Y))\right]^2\\
 &\disp\ge \frac{1-\gg-\ve}{5(\|\OX\|_*+2\ve)}\left[{\rm dist}\,(U; (\partial\|\cdot\|_*)^{-1}( Y))\right]^2.
 \end{array}
\end{equation*}
As $\ve>0$ can be chosen sufficiently small, $\|\cdot\|_*$ satisfies the quadratic growth condition around $(\OX,\OY)$ for  $\mathcal{M}_*$ with some constant modulus .  
\end{proof}

\begin{Remark}[Improving the quadratic growth condition \eqref{GrI}]{\rm The subdominant singular value $\gg$ is indeed the second largest singular value of $\OY$ when $0<p<n_1$. If $p=0$, $\gg=\|\OY\|$ and if $p=n_1$, $\gg=0$. In \eqref{GrI}, the term $\frac{1-\gg}{5}$ is not sharp; see, e.g., \eqref{Gr1} for a better constant. It is interesting to question the best bound for this inequality. By looking back the quadratic growth condition \eqref{QG5}  for $\ell_1/\ell_2$ norm, we conjecture that \eqref{GrI} can be sharpened to 
\begin{equation*}
 \|X\|_*-\|\OX\|_*-\la \Bar Y,X-\OX\ra\ge \frac{1-\gg}{2\|X\|_*}\left[{\rm dist}\,(X; (\partial\|\cdot\|_*)^{-1}(\Bar Y))\right]^2\quad \mbox{for all}\quad X\in \R^{n_1\times n_2}\setminus\{0\}.
\end{equation*}
At the moment, we only have affirmative answers for $X\in \R^{2\times 2}$ and some special cases of $\OY$ in higher dimensions.  \hfill$\triangle$}
\end{Remark}

To apply Theorem~\ref{GeoTilt}, we need to show that the subdifferential mapping $\partial \|\cdot\|_*$ has relative approximations  by $\mathcal{M}_*$.

\begin{Proposition}[Relative approximations of subgradients of the nuclear norm]\label{RelA*} Let $\OY\in \partial \|\OX\|_*$. Then the subdifferential mapping $\partial \|\cdot\|_*$ has relative approximations  by the set $\mathcal{M}_*$ defined in \eqref{M}. 
\end{Proposition}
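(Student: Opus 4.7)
The plan is to explicitly construct the decomposition $Y=\lambda \hat Y+(1-\lambda)\tilde Y$ by diagonal manipulation inside a simultaneous SVD of $X$ and $Y$. First, fix $\varepsilon\in(0,1-\gg)$ and take any $(X,Y)\in\gph\partial\|\cdot\|_*\cap\B_\varepsilon(\OX,\OY)$. Let $r=\rank(X)$. By Lemma~\ref{Lem} there exists $(U,V)\in\mathcal{O}(X)$ such that $Y=U\begin{pmatrix}\Id_r&0\\0&W\end{pmatrix}V^T$ with $\|W\|\le 1$. Diagonalizing $W=P\Sigma_W Q^T$ and absorbing $(P,Q)$ into $(U,V)$, I may assume $W=\Sigma_W$ is a (nonnegative, not necessarily ordered) diagonal block whose diagonal entries, together with the $r$ ones coming from $\Id_r$, enumerate the singular values of $Y$.

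Next, by Weyl's inequality, $|\sigma_i(Y)-\sigma_i(\OY)|\le\|Y-\OY\|_F\le\varepsilon$ for each $i$. Since $\sigma_i(\OY)=1$ for $i\le p$ and $\sigma_{p+1}(\OY)=\gg<1-\varepsilon$, this forces $\sigma_p(Y)\ge 1-\varepsilon$ and $\sigma_{p+1}(Y)<1$, so $p(Y)\le p$. If $p(Y)=p$, then $Y\in\mathcal{M}_*$ and I set $\lambda=1$, $\hat Y=\tilde Y=Y$. Assume now $p(Y)<p$. Then exactly $t\eqdef p-p(Y)$ of the diagonal entries of $W$ lie strictly in $[1-\varepsilon,1)$ (namely the largest $t$ entries among those that are $<1$, corresponding in the ordering of $Y$ to $\sigma_{p(Y)+1}(Y),\ldots,\sigma_{p}(Y)$). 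Denote these entries by $s_1,\ldots,s_t\in[1-\varepsilon,1)$, with $s_t=\sigma_p(Y)$ being the smallest.

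I set $\lambda\eqdef\tfrac{1+\sigma_p(Y)}{2}=\tfrac{1+s_t}{2}\in[1-\tfrac{\varepsilon}{2},1)$, construct $\hat W$ from $W$ by replacing each $s_j$ by $1$, and define
\[
\hat Y=U\begin{pmatrix}\Id_r&0\\0&\hat W\end{pmatrix}V^T,\qquad \tilde Y=\frac{Y-\lambda\hat Y}{1-\lambda}=U\begin{pmatrix}\Id_r&0\\0&\tilde W\end{pmatrix}V^T,
\]
where $\tilde W$ is diagonal with the same entries as $W$ except that each $s_j$ is replaced by $\tilde s_j\eqdef\tfrac{s_j-\lambda}{1-\lambda}$. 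Clearly $\|\hat W\|\le 1$, so $\hat Y\in\partial\|X\|_*$ by Lemma~\ref{Lem}, and $\hat Y$ has exactly $r+t+p(Y)-r=p$ singular values equal to $1$, so $\hat Y\in\mathcal{M}_*$. For $\tilde Y$, the untouched entries of $\tilde W$ agree with those of $W$ and thus have modulus $\le 1$; for the modified entries, $|\tilde s_j|\le 1$ is equivalent to $s_j\ge 2\lambda-1=s_t$, which holds since $s_t$ is the minimum. Hence $\|\tilde W\|\le 1$ and $\tilde Y\in\partial\|X\|_*$ by Lemma~\ref{Lem} again.

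The main subtleties I expect to navigate are purely bookkeeping: (a) justifying that a common $(U,V)$ may be chosen so that $W$ is diagonal and the singular-value orderings of $X,Y,\hat Y,\tilde Y$ line up with the $(r,p,p_{\rm rest})$ block decomposition, and (b) connecting the ordered singular values of $Y$ to the (unordered) diagonal of $W$ so as to identify $s_t$ with $\sigma_p(Y)$. Once these are in place, the bound $\lambda\ge 1-\varepsilon/2>1-\varepsilon$ follows from Weyl, the identity $\lambda\hat Y+(1-\lambda)\tilde Y=Y$ is immediate by construction, and the definition of relative approximation in Definition~\ref{RelA} is verified.
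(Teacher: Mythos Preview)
Your proof is correct and follows essentially the same route as the paper: use a simultaneous SVD of $X$ and $Y$, lift the subdominant singular values $\sigma_{p(Y)+1}(Y),\ldots,\sigma_p(Y)$ to $1$ to obtain $\hat Y\in\partial\|X\|_*\cap\mathcal{M}_*$, and solve for $\tilde Y$ from the convex combination. The only difference is cosmetic: the paper takes $\lambda=\sigma_p(Y)$ (so that the modified entries of $\tilde\Lambda$ become $\frac{\sigma_j-\sigma_p}{1-\sigma_p}\in[0,1)$ and stay nonnegative), whereas you take $\lambda=\tfrac{1+\sigma_p(Y)}{2}$, which pushes $\tilde s_t$ down to $-1$ but still keeps $\|\tilde W\|\le 1$. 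Either choice works; the paper's is marginally cleaner because it avoids negative diagonal entries in $\tilde W$, and your claim that \emph{exactly} $t$ entries of $W$ lie in $[1-\varepsilon,1)$ need not hold for all $\varepsilon<1-\gg$, but this is irrelevant since you only ever use the $t$ largest ones.
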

\begin{proof} Pick any $(X,Y)\in \gph \partial \|\cdot\|_*\cap \B_\ve(\OX,\OY)$ with $\ve>0$.  Since $\|Y\|\le 1$ and $\sigma_k(Y)$ are Lipschitz continuous for any $k$, we have $
{\rm rank}\, (\OX)\le {\rm rank}\, (X)\le p(Y)\le p(\OY)$
when $\ve$ is sufficiently small, where the $p(Y)$ counting how many singular values of $Y$ are equal to $1$; see \eqref{p}. 
If $p(Y)= p(\OY)$, i.e., $Y\in \mathcal{M}_*$, a relative approximation of $Y$  by $\mathcal{M}_*$ is $Y$. If $ q=p(Y)< p(\OY)$, suppose that a SVD of $Y$ is 
\[
Y=U\begin{pmatrix}\Id_q&0&0\\0&\Sigma&0\end{pmatrix} V^T,
\]
where $\Sigma={\rm diag}(\sigma_{q+1}, \ldots, \sigma_{n_1})$ with $\sigma_1\ge \ldots\ge\sigma_p\ge \ldots \sigma_{n_1}$ being all singular values of $Y$ and $(U,V)\in \mathcal{O}(X)\cap\mathcal{O}(Y)$. Let us set 
\[
\Hat Y=U\begin{pmatrix}\Id_p&0&0\\0&\Hat \Lm&0\end{pmatrix} V^T\qquad \mbox{and}\qquad \Tilde Y=U\begin{pmatrix}\Id_q&0&0\\0&\Tilde\Lm&0\end{pmatrix} V^T,
\]
where $\Hat \Lm$ and $\Tilde\Lm$ are two diagonal matrices defined by
\[
\Hat \Lm={\rm Diag}\, (\sigma_{p+1}, \ldots, \sigma_{n_1})\qquad \mbox{and}\qquad \Tilde \Lm=\Diag\left(\frac{\sigma_{q+1}-\sigma_{p}}{1-\sigma_{p}}, \ldots,\frac{\sigma_{p-1}-\sigma_{p}}{1-\sigma_{q}},0, \sigma_{p+1}, \ldots, \sigma_{n_1}\right).
\]
As $r\le q$, observe that both $\Hat Y, \Tilde Y\in \partial \|X\|$ by Lemma~\ref{Lem}. Moreover, $\Hat Y\in \mathcal{M}_*$ and $
Y=\sigma_p \Hat Y+(1-\sigma_p)\Tilde Y$. 
As $\sigma_p(\cdot)$ is Lipschitz continuous with modulus $1$, we have
\[
1-\sigma_p=|1-\sigma_p|=|\sigma_p(\OY)-\sigma_p(Y)|\le \|\OY-Y\|_F\le \ve,
\]
which tells us that $\sigma_p\in (1-\ve,1)$.
Hence $\Hat Y$ is a relative approximation of $Y$  by $\mathcal{M}_*$. Thus  $\partial \|\cdot\|_*$ has relative approximations  by the set $\mathcal{M}_*$.\end{proof}

We recall here the computation of the tangent cone $T_{(\partial \|\cdot\|_*)^{-1}(\OY)}(\OX)$ in the recent paper \cite[Corollary~4.2]{FNP23}.

\begin{Lemma}[Tangent cone] Suppose that $\OX,\OY$ have representations as in \eqref{SVD}. We have
\begin{equation}\label{Tang}
T_{(\partial \|\cdot\|_*)^{-1}(\OY)}(\OX)=\left\{\OU\begin{pmatrix} A&B&0\\B^T&C&0\\0&0&0\end{pmatrix}\OV^T|\; A\in \mathbb{S}^r, B\in \R^{r\times(p-r)}, C\in \mathbb{S}^{(p-r)\times(p-r)}_+ \right\}, 
\end{equation}
where $r={\rm rank}\,(\OX)$ and $p=p(\OY)$ defined in \eqref{p}. This formula does not depend on the choice of $(\OU,\OV)\in \mathcal{O}(\OX)\cap\mathcal{O}(\OY)$.
\end{Lemma}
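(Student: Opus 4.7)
The plan is to reduce the computation to the classical tangent-cone formula for the positive semidefinite cone at a rank-deficient point, after an isometric change of coordinates. First I would invoke formula \eqref{Inver} from Lemma~\ref{Lem} to rewrite
$$(\partial\|\cdot\|_*)^{-1}(\OY)=\OU\begin{pmatrix}\mathbb{S}^p_+&0\\0&0\end{pmatrix}\OV^T,$$
where $p=p(\OY)$ and the blocks follow the row/column partitions $(p,n_1-p)$ and $(p,n_2-p)$. The simultaneous SVD \eqref{SVD} identifies $\OX$ with the positive semidefinite block $M_0=\begin{pmatrix}\Sigma_r&0\\0&0\end{pmatrix}\in\mathbb{S}^p_+$ of rank $r=\rank(\OX)$, where $\Sigma_r\succ 0$ is the $r\times r$ diagonal of positive singular values of $\OX$.

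Next I would exploit that the linear map $\Psi:\mathbb{S}^p\to\R^{n_1\times n_2}$ defined by $\Psi(M)=\OU\begin{pmatrix}M&0\\0&0\end{pmatrix}\OV^T$ is an isometric embedding (relative to the Frobenius norm), since $\OU$ and $\OV$ are orthogonal. Being a linear homeomorphism onto its range, $\Psi$ commutes with the contingent cone, which yields
$$T_{(\partial\|\cdot\|_*)^{-1}(\OY)}(\OX)=T_{\Psi(\mathbb{S}^p_+)}(\Psi(M_0))=\Psi\bigl(T_{\mathbb{S}^p_+}(M_0)\bigr).$$
Hence it suffices to prove that $T_{\mathbb{S}^p_+}(M_0)$ consists precisely of the block matrices $\begin{pmatrix}A&B\\B^T&C\end{pmatrix}$ with $A\in\mathbb{S}^r$, $B\in\R^{r\times(p-r)}$, and $C\in\mathbb{S}^{p-r}_+$; pushing this description through $\Psi$ reproduces \eqref{Tang} verbatim.

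For the PSD tangent-cone formula, the inclusion ``$\subseteq$'' is easy: any tangent direction $H=\begin{pmatrix}A&B\\B^T&C\end{pmatrix}$ arises as a limit of some $H_k$ with $M_0+t_kH_k\succeq 0$ along $t_k\dn 0$, and evaluating positivity on vectors supported in the last $p-r$ coordinates (where $M_0$ vanishes) and passing to the limit forces $C\succeq 0$. The reverse inclusion is the main technical step: given such $H$ with $C\succeq 0$, one produces $t_k\dn 0$ and $H_k\to H$ with $M_0+t_kH_k\succeq 0$ via a Schur-complement argument. Since $\Sigma_r+t_kA\succ 0$ for small $t_k$, positive semidefiniteness of $M_0+t_kH_k$ is equivalent to $C_k-t_kB_k^T(\Sigma_r+t_kA)^{-1}B_k\succeq 0$, and choosing $H_k$ to differ from $H$ only by a vanishing perturbation $\varepsilon_k\Id_{p-r}$ in the $C$-block, with $t_k/\varepsilon_k\to 0$, absorbs the possible rank-deficiency of $C$.

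The main obstacle is this Schur-complement perturbation in the last step: the naive choice $H_k=H$ fails when $C$ has zero eigenvalues, so the $\varepsilon_k$-correction must be inserted delicately to ensure simultaneously that $H_k\to H$ and $M_0+t_kH_k\succeq 0$. Once this is handled, combining the three steps above produces \eqref{Tang}.
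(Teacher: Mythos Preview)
Your argument is correct. The reduction via the linear isometry $\Psi(M)=\OU\begin{pmatrix}M&0\\0&0\end{pmatrix}\OV^T$ to the well-known formula for $T_{\mathbb{S}^p_+}(M_0)$ is sound: $\Psi$ is injective because $\OU,\OV$ are orthogonal, its image is a linear subspace containing $(\partial\|\cdot\|_*)^{-1}(\OY)$, and contingent cones are preserved under linear isomorphisms onto their range. Your Schur-complement construction for the inclusion ``$\supseteq$'' is also fine; with, say, $t_k=1/k^2$ and $\varepsilon_k=1/k$ one has $\|t_kB^T(\Sigma_r+t_kA)^{-1}B\|\le Kt_k\le\varepsilon_k$ for large $k$, so $C+\varepsilon_k\Id_{p-r}-t_kB^T(\Sigma_r+t_kA)^{-1}B\succeq C\succeq 0$ as needed.

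There is, however, nothing to compare against: the paper does not prove this lemma. It is stated as a recall of \cite[Corollary~4.2]{FNP23} and used without proof. Your proposal therefore supplies a self-contained argument that the paper outsources; the route you take---identifying $(\partial\|\cdot\|_*)^{-1}(\OY)$ with an embedded copy of $\mathbb{S}^p_+$ via \eqref{Inver} and then invoking the standard tangent-cone description for the PSD cone at a boundary point---is the natural one and presumably close in spirit to what is done in \cite{FNP23}.
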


We are ready to establish the main result in this section, which gives a complete characterization of tilt stability of problem \eqref{Gr1}.

\begin{Theorem}\label{GeoTilt*} Suppose that $\OX$ is an optimal solution of problem \eqref{P1}, i.e., $\OY=-\nabla f(\OX)\in \partial\|\OX\|_*$. Hence $\OX$ is a tilt-stable solution of problem \eqref{P1} if and only if the Strong Sufficient Condition holds:
\begin{equation}\label{SSC}
\Ker \nabla^2 f(\OX)\cap \OU\begin{pmatrix}\mathbb{S}^p&0\\0&0\end{pmatrix}\OV^T=\{0\} \qquad\mbox{with}\qquad p=p(\OY),
\end{equation}
where   $(\OU,\OV)\in \mathcal{O}(\OX)\cap \mathcal{O}(\OY)$ are from the  simultaneous ordered SVD of $\OX$ and $\OY$ in \eqref{SVD}.
\end{Theorem}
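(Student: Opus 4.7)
The strategy is to invoke Theorem~\ref{GeoTilt} with $g=\|\cdot\|_*$ and $\mathcal{M}=\mathcal{M}_*$ from \eqref{M}. Lemma~\ref{QG} yields the quadratic growth condition around $(\OX,\OY)$ for $\mathcal{M}_*$, and Proposition~\ref{RelA*} yields the relative approximation property of $\partial\|\cdot\|_*$ onto $\mathcal{M}_*$. Hence $\OX$ is a tilt-stable solution of \eqref{P1} if and only if $\Ker\nabla^2 f(\OX)\cap\mathfrak{T}=\{0\}$, where $\mathfrak{T}$ is the Painlev\'e--Kuratowski outer limit in \eqref{T}. The entire argument therefore reduces to establishing the identity
\begin{equation*}
\mathfrak{T}=\OU\begin{pmatrix}\mathbb{S}^p&0\\0&0\end{pmatrix}\OV^T,
\end{equation*}
whose two inclusions I would handle separately.

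For ``$\supset$'', the plan is to approach $(\OX,\OY)$ along sequences that \emph{raise} the rank of the $X$-component up to $p=p(\OY)$. Concretely, setting
\begin{equation*}
X_k=\OU\begin{pmatrix}\Sigma&0&0\\0&k^{-1}\Id_{p-r}&0\\0&0&0\end{pmatrix}\OV^T\quad\text{and}\quad Y_k=\OY
\end{equation*}
produces, for large $k$, a simultaneous SVD of $(X_k,Y_k)$ with the same unitaries $(\OU,\OV)$, with $Y_k\in\mathcal{M}_*$ and $Y_k\in\partial\|X_k\|_*$ by Lemma~\ref{Lem}. Since $\rank X_k=p=p(Y_k)$, the tangent cone formula \eqref{Tang} collapses (the $B$- and $C$-blocks have dimension zero), giving
\begin{equation*}
T_{(\partial\|\cdot\|_*)^{-1}(Y_k)}(X_k)=\OU\begin{pmatrix}\mathbb{S}^p&0\\0&0\end{pmatrix}\OV^T
\end{equation*}
for every large $k$. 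Each element of the target set then lies in $\mathfrak{T}$ via the corresponding constant sequence $w_k$.

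For ``$\subset$'', I would take any $w\in\mathfrak{T}$ with witnessing sequences $(X_k,Y_k)\to(\OX,\OY)$, $Y_k\in\mathcal{M}_*$, and $w_k\to w$ with $w_k\in T_{(\partial\|\cdot\|_*)^{-1}(Y_k)}(X_k)$. The proof of Lemma~\ref{QG} gives $p(Y_k)=p$ for large $k$, and $r\le r_k:=\rank X_k\le p$ since $X_k\in N_\B(Y_k)$. After passing to a subsequence, $r_k$ is constant and, by compactness of the orthogonal groups, the associated simultaneous SVD pairs $(U_k,V_k)\in\mathcal{O}(X_k)\cap\mathcal{O}(Y_k)$ converge to some $(U^*,V^*)$, which lies in $\mathcal{O}(\OX)\cap\mathcal{O}(\OY)$ by continuity of singular values and subspaces. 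Writing $w_k=U_kM_kV_k^T$ with $M_k=\begin{pmatrix}A_k&B_k&0\\B_k^T&C_k&0\\0&0&0\end{pmatrix}$ as in \eqref{Tang}, boundedness of $w_k$ forces boundedness of $M_k$, so along a further subsequence $M_k\to M^*$ of the same block form, and $w=U^*M^*(V^*)^T$.

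The hard part I expect is the final identification: one must show that $U^*\begin{pmatrix}\mathbb{S}^p&0\\0&0\end{pmatrix}(V^*)^T$ coincides with $\OU\begin{pmatrix}\mathbb{S}^p&0\\0&0\end{pmatrix}\OV^T$, i.e., that this set is independent of the choice in $\mathcal{O}(\OX)\cap\mathcal{O}(\OY)$. For this I would note that any two such pairs are related by $(U',V')=(UQ,VR)$ with $Q,R$ orthogonal, and the joint constraints that both give simultaneous SVDs of $\OX$ (with $r\times r$ block $\Sigma$) and of $\OY$ (with $p\times p$ block $\Id_p$) force $Q$ and $R$ to be block-diagonal and to share a common top $p\times p$ block $Q_p=R_p$, the latter precisely because $Q_p^T\Id_p R_p=\Id_p$. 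The congruence $H\mapsto Q_p H Q_p^T$ is then a bijection of $\mathbb{S}^p$, so the target set is invariant; combined with Theorem~\ref{GeoTilt} this yields the characterization \eqref{SSC}.
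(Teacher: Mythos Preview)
Your proposal is correct and follows essentially the same route as the paper: reduce to Theorem~\ref{GeoTilt} via Lemma~\ref{QG} and Proposition~\ref{RelA*}, prove ``$\supset$'' with the rank-raising sequence $X_k=\OU\,{\rm diag}(\Sigma,k^{-1}\Id_{p-r},0)\,\OV^T$, and prove ``$\subset$'' by extracting convergent simultaneous SVD pairs $(U_k,V_k)\to(U,V)\in\mathcal{O}(\OX)\cap\mathcal{O}(\OY)$. The only cosmetic difference is the final invariance step: where you analyze the SVD ambiguity directly to show $Q_p=R_p$, the paper instead observes that $U\begin{pmatrix}\mathbb{S}^p&0\\0&0\end{pmatrix}V^T$ equals the intrinsic set $T_{(\partial\|\cdot\|_*)^{-1}(\OY)}(\OX)-T_{(\partial\|\cdot\|_*)^{-1}(\OY)}(\OX)$, which is automatically independent of the choice in $\mathcal{O}(\OX)\cap\mathcal{O}(\OY)$.
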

\begin{proof} According Theorem~\ref{QG}, Proposition~\ref{RelA*}, and Theorem~\ref{GeoTilt}, $\OX$ is a tilt-stable solution if and only if condition \eqref{Geo} is satisfied. We claim that 
\begin{equation}\label{LimTang}    \mathfrak{T}_{\mathcal{M}_*}=\Limsup_{(X,Y)\stackrel[Y\in \mathcal{M}_*]{{\rm gph}\,\partial \|\cdot\|_*}{\longrightarrow}(\OX,\OY)} T_{(\partial \|\cdot\|_*)^{-1}(Y)}(X)=\OU\begin{pmatrix}\mathbb{S}^p&0\\0&0\end{pmatrix}\OV^T.  
\end{equation}
Let us verify the ``$\subset$" part first. Pick any $W$ in the left-hand side set. We find sequences $(X_k,Y_k)\in \gph (\partial \|\cdot\|_*)$ and $W_k\in T_{(\partial \|\cdot\|_*)^{-1}(Y_k)}(X_k)$ such that $Y_k\in \mathcal{M}_*$, $(X_k,Y_k)\to (\OX,\OY)$, and $W_k\to W$. As $Y_k\in \partial \|X_k\|_*$, $X_k$ and $Y_k$ have simultaneous ordered SVD, i.e., there exists $(U_k,V_k)\in \mathcal{O}(X_k)\cap \mathcal{O}(Y_k)$ such that  the SVD of $X_k$ and $Y_k$ are \begin{equation}\label{XY}    
X_k=U_k\begin{pmatrix}\Sigma_k&0\\0&0\end{pmatrix}V_k^T\quad \mbox{and}\quad Y_k=U_k\begin{pmatrix}\Id_{p}&0&0\\0&\Lm_k&0\end{pmatrix}V_k^T.
\end{equation}
By passing to subsequences, we may suppose that $(U_k,V_k)\to (U,V)\in \mathcal{O}(\OX)\cap \mathcal{O}(\OY)$, $\Sigma_k\to \Sigma$, and $\Lm_k\to \Lm$. Hence, we have 
\begin{equation}\label{XY2}    
\OX=U\begin{pmatrix}\Sigma&0\\0&0\end{pmatrix}V^T\quad \mbox{and}\quad \OY=U\begin{pmatrix}\Id_{p}&0&0\\0&\Lm&0\end{pmatrix}V^T,
\end{equation}
which tells us that $(U,V)\in \mathcal{O}(\OX)\cap\mathcal{O}(\OY)$. 
As $Y_k\to \OY$ and $p=p(\OY)$,  $\|\Lm\|<1$. By \eqref{Tang}, we have
$
W_k\in U_k\begin{pmatrix}\mathbb{S}^p&0\\0&0\end{pmatrix}V_k^T,
$
which  implies that 
$
W\in U\begin{pmatrix}\mathbb{S}^p&0\\0&0\end{pmatrix}V^T.
$  It follows that 
\begin{equation}\label{eq:TMS}
\mathfrak{T}_{\mathcal{M}_*}\subset \left\{ U\begin{pmatrix}\mathbb{S}^p&0\\0&0\end{pmatrix} V^T|\,  (U, V)\in \mathcal{O}(\Bar X)\cap\mathcal{O}(\Bar Y)\right\}. 
\end{equation}

Recall here that  the formula of the tangent cone $T_{(\partial \|\cdot\|_*)^{-1}(\OY)}(\OX)$  in \eqref{Tang} does not depend on the choice of any $(\Bar U, \Bar V)\in \mathcal{O}(\OX)\cap\mathcal{O}(\OY)$.
As $(U,V)\in \mathcal{O}(\OX)\cap\mathcal{O}(\OY)$, we obtain from \eqref{Tang} that 
\[
T_{(\partial \|\cdot\|_*)^{-1}(\OY)}(\OX)=\left\{U\begin{pmatrix} A&B&0\\B^T&C&0\\0&0&0\end{pmatrix}V^T|\; A\in \mathbb{S}^r, B\in \R^{r\times(p-r)}, C\in \mathbb{S}^{(p-r)\times(p-r)}_+ \right\}.
\]
This gives us that 
\[
\span \left[T_{(\partial \|\cdot\|_*)^{-1}(\OY)}(\OX)\right]=T_{(\partial \|\cdot\|_*)^{-1}(\OY)}(\OX)-T_{(\partial \|\cdot\|_*)^{-1}(\OY)}(\OX)=U\begin{pmatrix}\mathbb{S}^p&0\\0&0\end{pmatrix}V^T
\]
due to the fact that 
\[
\span \mathbb{S}^{(p-r)\times(p-r)}_+=\mathbb{S}^{(p-r)\times(p-r)}_+-\mathbb{S}^{(p-r)\times(p-r)}_+=\mathbb{S}^{(p-r)\times(p-r)}.
\]
Similarly, we also have 
\[
\span \left[T_{(\partial \|\cdot\|_*)^{-1}(\OY)}(\OX)\right]=\OU\begin{pmatrix}\mathbb{S}^p&0\\0&0\end{pmatrix}\OV^T.
\]
It follows that 
\[
U\begin{pmatrix}\mathbb{S}^p&0\\0&0\end{pmatrix}V^T=\OU\begin{pmatrix}\mathbb{S}^p&0\\0&0\end{pmatrix}\OV^T.
\]
This together with \eqref{eq:TMS} verifies the inclusion ``$\subset$'' in \eqref{LimTang}.

To justify the opposite inclusion in \eqref{LimTang},  define 
\begin{equation*}
\OX_k=\OU\begin{pmatrix}\Sigma&0&0\\0&\frac{1}{k}\Id_{p-r}&0\\
0&0&0\end{pmatrix}\OV^T\quad \mbox{and}\quad \OY_k=\OY.
\end{equation*}
Observe that $(\OX_k,\OY_k)\to(\OX,\OY)$ and $\OY_k\in \partial \|\OX_k\|_*\cap\mathcal{M}_*$. Note further from \eqref{Tang} that 
\[
T_{\partial g^*(\OY_k)}(\OX_k)= \OU\begin{pmatrix}\mathbb{S}^p&0\\0&0\end{pmatrix}\OV^T. 
\]
It follows that 
\[
\OU\begin{pmatrix}\mathbb{S}^p&0\\0&0\end{pmatrix}\OV^T\subset \Limsup_{k\to\infty}T_{(\partial \|\cdot\|_*)^{-1}(\OY_k)}(\OX_k)\subset \mathfrak{T}_{\mathcal{M}_*},
\]
which ensures the inclusion ``$\supset$'' in \eqref{LimTang}. Thus equality \eqref{LimTang} holds. By Theorem~\ref{GeoTilt}, $\OX$ is tilt-stable if and only if  Strong Sufficient Condition \eqref{SSC} is satisfied.
\end{proof}

Strong Sufficient Condition~\eqref{SSC} was introduced in \cite[Corollary~4.2]{FNP23} as a sufficient condition for strong  solutions at $\OX$; see \cite[Remark~4.2]{FNP23} for further discussions of this condition and its  relations with some other conditions such as the Restricted Injectivity and Nondegeneracy Condition \cite{CP10,CR09,CR13} for the nuclear norm minimization problem \eqref{P1}. Similarly to Theorem~\ref{Lip12}, we show next that Strong Sufficient Condition \eqref{SSC} is equivalent to the single-valuedness and Lipschitz continuity of solution mapping $S$ in \eqref{S} for the corresponding  parametric nuclear norm minimization problem \eqref{NNM0}. 

\begin{Theorem}\label{LipNu} Consider the following perturbed optimization problem 
\begin{equation}\label{NNM}
{\mathcal P}(B,\mu)\qquad \min_{X\in \R^{n_1\times n_2}}\quad \frac{1}{2\mu}\|\Phi X-B\|^2+\|X\|_*,
\end{equation}
where $\Phi:\R^{n_1\times n_2}\to \R^m$ is a linear operator and the parameter pair$(B,\mu)\in \R^m\times \R_{++}$ is around a known pair  $(\Bar B,\bar \mu)\in \R^m\times \R_{++}$. The following are equivalent: 
\begin{itemize}
\item[{\bf (i)}]  Strong Sufficient Condition holds:
\begin{equation}\label{SSC2}
\Ker \Phi\cap \OU\begin{pmatrix}\mathbb{S}^p&0\\0&0\end{pmatrix}\OV^T=\{0\} \qquad\mbox{with}\qquad \OY=-\frac{1}{\bar \mu}\Phi^*(\Phi \OX-\Bar B)\quad \mbox{and}\quad p=p(\OY).
\end{equation}

\item[{\bf (ii)}] The corresponding solution mapping $S$ in \eqref{S}   is single-valued  and Lipschitz continuous around $(\Bar B,\bar\mu)$ with $S(\Bar B,\bar \mu)=\OX$.

\item[{\bf (iii)}] The  solution mapping $S$ in \eqref{S}   is single-valued  around $(\Bar B,\bar\mu)$ with $S(\Bar B,\bar \mu)=\OX$.
    \end{itemize}

\end{Theorem}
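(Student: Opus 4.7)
The plan is to follow the template of Theorem~\ref{Lip12}, deriving [(i)$\Rightarrow$(ii)] from the machinery already built, treating [(ii)$\Rightarrow$(iii)] as trivial, and doing the real work in [(iii)$\Rightarrow$(i)]. For [(i)$\Rightarrow$(ii)], I would apply Theorem~\ref{GeoTilt*} to the problem $\mathcal{P}(\bar B,\bar\mu)$: writing $f(X)=\tfrac{1}{2\bar\mu}\|\Phi X-\bar B\|^2$ we have $\nabla^2 f(\OX)=\tfrac{1}{\bar\mu}\Phi^*\Phi$, so $\Ker\nabla^2 f(\OX)=\Ker\Phi$, and condition~\eqref{SSC2} coincides with the tilt-stability condition~\eqref{SSC}. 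Combining this with Lemma~\ref{QG} (quadratic growth of $\|\cdot\|_*$ around $(\OX,\OY)$ for $\mathcal{M}_*$) and Proposition~\ref{RelA*} (relative approximations of $\partial\|\cdot\|_*$ onto $\mathcal{M}_*$), Corollary~\ref{LipS12} delivers the single-valued Lipschitz continuity of $S$ around $(\bar B,\bar\mu)$. The implication [(ii)$\Rightarrow$(iii)] is immediate.

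The heart of the argument is [(iii)$\Rightarrow$(i)], for which I mirror the construction used in the proof of Theorem~\ref{Lip12}. Using the simultaneous SVD~\eqref{SVD} of $(\OX,\OY)$, I would set
\[
X_k \eqdef \OU\begin{pmatrix}\Sigma & 0 & 0 \\ 0 & \tfrac{1}{k}\Id_{p-r} & 0 \\ 0 & 0 & 0\end{pmatrix}\OV^T \qquad \text{and}\qquad Y_k\eqdef \OY.
\]
For $k$ large, the nonzero singular values of $X_k$ are precisely $\sigma_1,\ldots,\sigma_r,\tfrac1k,\ldots,\tfrac1k$ in decreasing order, so $(\OU,\OV)\in\mathcal{O}(X_k)\cap\mathcal{O}(Y_k)$ gives a simultaneous SVD and by Lemma~\ref{Lem} we have $Y_k\in\partial\|X_k\|_*\cap\mathcal{M}_*$. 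Defining $B_k\eqdef\Phi(X_k-\OX)+\bar B$ yields $-\tfrac{1}{\bar\mu}\Phi^*(\Phi X_k-B_k)=-\tfrac{1}{\bar\mu}\Phi^*(\Phi\OX-\bar B)=\OY\in\partial\|X_k\|_*$, so $X_k\in S(B_k,\bar\mu)$, and by the assumed single-valuedness near $(\bar B,\bar\mu)$ the matrix $X_k$ is the unique minimizer of $\mathcal{P}(B_k,\bar\mu)$.

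The step I expect to be the main obstacle is upgrading this uniqueness to a \emph{strong} minimum, since for the Lasso/group Lasso analogue this was done via \cite[Theorem~5.12]{FNT21} and here an analogous result for the nuclear norm problem is required. Granting this (as the counterpart established for the nuclear norm in \cite{FNP23}), there exists $\ell_k>0$ with
\[
\ell_k\|W\|^2 \;\le\; \|\Phi W\|^2 + d^2\|\cdot\|_*(X_k\,|\,Y_k)(W)\qquad \text{for all }W\in\R^{n_1\times n_2},
\]
by \eqref{Stro}. Since Lemma~\ref{QG} ensures $\|\cdot\|_*$ satisfies the quadratic growth condition at $X_k$ for $Y_k$, Lemma~\ref{QGSS} shows $d^2\|\cdot\|_*(X_k|Y_k)(W)=0$ for $W\in T_{(\partial\|\cdot\|_*)^{-1}(Y_k)}(X_k)$, and therefore
\[
\Ker\Phi\cap T_{(\partial\|\cdot\|_*)^{-1}(Y_k)}(X_k)=\{0\}.
\]
Finally, since $X_k$ has rank $p$ and $p(Y_k)=p(\OY)=p$, the tangent cone formula~\eqref{Tang} at $(X_k,Y_k)$ collapses (the $C$-block lives in $\mathbb{S}_+^{0}$) to $\OU\begin{pmatrix}\mathbb{S}^p & 0 \\ 0 & 0\end{pmatrix}\OV^T$, and Strong Sufficient Condition~\eqref{SSC2} follows.
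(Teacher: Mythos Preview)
Your argument for [(i)$\Rightarrow$(ii)] and [(ii)$\Rightarrow$(iii)] matches the paper. For [(iii)$\Rightarrow$(i)], your construction of $X_k$, $Y_k$, and $B_k$ is identical to the paper's, but the route diverges at the step you correctly flag as the main obstacle: upgrading uniqueness of $X_k$ to a \emph{strong} minimum. You rely on a nuclear-norm analogue of \cite[Theorem~5.12]{FNT21}, appealing to \cite{FNP23}; this is not a result the present paper establishes or cites for that purpose, and without Nondegeneracy it is not clear such an implication holds in general. The paper avoids this detour entirely. Instead of passing through strong minima and the tangent cone, it argues directly from uniqueness via the \emph{radial cone}: if $W\in\Ker\Phi$ and $X_k+\lambda W\in(\partial\|\cdot\|_*)^{-1}(Y_k)$ for some $\lambda>0$, then $-\tfrac{1}{\bar\mu}\Phi^*(\Phi(X_k+\lambda W)-B_k)=Y_k\in\partial\|X_k+\lambda W\|_*$, so $X_k+\lambda W$ is also optimal and hence $W=0$. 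This yields
\[
\Ker\Phi\cap \R_+\big[(\partial\|\cdot\|_*)^{-1}(Y_k)-X_k\big]=\{0\},
\]
and since $\Sigma_k\in\mathrm{int}\,\mathbb{S}^p_+$ one has $\R_+[(\partial\|\cdot\|_*)^{-1}(Y_k)-X_k]=\OU\begin{pmatrix}\mathbb{S}^p&0\\0&0\end{pmatrix}\OV^T$, giving \eqref{SSC2} immediately.

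Both routes exploit the same key fact---that $X_k$ lies in the relative interior of the face $(\partial\|\cdot\|_*)^{-1}(\OY)$, making the relevant cone the full subspace---but the paper's radial-cone argument is entirely elementary and self-contained, whereas your route imports a nontrivial ``unique $\Rightarrow$ strong'' statement that, if available at all, would itself likely be proved by the very radial-cone computation the paper performs. Your final tangent-cone computation (with $r=p$ collapsing the $B$ and $C$ blocks) is correct and leads to the same subspace, so once the obstacle is cleared your argument does go through; but the paper's path removes the obstacle rather than clearing it.
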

\begin{proof} By Corollary~\ref{LipS12} and Theorem~\ref{GeoTilt*}, Strong Sufficient Condition \eqref{SSC2} implies the Lipschitz continuity of the solution mapping $S$.
 This clearly verifies the implication [{\bf(i)}$\Longrightarrow${\bf(ii)}]. Moreover,  [{\bf(ii)}$\Longrightarrow${\bf(iii)}] is trivial.

It remains to prove [{\bf(iii)}$\Longrightarrow${\bf(i)}]. Let us suppose that $S$ in \eqref{S}   is single-valued around $(\Bar B,\bar \mu)$ with $S(\Bar B,\bar \mu)=\OX$. By the SVD \eqref{SVD} for $\OX$ and $\OY$, we define 
\begin{equation*}
X_k=\OU\begin{pmatrix}\Sigma&0&0\\0&\frac{1}{k}\Id_{p-r}&0\\
0&0&0\end{pmatrix}\OV^T\qquad \mbox{and}\qquad Y_k=\OY\quad {\mbox with}\quad (\OU,\OV)\in \mathcal{O}(\OX)\cap\mathcal{O}(\OY).
\end{equation*}
It follows from Lemma~\ref{Lem} that $Y_k\in \partial\|X_k\|_*$ and $(X_k,Y_k)\to (\OX,\OY)$ as $k\to \infty$. Define further
\[
B_k=\frac{1}{\bar \mu} \Phi(X_k-\OX)+\Bar B\to \Bar B. 
\]
Note that 
\[
-\frac{1}{\bar \mu}\Phi^*(\Phi X_k-B_k)=-\frac{1}{\bar \mu}\Phi^*(\Phi \OX-\Bar B)=\OY\in \|X_k\|_*.
\]
Hence $X_k$ is the optimal solution of the following problem
\begin{equation}\label{NNM2}
\min_{X\in \R^{m\times n}}\quad \frac{1}{2\bar\mu}\|\Phi X-B_k\|^2+\|X\|_*,
\end{equation}
which means $X_k=S(B_k, \bar \mu)$ is the unique solution of problem $\mathcal{P}(B_k,\mu_k)$ in \eqref{NNM2}. We claim that 
\begin{equation}\label{Rad}
\Ker \Phi\cap \R_+ \big[(\partial \|\cdot\|_*)^{-1}(Y_k)-X_k\big]=\{0\}.
\end{equation}
Indeed, pick any $W\in  \Ker \Phi\cap \R_+\big[(\partial \|\cdot\|_*)^{-1}(Y_k)-X_k\big]$. There exists  $\lm>0$ such that 
\[
X_k+\lm W\in (\partial \|\cdot\|_*)^{-1}(Y_k),\; \mbox{i.e.},\; Y_k\in \partial\|X_k+\lm W\|_*. 
\] 
It follows that 
\[
-\frac{1}{\mu} \Phi^*(\Phi(X_k+\lm W)-B_k)=-\frac{1}{\mu} \Phi^*( \Phi(X_k-B_k))=Y_k\in \partial\|X_k+\lm W\|_*,
\]
which tells us that $X_k+\lm W$ is also an optimal solution of problem \eqref{NNM2}. As $X_k$ is the only solution of problem \eqref{NNM2}, we have $W=0$. This verifies condition \eqref{Rad}. 

According to formula \eqref{Inver}, we have 
\[
\R_+ \big[(\partial \|\cdot\|_*)^{-1}(Y_k)-X_k\big]=\R_+\left[\OU\begin{pmatrix}\mathbb{S}^p_+-\Sigma_k&0\\0&0\end{pmatrix}\OV^T\right]= \OU\begin{pmatrix}\mathbb{S}^p&0\\0&0\end{pmatrix}\OV^T \; \mbox{with}\;\Sigma_k=\begin{pmatrix}\Sigma&0\\0&\frac{1}{k}\Id_{p-r}\end{pmatrix},
\]
where the last equality holds due to $\Sigma_k\in {\rm int}\, \mathbb{S}^p_+$. Combining this with \eqref{Rad} tells us that Strong Sufficient Condition \eqref{SSC2} is satisfied. 
\end{proof}

 Although the statement of Theorem~\ref{LipNu} is quite similar to that of Theorem~\ref{Lip12}, it is still surprising to see  the Lipschitz continuity of $S$ follows from its single-valuedness. The proof of Theorem~\ref{LipNu} relies on  the necessery condition~\eqref{Rad} for  solution uniqueness of problem \eqref{NNM2} at $X_k$; see, e.g.,    \cite[Lemma~3.1]{LPB21} and \cite[Theorem~3.1]{FNP24} for the idea of using the radial cone to characterize solution uniqueness of convex optimization problems. 

Next we show that the solution mapping $S$ is single-valued and Lipschitz continuous around $(\Bar B,\bar\mu)$ with $S(\Bar B,\bar \mu)=\OX$ if and only if $S(\Bar B,\bar \mu)=\OX$ under the following well-known Nondegeneracy Condition  (a.k.a Strict Complementarity Condition) widely used for nuclear norm minimization problem \eqref{NNM} \cite{CR13,ZS17}
\begin{eqnarray}\label{ND}
    \OY\eqdef-\frac{1}{\bar \mu}\Phi^*(\Phi \OX-\Bar B)\in {\rm ri}\, \partial \|\OX\|_*=\left\{U\begin{pmatrix} \Id_r&0\\ 0 & W\end{pmatrix}V^T|\; \|W\|< 1\right\} \;\; \mbox{for any}\;\; (U,V)\in \mathcal{O}(\OX). 
\end{eqnarray}
Hence the Nondegeneracy Condition \eqref{ND} means that $p=p(\OY)={\rm rank}\, \OX=r.$ The aforementioned equivalence has some roots from
 the recent one  \cite[Corollary~3.10]{FNP24}, which shows the equivalence between tilt stability and solution uniqueness under Nondegeneracy Condition \eqref{ND}. Here we provide a direct proof.

\begin{Corollary}\label{UniLip} Suppose that the Nondegeneracy Condition \eqref{ND} is satisfied. Then the solution mapping $S$ is single-valued and Lipschitz continuous around $(\Bar B,\bar\mu)$ with $S(\Bar B,\bar \mu)=\OX$ if and only if $\OX$ is a unique optimal solution of problem ${\mathcal P}(\Bar B,\bar \mu)$, i.e., $\OX= S(\Bar B,\bar \mu)$. 
\end{Corollary}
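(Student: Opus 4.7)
\begin{pf} The direction in which single-valuedness and Lipschitz continuity of $S$ around $(\Bar B,\bar\mu)$ with $S(\Bar B,\bar\mu)=\OX$ implies that $\OX$ is the unique solution of $\mathcal{P}(\Bar B,\bar\mu)$ is immediate from the definition, so the only real content is the converse. The plan is to reduce the converse to the Strong Sufficient Condition~\eqref{SSC2} and then invoke Theorem~\ref{LipNu}.

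To this end, I would first recall the radial cone characterization of solution uniqueness that was already exploited inside the proof of Theorem~\ref{LipNu} and that is attributed to \cite[Lemma~3.1]{LPB21} and \cite[Theorem~3.1]{FNP24}: since $\OX$ is the \emph{unique} solution of $\mathcal{P}(\Bar B,\bar\mu)$ and $\OY=-\frac{1}{\bar\mu}\Phi^*(\Phi\OX-\Bar B)\in\partial\|\OX\|_*$, any $W\in\Ker\Phi$ with $\OX+tW\in(\partial\|\cdot\|_*)^{-1}(\OY)$ for some $t>0$ would make $\OX+tW$ a second solution, forcing $W=0$. Hence
\begin{equation*}
\Ker\Phi\cap\R_+\bigl[(\partial\|\cdot\|_*)^{-1}(\OY)-\OX\bigr]=\{0\}.
\end{equation*}

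Next I would compute the radial cone on the left under the Nondegeneracy Condition \eqref{ND}. By formula \eqref{Inver} one has
\begin{equation*}
(\partial\|\cdot\|_*)^{-1}(\OY)=\OU\begin{pmatrix}\mathbb{S}^p_+&0\\0&0\end{pmatrix}\OV^T,
\end{equation*}
while \eqref{ND} gives $p=p(\OY)=r=\mathrm{rank}\,\OX$. In the simultaneous SVD~\eqref{SVD} of $\OX$ and $\OY$, the block $\Sigma$ is an $r\times r$ diagonal matrix with strictly positive entries, hence $\Sigma\in\mathrm{int}\,\mathbb{S}^r_+$. Therefore the radial cone of $\mathbb{S}^r_+$ at $\Sigma$ is all of $\mathbb{S}^r$, and
\begin{equation*}
\R_+\bigl[(\partial\|\cdot\|_*)^{-1}(\OY)-\OX\bigr]=\OU\begin{pmatrix}\R_+[\mathbb{S}^r_+-\Sigma]&0\\0&0\end{pmatrix}\OV^T=\OU\begin{pmatrix}\mathbb{S}^r&0\\0&0\end{pmatrix}\OV^T.
\end{equation*}
Combining the two displays yields exactly Strong Sufficient Condition~\eqref{SSC2} with $p=r$. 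Applying Theorem~\ref{LipNu} then delivers single-valuedness and Lipschitz continuity of $S$ around $(\Bar B,\bar\mu)$ with $S(\Bar B,\bar\mu)=\OX$.

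The only delicate point in this plan is the rigorous citation of the radial cone necessary condition for uniqueness; everything else is a direct SVD computation using that $\Sigma$ lies in the interior of $\mathbb{S}^r_+$ precisely because of the strict complementarity encoded in \eqref{ND}. No twice-epi-differentiability or second-order computation on $\|\cdot\|_*$ is needed beyond what is already used in Theorem~\ref{LipNu}.
\end{pf}
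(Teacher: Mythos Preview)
Your proposal is correct and follows essentially the same route as the paper's own proof: both argue that uniqueness at $(\Bar B,\bar\mu)$ yields the radial cone condition $\Ker\Phi\cap\R_+\bigl[(\partial\|\cdot\|_*)^{-1}(\OY)-\OX\bigr]=\{0\}$, then use Nondegeneracy to get $p=r$ and $\Sigma\in\mathrm{int}\,\mathbb{S}^r_+$, whence the radial cone is the full block $\OU\begin{pmatrix}\mathbb{S}^p&0\\0&0\end{pmatrix}\OV^T$, and finally invoke Theorem~\ref{LipNu}. Your justification of the radial cone step (showing $\OX+tW$ would be a second solution) is in fact slightly more explicit than the paper, which simply says ``it is similar to~\eqref{Rad}''.
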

\begin{proof} We only need to prove that if the Nondegeneracy Condition \eqref{ND} is satisfied and  $\OX$ is an optimal solution of problem $P(\Bar B,\bar \mu)$, then $S$ is single-valued and Lipschitz continuous around $(\Bar B,\bar\mu)$. Indeed, if $\OX=S(\Bar B,\mu)$, it is similar to \eqref{Rad} that
\begin{equation}\label{Rad2}
\Ker \Phi\cap \R_+ \big[(\partial \|\cdot\|_*)^{-1}(\OY)-\OX\big]=\{0\}.
\end{equation}
As $\OY\in {\rm ri}\, \|\OX\|_*$, we have $r={\rm rank}\, \OX=p(\OY)=p$. It follows from \eqref{Inver} that  
\[
\R_+ \big[(\partial \|\cdot\|_*)^{-1}(\OY)-\OX\big]=\R_+\left[\OU\begin{pmatrix}\mathbb{S}^r &0\\0 &0\end{pmatrix}\OV^T
\right]=\OU\begin{pmatrix}\mathbb{S}^r &0\\0 &0\end{pmatrix}\OV^T
,
\]
where $(\OU,\OV)$ comes from the SVD of $\OX$ and $\OY$ in \eqref{SVD}. Condition \eqref{Rad2} is  exactly \eqref{SSC2}. By Theorem~\ref{LipNu}, the solution mapping $S$ is single-valued and Lipschitz continuous around $(\Bar B,\mu)$. 
\end{proof}

\begin{Remark}[Perturbations on $\Phi$]{\rm Similarly to Remark~\ref{PPhi}, when the operator $\Phi \in \mathscr{L}(\R^{n_1\times n_2},\R^m)$ is also a parameter around some $\Bar \Phi \in \mathscr{L}(\R^{n_1\times n_2},\R^m)$, the mapping $S(\Phi,B,\mu)$ defined in \eqref{SPb} is single-valued and Lipschitz continuous around $(\Bar\Phi,\Bar B,\bar \mu)$ for $\OX=S(\Bar\Phi,\Bar B,\bar \mu)$ if and only if 
\begin{equation*}
 \Ker \Bar\Phi\cap \OU\begin{pmatrix}\mathbb{S}^p&0\\0&0\end{pmatrix}\OV^T=\{0\} \;\;\mbox{with}\;\; \OY=-\frac{1}{\bar \mu}\Bar\Phi^*(\Bar\Phi\, \OX-\Bar B),\; p=p(\OY)\; \mbox{and }\; (\OU,\OV)\in \mathcal{O}(\OX)\cap\mathcal{O}(\OY).
\end{equation*}
Moreover, if the Nondegenerate Condition is satisfied in this format, i.e., $\OY\in {\rm ri}\, \partial \|\OX\|_*$,  the above Lipschitz stability of $S(\Phi,B,\mu)$ is also valid if and only if $\OX=S(\Bar\Phi,\Bar B,\bar \mu)$.  \hfill$\triangle$}
\end{Remark}
\section{Conclusion} 

In this paper we obtain necessary and sufficient conditions for Lipschitz stability of the solution mapping $S$ of parametric problem~\eqref{CP}. Along the way, full geometric characterizations of  tilt stability are also obtained for more general problem \eqref{CPS} without computing the generalized Hessian on $g$. For some particular important regularizers like the $\ell_1/\ell_2$ norm or the nuclear norm, we show that the Lipschitz stability is automatic whenever the solution mapping is single-valued around the point in question. Computing the Lipschitz modulus in this case is a reachable project for future research. As our analysis relies on the quadratic growth condition  of $g$ for the set $\mathcal{M}$ and  the relative approximations of  $\partial g$, it is natural to ask if   other important classes such as {\em twice epi-differentiable functions} \cite{RW98}, {\em spectral functions} \cite{CDZ17}, and {\em composite function} \cite{BS00} satisfy these two conditions under some circumstances. Moreover, the set $\mathcal{M}$ is only obtained case-by-case in this paper. Constructing $\mathcal{M}$, analyzing it,  and evaluating the set $\mathfrak{T}_\mathcal{M}$ in \eqref{T} for more general functions $g$ are what we want to proceed next.  Moreover, comparing the set $\mathfrak{T}_\mathcal{M}$ with the outer liming graphical derivative $D^\sharp \partial g^*(\oy, \ox)(0)$ \cite{GO22} as well as the generalized Hessian $D^*\partial g^*(\oy, \ox)(0)$ \cite{PR98,M92} is significant to connect different approaches in this direction for future research.

Another interesting  topic for ongoing research is about sensitivity analysis of the solution mapping $S$ under our Strong Sufficient Condition. Some impressive results about sensitivity analysis of parametric problem~\eqref{CP} on different classes of regularizers were  studied on \cite{BBH22,BLPS21,BPS22,VDFPD17}. Example~\ref{Non} shows that Strong Sufficient Condition does not guarantee the solution mapping $S$ to be differentiable, but whether $S$ is  directional differentiable or possesses the {\em conservative Jacobian} \cite{BLPS21, BPS22} is an open question, especially for group Lasso and nuclear norm minimization problems.

\vspace{0.2in}

\noindent{\bf Acknowledgments.} The author is deeply grateful to Tim Hoheisel and Defeng Sun for many fruitful discussions about this research topic. He is  indebted to Ebrahim Sarabi for helpful remarks on characterizing tilt stability via second subderivatives. The author is also thankful to both anonymous referees for their careful readings, constructive suggestions, as well as deep remarks on other works 
that helped improve the original presentation significantly.

\end{document}